\numberwithin{equation}{section}
\newtheorem{theo}{Theorem}
\newtheorem{coro}{Corollary}
\newtheorem{prop}{Proposition}
\newtheorem{lem}{Lemma}
\theoremstyle{remark}
\newtheorem{Remark}{Remark}
\newtheorem{Remarks}[Remark]{Remarks}
\def\al{\alpha}
\def\ep{\varepsilon}
\def\om{\omega}
\def\Th{\Theta}
\def\Om{\Omega}
\def\({\left(}
\def\){\right)}
\def\[{\left[}
\def\]{\right]}
\def\fl#1{\left\lfloor#1\right\rfloor}
\def\lcm{\operatorname{lcm}}
\begin{document}

\title[]{On the integrality of the Taylor coefficients of mirror maps, II}

\author[]{C. Krattenthaler$^\dagger$ and T. Rivoal}
\date{\today}

\address{C. Krattenthaler, Fakult\"at f\"ur Mathematik, Universit\"at Wien,
Nordbergstra{\ss}e~15, A-1090 Vienna, Austria.
WWW: \tt http://www.mat.univie.ac.at/\~{}kratt.}

\address{T. Rivoal,
Institut Fourier,
CNRS UMR 5582, Universit{\'e} Grenoble 1,
100 rue des Maths, BP~74,
38402 Saint-Martin d'H{\`e}res cedex,
France.\newline
WWW: \tt http://www-fourier.ujf-grenoble.fr/\~{}rivoal.}

\thanks{$^\dagger$Research partially supported 
by the Austrian Science Foundation FWF, grants Z130-N13 and grant S9607-N13,
the latter in the framework of the National Research Network 
``Analytic Combinatorics and Probabilistic Number Theory''}

\subjclass[2000]{Primary 11S80;
Secondary 11J99 14J32 33C20}

\keywords{Calabi--Yau manifolds, integrality of mirror maps,
$p$-adic analysis, Dwork's theory,  
harmonic numbers, hypergeometric differential equations}

\begin{abstract}
We continue our study begun in 
{\em ``On the integrality of the Taylor coefficients of mirror maps''} 
[Duke Math. J. (to appear)] 
of  the fine integrality properties of the Taylor coefficients of the series
${\bf q}(z)=z\exp({\bf G}(z)/{\bf F}(z))$, where ${\bf F}(z)$ and 
${\bf G}(z)+\log(z) {\bf F}(z)$ 
are specific solutions of certain hypergeometric differential
equations with maximal unipotent monodromy at $z=0$. More precisely, 
we address the question of finding the largest integer $v$
such that the Taylor coefficients of $(z ^{-1}{\bf q}(z))^{1/v}$ are still
integers. In particular, we determine the
Dwork--Kontsevich sequence $(u_N)_{N\ge1}$, where $u_N$ is the
largest integer such that 
$q_N(z)^{1/u_N}$ is a series with integer
coefficients, where $q_N(z)=\exp(G_N(z)/F_N(z))$, 
$F_N(z)=\sum _{m=0} ^{\infty} (Nm)!\,z^m/m!^N$ and
$G_N(z)=\sum _{m=1} ^{\infty} (H_{Nm}-H_m)(Nm)!\,z^m/m!^N$, with $H_n$ denoting
the $n$-th harmonic number, conditional on the conjecture that there
are no prime number $p$ and integer $N$ such that the $p$-adic 
valuation of $H_N-1$ is strictly greater than~$3$.
\end{abstract}

\maketitle

\section{Introduction and statement of results}

The present article is a sequel to our article~\cite{kratrivmirror}, where we proved 
general results concerning the integrality properties of  
mirror maps. We shall prove here stronger integrality assertions for certain special  
cases that appear frequently in the literature.
For any vector $\mathbf{N}=(N, \ldots, N)$ (with $k$ occurrences of
$N$), where $N$ is a positive integer, let us define the power series
\begin{equation}
F_{\mathbf{N}}(z)= \sum_{m=0}^{\infty} 
\frac{(Nm)!^k}{m!^{kN}}  
z^m
\label{eq:FN}
\end{equation}
and
\begin{equation}
G_{\mathbf{N}}(z)=\sum_{m=1}^{\infty} 
kN(H_{Nm}-H_m)
\frac{(Nm)!^k}{m!^{kN}} z^m,
\label{eq:GN}
\end{equation}
with 
$H_n:=\sum _{i=1} ^{n}\frac {1} {i}$ denoting the $n$-th harmonic number.
The functions $F_{\mathbf{N}}(z)$ and
$G_{\mathbf{N}}(z)+\log(z)F_{\mathbf{N}}(z)$ are solutions of the same
hypergeometric differential equation with maximal unipotent monodromy
at $z=0$. A basis of solutions with at most logarithmic
singularities around $z=0$ can then be obtained by Frobenius' method;
see~\cite{yoshida}.  
That differential equation is the Picard--Fuchs equation 
of a one parameter family of mirror manifolds 
$W'$ of a complete intersection $W$ of 
$k$ hypersurfaces $W_1, \ldots, W_k$, all of degree $N$ in
$\mathbb{P}^{d+k}(\mathbb{C})$: $W$ is a family of Calabi--Yau manifolds  
if one chooses
$d$ equal to $k(N-1)-1$. The mirrors $W'$ are
explicitly constructed in~\cite[Sec.~5.2]{batstrat}. In the underlying
context of mirror symmetry, it is natural to  
define the canonical coordinate
$q_{\mathbf{N}}(z):=z\exp\big(G_{\mathbf{N}}(z)/F_{\mathbf{N}}(z)\big)$
and 
the mirror map $z_{\mathbf{N}}(q)$, which is the compositional
inverse of  $q_{\mathbf{N}}(z)$.

In~\cite{kratrivmirror}, we proved the following result, which settled
a conjecture in the folklore of mirror symmetry theory.

\begin{equation} \label{eq:folk} 
\vbox{\hsize14.7cm
{\medskip\leftskip1cm\rightskip1cm
\em\noindent
For any integers $k\ge 1$ and $N\ge 1$, we have 
$q_{\mathbf N}(z)\in z\mathbb{Z}[[z]]$ and 
$z_{\mathbf N}(q)\in q\mathbb{Z}[[q]]$, where  
$N$ is repeated $k$ times in the vector 
$\mathbf N=(N,\ldots, N)$.~$($\footnote{%
In the number-theoretic study undertaken in the present paper,
we are interested in the integrality of the coefficients of roots of mirror
maps $z(q)$. In that context, $z(q)$ and the
corresponding canonical coordinate $q(z)$
play strictly the same role, because $(z^{-1}q(z))^{1/\tau}\in 
1+z\mathbb{Z}[[z]]$ for some integer  
$\tau$ implies that $(q^{-1}z(q))^{1/\tau}\in 1+q\mathbb{Z}[[q]]$, and
conversely; see~\cite[Introduction]{lianyau1}.
We shall, in the sequel, formulate our integrality results exclusively for
canonical coordinates. By abuse of terminology, we 
shall often use the term ``mirror map'' for any  
canonical coordinate.}$)$ 

}
}
\end{equation}

\medskip
Lian and Yau \cite[Sec.~5, Theorem~5.5]{lianyau} had proved earlier
the particular case of  
this theorem where $k=1$ and $N$ is a prime number, and 
Zudilin~\cite[Theorem~3]{zud} had extended their result to any $k\ge 1$
and any $N$ which is a prime power. Zudilin also formulated a more
general conjecture, implying \eqref{eq:folk}, 
which he proved in a 
particular case. The conjecture was subsequently fully proved as one
of the main results in~\cite{kratrivmirror}.

For $k=1$, physicists made the observation that, apparently, even the
stronger assertion
\begin{equation}\label{eq:raflianyau}
\big(z^{-1}q_{(N)}(z)\big)^{1/N}\in \mathbb{Z}[[z]]
\end{equation}
holds.
This was proved by Lian and Yau \cite{lianyau2} for any prime number $N$, 
thus strengthening their result from~\cite{lianyau} mentioned above. 
The observation \eqref{eq:raflianyau} leads naturally to the more
general question of determining the largest integer $V$ 
such that $\left(z ^{-1}q(z) \right) ^{1/V}\in 
\mathbb{Z}[[z]]$ for  mirror maps $q(z)$ such as 
$q_{\mathbf{N}}(z)$.~(\renewcommand\thefootnote{1}\footnote{%
In the number-theoretic study undertaken in the present paper,
we are interested in the integrality of the coefficients of roots of mirror
maps $z(q)$. In that context, $z(q)$ and the
corresponding canonical coordinate $q(z)$
play strictly the same role, because $(z^{-1}q(z))^{1/\tau}\in 
1+z\mathbb{Z}[[z]]$ for some integer  
$\tau$ implies that $(q^{-1}z(q))^{1/\tau}\in 1+q\mathbb{Z}[[q]]$, and
conversely; see~\cite[Introduction]{lianyau1}.
We shall, in the sequel, formulate our integrality results exclusively for
canonical coordinates. By abuse of terminology, we 
shall often use the term ``mirror map'' for any  
canonical coordinate.
\newline
\indent 
$^2$Let $q(z)$ be a given power 
series in $\mathbb{Z}[[z]]$, and let
$V$ be the largest integer with the property that $q(z)^{1/V}\in \mathbb
Z[[z]]$. Then $V$ carries complete information about {\it all\/}
integers with that property: namely,
the set of integers $U$ with $q(z)^{1/U}\in \mathbb
Z[[z]]$ consists of all divisors of $V$. Indeed, it is clear that all
divisors of $V$ belong to this set. Moreover, 
if $U_1$ and $U_2$ belong to this set, then also $\lcm(U_1,U_2)$ does
(cf.\ \cite[Lemma~5]{HeRSAA} for a simple proof 
based on B\'ezout's lemma).}) A rather general result 
in this direction has already
been obtained in \cite[Theorem~2]{kratrivmirror}. 
The purpose of the present paper
is to sharpen this earlier result for wide classes of special
choices of the parameters occurring in \cite{kratrivmirror}.
Indeed, our main results, given in 
Theorems~\ref{thm:3} and \ref{thm:3a} below, provide values of $V$
for infinite families of mirror(-type) maps, which, conditional on
widely believed conjectures on the $p$-adic valuations of $H_N$
respectively $H_N-1$, are optimal for these families.

\medskip

To describe our results,
for positive integers $L$ and $N$, we set 
\begin{equation}
G_{L,\mathbf{N}}(z)=\sum_{m=1}^{\infty} H_{Lm} 
\frac{(Nm)!^k}{m!^{kN}} z^m,
\label{eq:GLN}
\end{equation}
where again $\mathbf N=(N,N,\dots,N)$, with $k$ occurrences of $N$.
We then define  the mirror-type map
$q_{L,\mathbf{N}}(z):=\exp\big(G_{L,\mathbf{N}}(z)/F_{\mathbf{N}}(z)\big)$.
Obviously,  
the mirror map $q_{\mathbf{N}}(z)$ can be expressed as a product
of the series $q_{L,\mathbf{N}}(z)$, namely as
\begin{equation} \label{eq:truemap} 
q_{\mathbf{N}}(z)=
zq_{N,\mathbf N}^{kN}(z)q_{1,\mathbf N}^{-kN}(z).
\end{equation}

The special case of the afore-mentioned Theorem~2 from
\cite{kratrivmirror} where $\mathbf
N=(N,N,\dots,N)$ (with $k$ occurrences of $N$) addressed the above
question of ``maximal integral roots''
for the mirror-type map $q_{L,\mathbf N}(z)$.
It reads as follows:

\begin{equation} \label{eq:NNNN} 
\vbox{\hsize14cm
{\leftskip1cm\rightskip1cm
\em\noindent
Let $\Th_L:=L!/\gcd(L!, L!\,H_L)$ be the denominator of $H_L$
when written as a reduced fraction. For any positive integers $N$ and  
$L$ with $L\le N$, we have 
$q_{L,\mathbf{N}}(z)^{\frac{\Th_L}{N!^k}} \in\mathbb{Z}[[z]].$

}
}
\end{equation}

\medskip
As we remarked in \cite{kratrivmirror}, this result is optimal in the
case that $L=1$; that is, no integer $V$ larger than $N!^k/\Th_1=N!^k$ 
can be found such that $q_{L,\mathbf{N}}(z)^{1/V} \in\mathbb{Z}[[z]]$.
However, if $L\ge2$, improvements may be possible.

Our first main result provides such an improvement. In order to
state the result, we need to introduce usual notation for $p$-adic
valuation: given a prime number $p$ (and from now on, $p$ will always
denote a prime number) and $\alpha\in \mathbf Q_p$, 
$v_p(\alpha)$ denotes the $p$-adic valuation of $\alpha$. 

\begin{theo} \label{thm:3}
Let $N$ be a positive integer, 
$\mathbf N=(N,N,\dots,N)$, with $k$ occurrences of $N$,
and let $\Xi_1=1$, $\Xi_7=1/140$, and, for $N\notin\{1,7\}$,
\begin{equation} \label{eq:Xi} 
\Xi_N:=
{\prod _{p\le N}
^{}}p^{\min\{2+\xi(p,N),v_p(H_N)\}},
\end{equation}
where $\xi(p,N)=1$ if $p$ is a Wolstenholme prime {\em(}i.e.,
a prime $p$ for which $v_p(H_{p-1})\ge3$
{\em(}\renewcommand\thefootnote{3}\footnote{Presently, only two such primes are known, namely 
$16843$ and $2124679$, and it is unknown whether there are infinitely
many Wolstenholme primes or not.}{\em))} or $N$ is divisible by
$p$, and $\xi(p,N)=0$ otherwise. 
Then 
$q_{N,\mathbf N}(z)^{\frac{1}{\Xi_{N}N!^k}} \in\mathbb{Z}[[z]].$
\end{theo}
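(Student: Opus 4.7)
The plan is to establish the integrality claim one prime at a time; for a fixed prime $p$ I would show that $q_{N,\mathbf{N}}(z)^{1/V}\in\mathbb{Z}_p[[z]]$, where $V:=\Xi_N N!^k$. Since $q_{N,\mathbf{N}}(z)=\exp\bigl(G_{N,\mathbf{N}}(z)/F_{\mathbf{N}}(z)\bigr)$ and $F_{\mathbf{N}}(z)$ is a unit in $\mathbb{Z}_p[[z]]$, the Dieudonn\'e--Dwork criterion applied to the exponential reduces the claim to the power-series congruence
$$F_{\mathbf{N}}(z)\,G_{N,\mathbf{N}}(z^p)\ \equiv\ p\,F_{\mathbf{N}}(z^p)\,G_{N,\mathbf{N}}(z)\pmod{p\,V\,F_{\mathbf{N}}(z)F_{\mathbf{N}}(z^p)\cdot z\,\mathbb{Z}_p[[z]]},$$
with an extra factor of $2$ needed at $p=2$.

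The case $V=N!^k/\Theta_N$ of this congruence is exactly what was established in \cite{kratrivmirror} in the course of proving \eqref{eq:NNNN} with $L=N$, so what remains is an additional $p$-adic gain of $v_p(\Xi_N\Theta_N)=\min\{2+\xi(p,N),\,v_p(H_N)\}+v_p(\Theta_N)$. I would extract the coefficient of $z^n$ on each side and compare them modulo the desired power of $p$ using two refined inputs. The first is a higher-order Dwork-type congruence for the hypergeometric coefficients $A_m:=(Nm)!^k/m!^{kN}$: a suitably normalized version of $A_m/(A_{m_0}A_{(m-m_0)/p})$, where $m_0$ is the lowest $p$-adic digit of $m$, is congruent to $1$ modulo $p^{2+\xi(p,N)}$, with an extra factor of $p$ absorbed in the case $p\mid N$ from the valuation carried by $(Nm)!$ itself. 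The second is the refined Wolstenholme-type congruence
$$pH_{pn}-H_n\ \equiv\ 0\pmod{p^{2+\xi(p,N)}},$$
which for $\xi(p,N)=0$ follows from the classical $H_{p-1}\equiv0\pmod{p^2}$ of Wolstenholme, and for Wolstenholme primes from the stronger $H_{p-1}\equiv0\pmod{p^3}$. A coefficient-by-coefficient comparison combining these two refinements yields the required improvement.

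The main obstacle is upgrading the Dwork congruence for $A_m$ from its classical modulus $p$ to the finer modulus $p^{2+\xi(p,N)}$: this requires a careful $p$-adic expansion of the factorial quotients, standard in principle but delicate in execution, and it must be arranged so that the gained precision is not lost when paired with the harmonic refinement. A secondary issue is the degenerate regime in which the minimum in \eqref{eq:Xi} is attained by $v_p(H_N)$ rather than $2+\xi(p,N)$, where $v_p(\Xi_N\Theta_N)=0$ and the claim at the prime $p$ reduces directly to \eqref{eq:NNNN}. The exceptional cases $N=1$ (trivial, $V=1$) and $N=7$ (where the numerator $363=3\cdot11^2$ of $H_7=363/140$ carries an anomalous factor $11^2$, forcing $\Xi_7=1/140$ in place of the value $3/140$ given by the general formula) are handled separately by hand.
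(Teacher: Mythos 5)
Your opening move is correct and matches the paper: apply the Dieudonn\'e--Dwork criterion (the paper's Lemma~\ref{lem:4}) prime by prime, reducing the problem to showing $F_{\mathbf N}(z)G_{N,\mathbf N}(z^p)-pF_{\mathbf N}(z^p)G_{N,\mathbf N}(z)\in p\,\Xi_N N!^k\,z\,\mathbb Z_p[[z]]$; and you correctly observe that once the $p$-adic excess over the earlier result~\eqref{eq:NNNN} is identified, the problem localises to primes $p\le N$ with $v_p(H_N)>0$. Beyond this point, however, your plan diverges from what is actually needed, and the two ``refined inputs'' you propose do not suffice as stated.

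First, the central tool you invoke --- a ``higher-order Dwork-type congruence'' for the ratios $A_m/(A_{m_0}A_{(m-m_0)/p})$ at modulus $p^{2+\xi(p,N)}$ --- is not what supplies the extra $p$-power, and a congruence at that modulus for these ratios is not established (nor, I believe, true in the required generality). The paper does \emph{not} upgrade Dwork's formal congruence theorem: it uses it unchanged (Lemma~\ref{lem:10}, giving $S(a,K,s,p,m)\in p^{s+1}B_{\mathbf N}(m)\mathbb Z_p$). The genuine source of the additional $p$-power is quite different. It is a lower bound on $v_p(B_{\mathbf N}(m))$ of the form $\fl{N/p}+v_p(N!^k)$ for suitable $m$ (Lemmas~\ref{lem:B1}, \ref{lem:B2}, Corollary~\ref{cor:C1}), combined with the arithmetic fact that $v_p(H_N)>0$ forces $N$ to be a large multiple of $p$ (Lemma~\ref{lem:p<L}: $N\ge 2p$, $3p$, $5p$, or $6p$ depending on the case). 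That interplay between ``$H_N$ has positive $p$-adic valuation'' and ``therefore $\fl{N/p}$ is large, hence $B_{\mathbf N}$ carries many factors of $p$'' is the engine of the proof and is entirely absent from your plan.

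Second, the mechanism for turning these local estimates into a bound on the Taylor coefficients is also missing. The coefficient $C(a+Kp)$ is a convolution sum, and a direct ``coefficient-by-coefficient comparison'' does not factor it into pieces amenable to the two inputs. The paper first uses Dwork's combinatorial identity (its \cite[Lemma~4.2]{dwork}, equation~\eqref{eq:107}) to rewrite the sum as $\sum_{s,m}Y_{m,s}$ with $Y_{m,s}$ a product of a harmonic-number difference and the telescoped sum $S(a,K,s,p,m)$; only then do the formal congruence theorem and the harmonic estimates (Lemma~\ref{lem:11}) bite. Without this reorganisation, I do not see how you would carry out the comparison you describe.

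Third, the Wolstenholme input is stated at the wrong level. The congruence $pH_{pn}-H_n\equiv 0\pmod{p^{2}}$ for $\xi=0$ is weaker than what holds (already $\pmod{p^3}$ for $p\ge5$, Corollary~\ref{cor:W2}), and for $\xi=1$ the modulus $p^3$ is \emph{too weak}: when $v_p(H_N)\ge3$ and $p$ is Wolstenholme or $p\mid N$, the proof requires $\pmod{p^4}$ (see equation~\eqref{eq:firstreductionUW}), which comes not from a uniform refinement but from the specific congruence $pH_{pN}-H_N\equiv pN^2H_{p-1}\pmod{p^4}$ (Lemma~\ref{lem:congH}), together with Lemma~\ref{lem:W3} for indices divisible by $p^2$. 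There is also an important further step you do not address: after the $p$-adic reduction of $pH_{Na+Njp}$ to $H_{\fl{Na/p}+Nj}$, one must still eliminate the floor $\fl{Na/p}$ and reduce to indices of the form $H_{Nj}$; this is the content of the rather delicate Lemma~\ref{lem:12}, which occupies all of Section~\ref{sec:4}. In short, the skeleton (Dieudonn\'e--Dwork, plus Wolstenholme-type congruences) is right, but the actual load-bearing elements --- the $\fl{N/p}$ valuation gain, Lemma~\ref{lem:p<L}, Dwork's combinatorial lemma, Lemma~\ref{lem:12}, and the precise $\pmod{p^3}$/$\pmod{p^4}$ Wolstenholme statements --- are either missing or replaced by an upgrade of Dwork's congruence that is not available.
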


\begin{Remarks} \label{rem:Xi7}
For better comprehension, we discuss the meaning of the
statement of Theorem~\ref{thm:3} and its implications;
in particular, we address some fine points of the definition of $\Xi_N$.

\smallskip
(a) The case of $N=1$ is trivial since 
$q_{1,(1,\dots,1)}(z)=1/(1-z)$. Furthermore, we have
$$\Xi_7=\frac {1} {140}=2^{v_2(H_7)}5^{v_5(H_7)}7^{v_7(H_7)},$$
which differs by a factor of $3$ from the right-hand side of
\eqref{eq:Xi} with $N=7$ 
(since $v_3(H_7)=v_3(\frac {363} {140})=1$).

\smallskip
(b) Let $V_N$ denotes the largest integer such that 
$\big(z^{-1}q_{N,\mathbf N}(z)\big)^{1/V_N}$ is a series with
integer coefficients.
Since 
$q_{N,\mathbf N}(z)=1+H_N N!^kz+\mathcal O(z^2)$, 
it is clear that $q_{N,\mathbf N}(z)^{1/(p^{v_p(H_N)+1}N!^k)}\notin
\mathbb{Z}[[z]]$, so that the  
exponent of $p$ in the prime factorisation of
$V_{N}$ can be at most $v_p(H_N N!^k)$.
In Theorem~\ref{thm:3}, this theoretically maximal exponent is
cut down to $v_p(\Xi_N N!^k)$. First of all, the number $\Xi_N$ 
contains no prime factor $p>N$, whereas the harmonic number $H_N$ may very well do so
(and, in practice, always does for $N>1$).
Moreover, for primes $p$ with $p\le N$
and $v_p(H_N)\ge3$, the definition of $\Xi_N$ cuts the theoretically
maximal exponent $v_p(H_NN!^k)$
of $p$ down to $2+v_p(N!^k)$ respectively
$3+v_p(N!^k)$, depending on whether $\xi(p,N)=0$ or $\xi(p,N)=1$.
In items~(c)--(e) below, we address the question of how serious this cut is
expected to be.

\smallskip
(c) Clearly, the minimum appearing in the exponent of $p$ in the definition
\eqref{eq:Xi} of $\Xi_N$ is $v_p(H_N)$ as long as $v_p(H_N)\le 2$. 
In other words, the exponent of $p$ in the prime factorisation of
$\Xi_N$ depends largely on the $p$-adic behaviour of $H_N$. 
An extensive discussion of this topic, with many interesting results,
can be found in \cite{boyd}. We have as well computed a table of
harmonic numbers $H_N$ up to 
$N=1000000$.~(\renewcommand\thefootnote{4}\footnote{The summary of the table is available at
{\tt
http://www.mat.univie.ac.at/\~{}kratt/artikel/H.html}.})
Indeed, the data suggest that
pairs $(p,N)$ with $p$ prime, $p\le N$, and $v_p(H_N)\ge 3$ 
are not very frequent. 
More precisely, so far only five examples are
known with $v_p(H_N)=3$: four for $p=11$, with $N=848,9338,10583$,
and $3546471722268916272$, and one for $p=83$
with
\begin{align} \notag
&\hbox{\small
$N=79781079199360090066989143814676572961528399477699516786377994370\backslash$}
\\
&\kern2cm
   \hbox{\small $78839681692157676915245857235055200779421409821643691818$}
\label{eq:boyd} 
\end{align}
(see \cite[p.~289]{boyd}; the value of $N$ in \eqref{eq:boyd}, not
printed in \cite{boyd}, was kindly communicated to us by David Boyd). 
There is no example known with $v_p(H_N)\ge4$.
It is, in fact, conjectured that no $p$ and $N$ exist with
$v_p(H_N)\ge4$. Some evidence for this conjecture (beyond mere
computation) can be found in \cite{boyd}.

\smallskip
(d) Since in all the five examples for which
$v_p(H_N)=3$ we neither have $p\mid N$ (the gigantic number in
\eqref{eq:boyd} is congruent to $42$ modulo $83$) nor that the prime $p$ is a
Wolstenholme prime, the exponent of $p$ in the prime factorisation of
$\Xi_N$ in these cases is $2$ instead of $v_{p}(H_N)=3$. 

\smallskip
(e) On the other hand, should there be a prime $p$ and an integer $N$
with $p\le N$, $v_p(H_N)\ge3$, $p$ a Wolstenholme prime or $p\mid
N$, then the exponent of $p$ in the prime factorisation of
$\Xi_N$ would be $3$. However, no such examples are known. We
conjecture that there are no such pairs $(p,N)$. If this conjecture
should turn out to be true, then, given $N\notin\{1,7\}$,
the definition of $\Xi_N$ in \eqref{eq:Xi} could be simplified to
\begin{equation} \label{eq:Xisimpl} 
\Xi_N:=
{\prod _{p\le N}
^{}}p^{\min\{2,v_p(H_N)\}}.
\end{equation}
\end{Remarks}

Theorem~\ref{thm:3}
improves upon \eqref{eq:NNNN} for $L=N$.
Namely, if one compares the definition of $\Xi_N$ in \eqref{eq:Xi}
with the following alternative way to write the integer 
$\Th_N$ occurring in \eqref{eq:NNNN},
\begin{equation} \label{eq:ThL}
\Th_N=
{\prod _{p\le N}^{}}p^{-\min\{0,v_p(H_N)\}},
\end{equation}
we see that
Theorem~\ref{thm:3} is {\it always} at least as strong as 
\cite[Theorem~2]{kratrivmirror}, 
and it is {\it strictly} stronger if $N\ne7$ and
$v_p(H_N)\ge 1$ for some prime $p$ less than or equal to $N$. 
Indeed, the smallest $N\ne 7$ with that property is $N=20$, in which case 
$v_5(H_{20})=1$.

We remark that strengthenings of
\cite[Theorem~2]{kratrivmirror} in the spirit of Theorem~\ref{thm:3} 
for more general choices of the parameters 
can also be obtained by our techniques but are omitted here. 

We outline the proof of Theorem~\ref{thm:3} in Section~\ref{sec:2},
with the details being filled in 
in Sections~\ref{sec:4}--\ref{sec:aux}.
As we explain in Section~\ref{sec:DworkKont}, 
we conjecture that Theorem~\ref{thm:3} cannot be improved if $k=1$,
that is, that for $k=1$ the largest integer $t_N$ such that
$q_{N,(N)}(z)^{1/t_N}\in\mathbb Z[[z]]$ is exactly $\Xi_N N!$. 
Propositions~\ref{prop:p>N} and \ref{prop:vp=3} 
in Section~\ref{sec:DworkKont} show that 
this conjecture would immediately follow if one could prove
the conjecture from Remarks~\ref{rem:Xi7}(c) above that there are no 
primes $p$ and integers $N$ with $v_p(H_N)\ge 4$. 

\medskip
Even if the series $q_{N,\mathbf N}(z)$ appears in the identity
\eqref{eq:truemap}, which relates the mirror map $q_{\mathbf N}(z)$ 
to the series
$q_{L,\mathbf N}(z)$ (with $L=1$ and $L=N$), Theorem~\ref{thm:3} does not
imply an improvement over \cite[Corollary~1]{kratrivmirror}, which we
recall here for convenience.

\begin{equation} \label{eq:cor1} 
\vbox{\hsize14.7cm
{\leftskip1cm\rightskip1cm
\em\noindent
For all integers $k\ge 1$ and $N\ge 1$, we have 
$
\big(z^{-1}q_{\mathbf N}(z)\big)^{\frac{\Th_N}{N!^k kN}} \in \mathbb{Z}[[z]],
$
where $\Th_N$ is defined in \eqref{eq:NNNN}.

}
}
\end{equation}

\medskip\noindent
The reason is that the
coefficient of $z$ in $(z^{-1}q_{\mathbf N}(z))^{\Th_N/(pN!^kkN)}$ is equal
to $\frac {\Th_N} {p}(H_N-1)$, and, thus,
it will not be integral for primes $p$
with $v_p(H_N)>0$. Still, there is an improvement of
\eqref{eq:cor1} in the spirit of Theorem~\ref{thm:3}. It is our second
main result, and it
involves primes $p$ with $v_p(H_N-1)>0$ instead.

\begin{theo} \label{thm:3a}
Let $N$ be a positive integer with $N\ge2$, and let
\begin{equation} \label{eq:Om} 
\Om_N:=
{\prod _{p\le N}
^{}}p^{\min\{2+\om(p,N),v_p(H_N-1)\}},
\end{equation}
where $\om(p,N)=1$ if $p$ is a Wolstenholme prime 
or $N\equiv\pm1$~{\em mod}~$p$, and $\om(p,N)=0$ otherwise. 
Then 
$\big(z^{-1}q_{\mathbf N}(z)\big)^{\frac{1}{\Om_{N}N!^kkN}} \in\mathbb{Z}[[z]].$
\end{theo}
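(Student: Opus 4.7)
The plan is to reduce the theorem to an integrality statement about $Q(z):=q_{N,\mathbf N}(z)/q_{1,\mathbf N}(z)$ and then handle it prime by prime, mimicking the Dwork-theoretic proof of Theorem~\ref{thm:3} outlined in Section~\ref{sec:2}. Identity~\eqref{eq:truemap} rewrites $z^{-1}q_{\mathbf N}(z)=Q(z)^{kN}$, so Theorem~\ref{thm:3a} is equivalent to
\[
Q(z)^{1/(\Om_N N!^k)}\in\mathbb Z[[z]].
\]
Since $Q(z)=1+(H_N-1)N!^k z+O(z^2)$, the difference $H_N-1$ (rather than $H_N$ as in Theorem~\ref{thm:3}) controls the coefficient of~$z$ and thereby bounds the largest admissible exponent, explaining the shape of~\eqref{eq:Om}. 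By the local-global principle together with the lcm-closure of integral roots recalled earlier, it suffices to show, for every prime~$p$, that $Q(z)^{1/p^{s_p}}\in\mathbb Z_p[[z]]$, where $s_p:=v_p(\Om_N N!^k)$.

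For $p>N$ one has $s_p=0$, and the assertion reduces to $Q(z)\in\mathbb Z_p[[z]]$, which is immediate from Theorem~\ref{thm:3} applied to $q_{N,\mathbf N}$ together with~\eqref{eq:NNNN} for $L=1$ applied to $q_{1,\mathbf N}$ (noting $\Th_1=1$), since for such $p$ neither $\Xi_N$ nor $N!^k$ contains the prime~$p$. For $p\le N$, the Dieudonn\'e--Dwork criterion asserts that $Q(z)^{1/p^{s_p}}\in\mathbb Z_p[[z]]$ is equivalent to
\[
\frac{Q(z^p)}{Q(z)^p}\in\bigl(1+pz\,\mathbb Z_p[[z]]\bigr)^{p^{s_p}}.
\]
Via $\log Q(z)=(G_{N,\mathbf N}(z)-G_{1,\mathbf N}(z))/F_{\mathbf N}(z)$, this translates into Frobenius-type congruences for $F_{\mathbf N}$ and for $\sum_{m\ge1}(H_{Nm}-H_m)(Nm)!^k z^m/m!^{kN}$ at the $p$-adic level. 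These are of the same nature as those produced in~\cite{kratrivmirror} for~\eqref{eq:NNNN} and~\eqref{eq:cor1}, but require the sharpenings to be developed in Sections~\ref{sec:4}--\ref{sec:aux} in support of Theorem~\ref{thm:3}.

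The main obstacle is to establish these congruences at the sharp level $p^{s_p+1}$ prescribed by~$\Om_N$. Generic Dwork theory yields the baseline exponent $2+v_p(N!^k)$, accounting for the constant $2$ inside the minimum in~\eqref{eq:Om}, while the extra factor $p^{\om(p,N)}$ must be squeezed out of two disjoint fine phenomena. When $p$ is a Wolstenholme prime, the strengthened bound $v_p(H_{p-1})\ge 3$ furnishes one additional~$p$ in the truncation of $H_{Nm}-H_m$ at relevant powers of~$p$. When $N\equiv \pm 1\pmod p$, an additional~$p$ arises from refined Wolstenholme-type congruences for $H_N-1$ that become available precisely because $N\mp 1$ is then divisible by~$p$, which is what permits the pairing/telescoping in $\sum_{k=2}^{N}1/k$ to gain one extra $p$. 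The delicate technical point will be to verify that these are the only sources of gain, that they do not compound, and that the book-keeping of error terms at the level of $F_{\mathbf N}$ remains consistent with this sharp exponent. I expect this step to constitute the bulk of the work, in direct analogy with the $H_N$-analysis performed for Theorem~\ref{thm:3}.
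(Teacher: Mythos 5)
Your proposal is correct and follows essentially the same route as the paper: it reduces via~\eqref{eq:truemap} to the integrality of $p^{s_p}$-th roots of $q_{N,\mathbf N}(z)/q_{1,\mathbf N}(z)=\exp\big(\widetilde G_{\mathbf N}(z)/F_{\mathbf N}(z)\big)$ with $\widetilde G_{\mathbf N}(z)=\sum_{m\ge1}(H_{Nm}-H_m)B_{\mathbf N}(m)z^m$, applies the Dwork criterion prime by prime, and correctly identifies that the extra factor $p^{\om(p,N)}$ in $\Om_N$ arises exactly from the Wolstenholme condition $v_p(H_{p-1})\ge3$ or from $N\equiv\pm1\pmod p$ (which the paper formalizes in Lemma~\ref{lem:congH2} via $p(H_{pN}-H_p)-(H_N-1)\equiv p(N^2-1)H_{p-1}\pmod{p^4}$). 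The remaining work you defer — the $H_N-1$ analogues of Lemmas~\ref{lem:H_L}--\ref{lem:p<L} and of Lemmas~\ref{lem:12} and~\ref{lem:11} — is precisely what Section~\ref{sec:Om} of the paper carries out.
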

\begin{Remarks} \label{rem:Om}
Also here, some remarks are in order to get a better understanding of
the above theorem.

\smallskip
(a)
If $v_p(H_N)< 0$, then $v_p(H_N)=v_p(H_N-1)$. Hence, 
differences in the prime factorisations of $\Xi_N$ and $\Om_N$ 
can only arise for primes $p$ with $v_p(H_N)\ge0$.

\smallskip
(b) Since $z^{-1}q_{\mathbf N}(z)=1+(H_N-1)N!^kkNz+\mathcal O(z^2)$, it is
clear that  
$$\big(z^{-1}q_{\mathbf N}(z)\big)^{1/(p^{v_p(H_N-1)+1}N!^kkN)}\notin
\mathbb{Z}[[z]].$$
If $\widetilde V_N$ denotes the largest integer such that 
$\big(z^{-1}q_{\mathbf N}(z)\big)^{1/\widetilde V_NkN}$ is a series with
integer coefficients,
the exponent of $p$ in $\widetilde V_N$ can be at most 
$v_p\big((H_N-1)N!^k\big)$.
In Theorem~\ref{thm:3a}, this theoretically maximal exponent is
cut down to $v_p(\Om_NN!^k)$. Namely, as is the case for $\Xi_N$, 
the number $\Om_N$ in \eqref{eq:Om}
contains no prime factor $p>N$, whereas the difference $H_N-1$ may very well do so
(and, in practice, always does for $N>2$). 
Moreover, for primes $p$ with $p\le N$
and $v_p(H_N-1)\ge3$, the definition of $\Om_N$ cuts the theoretically
maximal exponent $v_p\big((H_N-1)N!^k\big)$
of $p$ down to $2+v_p(N!^k)$ respectively
$3+v_p(N!^k)$, depending on whether $\om(p,N)=0$ or $\om(p,N)=1$.

\smallskip
(c)
Concerning the question whether there are any primes $p$ and integers $N$
with high values of $v_p(H_N-1)$, 
we are not aware of any corresponding literature. 
Our table of harmonic numbers $H_N$ mentioned in Remarks~\ref{rem:Xi7}(c)
does not contain any pair $(p,N)$ with 
$v_p(H_N-1)\ge3$.~(\renewcommand\thefootnote{5}\footnote{The summary of the 
corresponding table, containing pairs $(p,N)$ with $p\le N$ and
$v_p(H_N-1)>0$, is available at
{\tt
http://www.mat.univie.ac.at/\~{}kratt/artikel/H1.html}.})
In ``analogy'' to the conjecture mentioned in
Remarks~\ref{rem:Xi7}(c), we conjecture that no $p$ and $N$ exist with
$v_p(H_N-1)\ge4$. It may even be true that there are no $p$ and
$N$ with $v_p(H_N-1)\ge3$, in which case 
the definition of $\Om_N$ in \eqref{eq:Om} could be simplified to
\begin{equation} \label{eq:Omsimpl} 
\Om_N:=
{\prod _{p\le N}
^{}}p^{\min\{2,v_p(H_N-1)\}}.
\end{equation}
\end{Remarks}

In view of \eqref{eq:ThL}, Theorem~\ref{thm:3a} 
improves upon \eqref{eq:cor1}.
Namely, Theorem~\ref{thm:3a} is {\it always} at least as strong as 
\eqref{eq:cor1}, and it is {\it strictly} stronger if 
$v_p(H_N-1)\ge 1$ for some prime $p$ less than or equal to $N$. 
The smallest $N$ with that property is $N=21$, in which case 
$v_5(H_{21}-1)=1$.

We sketch the proof of Theorem~\ref{thm:3a} in Section~\ref{sec:Om}.
We also explain in that section that
we conjecture that Theorem~\ref{thm:3a} with $k=1$ is optimal,
that is, that for $k=1$ the largest integer $u_N$ such that
$\big(z^{-1}q_{(N)}(z)\big)^{\frac{1}{Nu_N}} \in\mathbb{Z}[[z]]$ 
is exactly $\Om_N N!$. 
Propositions~\ref{prop:p>N2} and \ref{prop:vp=32} 
in Section~\ref{sec:Om} show that 
this conjecture would immediately follow if one could prove
the conjecture from Remarks~\ref{rem:Om}(c) above that there are no 
primes $p$ and integers $N$ with $v_p(H_N-1)\ge 4$. 
As a matter of fact, the sequence $(u_{2N})_{N\ge1}$ appears in the 
On-Line Encyclopedia of Integer Sequences~\cite{oeis},
as sequence {\tt A007757}, contributed around 1995 by R.~E.~Borcherds under 
the denomination ``Dwork--Kontsevich sequence,''
without any reference or explicit formula for it, however.
(\renewcommand\thefootnote{6}\footnote{In private communication, both, Borcherds and Kontsevich
could not remember where exactly this sequence and its denomination came 
from.}) 

\section{Structure of the paper}

We now briefly review the organisation of 
the rest of the paper. 
Following the steps of previous authors, our approach
for proving Theorems~\ref{thm:3} and \ref{thm:3a}
uses $p$-adic analysis. In particular, we make essential
use of Dwork's $p$-adic theory (in the spirit of \cite{lianyau2}). 
Since the details of our proofs are involved, we provide brief
outlines of the proofs of Theorems~\ref{thm:3} and \ref{thm:3a}
in separate sections.
Namely, Section~\ref{sec:2} provides an outline of the proof of
Theorem~\ref{thm:3}, while Section~\ref{sec:Om} contains a sketch of
the proof of Theorem~\ref{thm:3a}.
Both follow closely the chain of arguments
used in the proof of \cite[Theorem~2]{kratrivmirror}.
In the outline, respectively sketch, the proofs are reduced to a certain
number of lemmas. The lemmas which are necessary for the proof of
Theorem~\ref{thm:3} are established in
Sections~\ref{sec:4}--\ref{sec:aux}, while the lemmas which are necessary 
for the proof of Theorem~\ref{thm:3a} are contained in Section~\ref{sec:Om}.
Section~\ref{sec:DworkKont} reports on the
evidence to believe (or not to believe) that the value $t_N$
(defined in the next-to-last paragraph before Theorem~\ref{thm:3a})
is given by $t_N=\Xi_N N!$, $N=1,2,\dots$,
while Section~\ref{sec:DK} addresses 
the question of whether the Dwork--Kontsevich sequence $(u_N)_{N\ge1}$
(defined in the last paragraph of the Introduction) is (or is
not) given by $u_N=\Om_N N!$, $N=1,2,\dots$.

\section{Outline of the proof of Theorem~\ref{thm:3}}\label{sec:2}

In this section, we provide a brief outline of the proof of
Theorem~\ref{thm:3}, reducing it to
Lemmas~\ref{lem:12}--\ref{lem:congH} and Corollaries~\ref{cor:C1} and
\ref{cor:W2}, the proofs of which are postponed to
Sections~\ref{sec:4}--\ref{sec:aux}, except that Lemma~\ref{lem:10}
has already been established in \cite[Lemma~6]{kratrivmirror}. 

The whole proof is heavily based on 
Dwork's $p$-adic theory (as presented by Lian and Yau in \cite{lianyau2}), 
enhanced in the spirit of~\cite{kratrivmirror}. 
In particular, there we used the following result (cf.\ 
\cite[Lemma~10]{kratrivmirror}).

\begin{lem}\label{lem:4} Given two formal power series 
$f(z)\in 1+z\mathbb{Z}[[z]]$ and $g(z)\in z\mathbb{Q}[[z]]$, an
integer $\tau\ge 1$ and a prime number $p$, we have 
$\exp\big(g(z)/(\tau f(z))\big) \in 1+z\mathbb{Z}_p[[z]]$ if and only if 
\begin{equation}\label{eq:uv=pvu}
f(z)g(z^p)-p\,f(z^p)g(z)\in p\tau z\mathbb{Z}_p[[z]].
\end{equation}
\end{lem}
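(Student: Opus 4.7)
The plan is to derive the equivalence from two well-known $p$-adic tools: the Dieudonn\'e--Dwork criterion for $p$-integrality of a series in $1+z\mathbb{Q}_p[[z]]$, and the characterization of the image of the formal exponential on $pz\mathbb{Z}_p[[z]]$. Setting $F(z):=\exp\bigl(g(z)/(\tau f(z))\bigr)$, one notes that $f\in 1+z\mathbb{Z}[[z]]$ is a unit in $\mathbb{Q}_p[[z]]$, so $g/(\tau f)\in z\mathbb{Q}_p[[z]]$ and therefore $F\in 1+z\mathbb{Q}_p[[z]]$. Dieudonn\'e--Dwork then asserts that $F\in 1+z\mathbb{Z}_p[[z]]$ if and only if $F(z)^p/F(z^p)\in 1+pz\mathbb{Z}_p[[z]]$.

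The next step is to take formal logarithms and invoke the auxiliary equivalence: for $h\in z\mathbb{Q}_p[[z]]$, $\exp(h)\in 1+pz\mathbb{Z}_p[[z]]$ if and only if $h\in pz\mathbb{Z}_p[[z]]$. Both directions rest on the elementary estimates $v_p(p^n/n!)\ge 1$ and $v_p(p^n/n)\ge 1$ for $n\ge 1$, which follow from Legendre's formula $v_p(n!)=(n-s_p(n))/(p-1)$. Applied to $h(z)=\log\bigl(F(z)^p/F(z^p)\bigr)=pg(z)/(\tau f(z))-g(z^p)/(\tau f(z^p))$, the Dieudonn\'e--Dwork condition becomes
$$\frac{p\,f(z^p)g(z)-f(z)g(z^p)}{\tau\,f(z)f(z^p)}\in pz\mathbb{Z}_p[[z]].$$

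The final step is purely algebraic. Because both $f(z)$ and $f(z^p)$ lie in $1+z\mathbb{Z}_p[[z]]$, their product is a unit in $\mathbb{Z}_p[[z]]$ whose inverse also lies in $1+z\mathbb{Z}_p[[z]]$; multiplying through by $\tau f(z)f(z^p)$ therefore preserves membership in $pz\mathbb{Z}_p[[z]]$. A sign flip then converts the condition above into $f(z)g(z^p)-p\,f(z^p)g(z)\in p\tau z\mathbb{Z}_p[[z]]$, which is precisely \eqref{eq:uv=pvu}.

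The main obstacle, such as it is, lies in establishing the exp/log equivalence uniformly over all primes $p$: to cover $p=2$ one needs the exact Legendre formula rather than the crude bound $v_p(n!)\le n/(p-1)$. Apart from this small technicality, the argument is a standard combination of Dwork's lemma with the $p$-adic behaviour of $\exp$ and $\log$, and indeed the same statement appears already as Lemma~10 of \cite{kratrivmirror}.
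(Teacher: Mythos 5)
Your proof is correct, and it uses what is almost certainly the same strategy as the cited source: the present paper does not reprove Lemma~\ref{lem:4} but refers to Lemma~10 of \cite{kratrivmirror}, and the route you take --- Dieudonn\'e--Dwork reduction of $p$-integrality to $F(z)^p/F(z^p)\in 1+pz\mathbb{Z}_p[[z]]$, followed by the formal $\exp$/$\log$ equivalence between $1+pz\mathbb{Z}_p[[z]]$ and $pz\mathbb{Z}_p[[z]]$, and a final clearing of the unit denominator $\tau f(z)f(z^p)$ --- is the standard one in this context. Your remark about needing the exact Legendre bound $v_p(n!)\le (n-1)/(p-1)$ rather than $n/(p-1)$ to cover $p=2$ is the right caveat, and both required valuation estimates ($v_p(p^n/n!)\ge 1$ and $v_p(p^n/n)\ge 1$ for $n\ge1$) do hold uniformly in $p$.
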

It follows from Lemma~\ref{lem:4} that Theorem~\ref{thm:3} 
can be reduced to the following statement:  for any prime number $p$, 
we have 
$$
F_{\mathbf N}(z)G_{N,\mathbf N}(z^p)-pF_{\mathbf N}(z^p)G_{N,\mathbf
N}(z) \in p
\Xi_N N!^k z \mathbb{Z}_p[[z]].
$$ 

We now follow the  
presentation in~\cite{kratrivmirror} and let $0\le a<p$ and $K\ge 0$. 
The $(a+Kp)$-th Taylor coefficient of
$F_{\mathbf N}(z)G_{N,\mathbf N}(z^p)-pF_{\mathbf N}(z^p)G_{N,\mathbf
N}(z)$ is  
\begin{equation} \label{eq:C} 
C(a+Kp):=\sum_{j=0}^K B_{\mathbf N}(a+jp)B_{\mathbf N}(K-j) 
(H_{N(K-j)}-pH_{Na+Njp}),
\end{equation}
where $B_{\mathbf N}(m)=\frac{(Nm)!^k}{m!^{kN}}$. 
As said above, proving Theorem~\ref{thm:3} turns out to be equivalent to
proving that 
\begin{equation} \label{eq:Ccong} 
C(a+Kp) \in p \Xi_N N!^k\mathbb{Z}_p
\end{equation}
for all primes $p$ and non-negative integers $a$ and $K$ with $0\le a<p$.

There are two cases which can be treated directly:
if $K=a=0$, then $C(0)=0$, and thus \eqref{eq:Ccong} holds trivially,
whereas if $K=0$ and $a=1$, then $C(1)=-pB_{\mathbf
N}(1)H_N=-pN!^kH_N$, and thus \eqref{eq:Ccong} holds by definition of
$\Xi_N$. We therefore assume 
in addition $a+Kp\ge2$ for the remainder of this section.

\medskip
The following lemma will be useful. Its (simple) proof can be found in
\cite[Lemma~4]{kratrivmirror}.

\begin{lem} \label{lem:multinomial/N!}
For all integers $m\ge 1$ and $N\ge 1$, we have 
$$
\frac {(Nm)!} {m!^N} \in N! \,\mathbb{Z}.
$$
\end{lem}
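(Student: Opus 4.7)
The plan is to give a direct combinatorial proof. Consider a set $S$ of $Nm$ distinguishable objects. The multinomial coefficient
$$\binom{Nm}{m,m,\ldots,m} = \frac{(Nm)!}{m!^N}$$
equals the number of ordered $N$-tuples $(A_1, \ldots, A_N)$ of pairwise disjoint $m$-element subsets of $S$ whose union is $S$. The symmetric group $S_N$ acts on such tuples by permuting their entries, and since $m \geq 1$ every block $A_i$ is non-empty, so two distinct orderings of the same unordered collection of blocks yield distinct tuples. Hence the action is free, every orbit has exactly $N!$ elements, and consequently $\frac{(Nm)!}{m!^N\, N!}$ is a non-negative integer (it counts unordered set-partitions of $S$ into $N$ blocks of cardinality $m$). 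Multiplying by $N!$ gives the claim.

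An alternative route would be to verify the divisibility prime by prime via Legendre's formula $v_p(n!) = \sum_{i \geq 1} \lfloor n / p^i \rfloor$. One would then need
$$\sum_{i \geq 1} \Bigl(\lfloor Nm/p^i \rfloor - N \lfloor m/p^i \rfloor - \lfloor N/p^i \rfloor\Bigr) \geq 0,$$
which does not hold term by term and therefore requires a small amount of care (one has to absorb contributions across different values of $i$). The combinatorial argument above is cleaner and more transparent, so I would use it.

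There is no genuine obstacle here. The only fine point in the combinatorial proof is ensuring that the $S_N$-action on ordered block-tuples is free; this fails precisely when some block is empty, i.e.\ when $m = 0$, which is excluded by the hypothesis $m \geq 1$ of the lemma. Consequently I would present the argument in just a few lines.
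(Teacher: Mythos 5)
Your combinatorial argument is correct: the free action of $S_N$ on ordered tuples of pairwise disjoint $m$-element blocks (freeness holding exactly because $m\ge1$ forces all blocks nonempty and hence distinct) shows that $\frac{(Nm)!}{m!^N\,N!}$ counts unordered set-partitions into $N$ blocks of size $m$ and is therefore a non-negative integer. Note that the present paper does not prove the lemma itself; it defers to \cite[Lemma~4]{kratrivmirror}. A common alternative that yields the same conclusion algebraically, and that you may find in the cited source, is the telescoping identity
$$
\frac{(Nm)!}{m!^N}=\prod_{j=1}^{N}\binom{jm}{m}
=\prod_{j=1}^{N}j\binom{jm-1}{m-1}
=N!\prod_{j=1}^{N}\binom{jm-1}{m-1},
$$
where the middle equality uses $\binom{jm}{m}=\frac{jm}{m}\binom{jm-1}{m-1}$, valid for $m\ge1$. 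This gives an explicit integer equal to $\frac{(Nm)!}{m!^N\,N!}$ without appealing to a group action, and it parallels your observation that the $m=0$ degenerate case is genuinely excluded. Both proofs are equally valid and elementary; your remark that the naive Legendre/valuation approach fails term by term (e.g.\ $N=3$, $m=2$, $p=2$, $i=1$ gives $-1$) is also accurate.
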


We deduce in particular that $B_{\mathbf N}(m)\in N!^k\mathbb{Z}$ for
any $m\ge 1$. 

\medskip

Since 
$$
H_J = \sum_{j=1} ^{\fl{J/p}} \frac{1}{pj} + 
\underset{p\nmid j}{\sum_{j=1} ^{J}}\;
\frac{1}{j},
$$
we have
\begin{equation} \label{eq:J} 
pH_{J} \equiv H_{\fl{J/p}} \mod p\mathbb{Z}_p.
\end{equation}
Applying this to $J=Na+Njp$, we get
$$
pH_{Na+Njp}\equiv H_{\lfloor Na/p\rfloor+Nj} \mod p\mathbb{Z}_p.
$$
By Lemma~\ref{lem:multinomial/N!}, we infer
\begin{equation}\label{eq:firstreduction1}
C(a+Kp) \equiv \sum_{j=0}^K B_{\mathbf N}(a+jp)B_{\mathbf N}(K-j)
(H_{N(K-j)}-H_{\lfloor 
Na/p\rfloor+Nj}) \mod pN!^k\mathbb{Z}_p.
\end{equation}
Indeed, if $K\ge 1$ or $a\ge1$, this is because 
$a+jp$ and $K-j$ cannot be simultaneously zero and therefore at least one of 
$B_{\mathbf N}(a+jp)$ or $B_{\mathbf N}(K-j)$ is divisible by 
$N!^k$ by 
Lemma~\ref{lem:multinomial/N!}.  

As long as $v_p(H_N)\le0$, Equation~\eqref{eq:firstreduction1} implies
\begin{equation}\label{eq:firstreduction}
C(a+Kp) \equiv \sum_{j=0}^K B_{\mathbf N}(a+jp)B_{\mathbf N}(K-j)
(H_{N(K-j)}-H_{\lfloor 
Na/p\rfloor+Nj}) \mod p\Xi_NN!^k\mathbb{Z}_p.
\end{equation}
We claim that this congruence holds also for $p$ and $N$ with $v_p(H_N)>0$.

For $p=2$, we observe that 
we have always $v_2(H_N)\le 0$ because of Lemma~\ref{lem:H_L}, so that no
improvement over \eqref{eq:firstreduction1} is needed in this case.
Furthermore, Lemma~\ref{lem:3} together with Remarks~\ref{rem:Xi7}(a) 
in the Introduction tells us that, if $p=3$, we need an improvement only if
$N=22$. To be precise, for $N=22$ we need to show that
\begin{equation}\label{eq:firstreductionU3}
C(a+3K) \equiv \sum_{j=0}^K B_{\mathbf N}(a+3j)B_{\mathbf N}(K-j)
(H_{N(K-j)}-H_{\lfloor 
Na/3\rfloor+Nj}) \mod 3^2 N!^k\mathbb{Z}_3.
\end{equation} 
For $p\ge 5$, we should prove
\begin{equation}\label{eq:firstreductionU}
C(a+Kp) \equiv \sum_{j=0}^K B_{\mathbf N}(a+jp)B_{\mathbf N}(K-j)
(H_{N(K-j)}-H_{\lfloor 
Na/p\rfloor+Nj}) \mod p^3 N!^k\mathbb{Z}_p,
\end{equation}
and if, in addition, $v_p(H_N)\ge3$ and $p$ is a Wolstenholme prime or
if $v_p(H_N)\ge3$ and $p\mid N$
(the reader should recall the definition \eqref{eq:Xi} of $\Xi_N$), then
we need to show the even
stronger assertion that
\begin{equation}\label{eq:firstreductionUW}
C(a+Kp) \equiv \sum_{j=0}^K B_{\mathbf N}(a+jp)B_{\mathbf N}(K-j)
(H_{N(K-j)}-H_{\lfloor 
Na/p\rfloor+Nj}) \mod p^4 N!^k\mathbb{Z}_p.
\end{equation}


In order to see \eqref{eq:firstreductionU3}--\eqref{eq:firstreductionUW},
we begin by combining our assumption $v_p(H_N)>0$ with
Corollary~\ref{cor:C1}, Lemma~\ref{lem:p<L}(1), and \eqref{eq:J}, 
to obtain that
$$
B_{\mathbf N}(a+jp)B_{\mathbf N}(K-j)pH_{Na+Njp} 
\equiv B_{\mathbf N}(a+jp)B_{\mathbf N}(K-j)H_{\fl{Na/p}+Nj} \mod
p^3N!^k\mathbb{Z}_p
$$
as long as $a+Kp\ge2$ and $a\ne0$. Moreover, due to
Lemma~\ref{lem:multinomial/N!} and Corollary~\ref{cor:W2}, the above
congruence is even true if $a=0$
and $p\ge5$.
This implies \eqref{eq:firstreductionU}.
The congruence \eqref{eq:firstreductionU3} follows in the same way by
the slightly weaker assertion for $p=3$ in Corollary~\ref{cor:W2}.

For the congruence \eqref{eq:firstreductionUW}, one needs to combine
Corollary~\ref{cor:C1} and Lemma~\ref{lem:p<L}(4), to see
that
\begin{equation} \label{eq:congW}
B_{\mathbf N}(a+jp)B_{\mathbf N}(K-j)pH_{Na+Njp} 
\equiv B_{\mathbf N}(a+jp)B_{\mathbf N}(K-j)H_{\fl{Na/p}+Nj} \mod
p^4N!^k\mathbb{Z}_p
\end{equation}
as long as $a+Kp\ge2$ and $a\ne0$. 
If $a=0$, then the congruence \eqref{eq:congW}
still holds as long as $j<K$ because of 
Lemma~\ref{lem:multinomial/N!}, Corollary~\ref{cor:W2}, and the fact
that the term $B_{\mathbf N}(K-j)$ contributes at least one factor $p$. 
The only remaining case to be discussed
is $a=0$ and $j=K$.
If we apply the simple observation that
$v_p(B_{\mathbf N}(p^eh))=v_p(B_{\mathbf N}(h))$ for any positive
integers $e$ and $h$
to $B_{\mathbf N}(a+jp)=B_{\mathbf N}(jp)$, then, making again appeal to
Corollary~\ref{cor:C1} and Lemma~\ref{lem:p<L}(4),
we see that \eqref{eq:congW} holds as well
as long as $j=K$ is no prime power. Finally, let $a=0$ and $j=K$ be a prime
power, $j=K=p^f$ say. If $f\ge1$, then we may use Lemma~\ref{lem:W3}
with $J=a+jp=a+Kp=p^{f+1}$ and $v_p(B_{\mathbf N}(p^{f+1}))=v_p(B_{\mathbf
N}(1))=v_p(N!^k)$ to conclude that \eqref{eq:congW} also holds in this case.
On the other hand, if $j=K=1$, then Lemma~\ref{lem:congH}, the
assumption that $p$ is a Wolstenholme prime or that $p$ divides $N$,
the fact that $v_p(B_{\mathbf N}(p))=v_p(B_{\mathbf
N}(1))=v_p(N!^k)$,
altogether yield the congruence \eqref{eq:congW} in this case as well.
\medskip  

We now want to transform the sum on the right-hand side 
of~\eqref{eq:firstreduction} to a more manageable expression. 
In particular, we want to get rid of the floor function $\fl{Na/p}$.
In order to achieve this, we will prove the following lemma
in Section~\ref{sec:4}.

\begin{lem} \label{lem:12}
For any prime $p$, non-negative integers $a$, $j$, $K$ with $0\le
a<p$ and $K\ge1$. If $a\ne1$ or $j\ge1$, we have
\begin{equation} \label{eq:congrconj1}
B_{\mathbf N}(a+pj)
\left(H_{Nj+\lfloor Na/p\rfloor} - H_{Nj}\right) \in p
\Xi_N N!^k\mathbb{Z}_p\ ,
\end{equation}
and, in the case that $a=1$ and $j=0$, we have
\begin{equation} \label{eq:congrconj1a}
B_{\mathbf N}(1)B_{\mathbf N}(K)
H_{\lfloor N/p\rfloor}  \in p
\Xi_N N!^k\mathbb{Z}_p\ ,
\end{equation}
where $\mathbf N=(N,N,\dots,N)$, with $k$ occurrences of $N$. 
\end{lem}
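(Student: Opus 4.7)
Proof proposal.

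The plan is to verify the stated membership in $p\,\Xi_N\,N!^k\,\mathbb Z_p$ purely $p$-adically. Since $\mathbb Z_p$ only registers $p$-adic valuations, the target is
$$v_p\bigl(\text{LHS}\bigr)\ge 1+v_p(\Xi_N)+k\,v_p(N!).$$
The case $a=0$ is immediate because the summation range in $H_{Nj+\lfloor Na/p\rfloor}-H_{Nj}=\sum_{i=1}^{\lfloor Na/p\rfloor}\frac{1}{Nj+i}$ is empty, so I may assume $a\ge 1$ throughout.

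For the first assertion (under the hypothesis $a\ne1$ or $j\ge1$), I would attempt the stronger term-by-term statement that each quotient $B_{\mathbf N}(a+pj)/(Nj+i)$, $1\le i\le\lfloor Na/p\rfloor$, already satisfies the target bound. The key input is Legendre's formula
$$v_p\!\left(\frac{(Nm)!}{m!^N}\right)=\sum_{r\ge1}\lfloor N\{m/p^r\}\rfloor$$
applied at $m=a+pj$: the $r=1$ summand alone equals $\lfloor Na/p\rfloor$, which, multiplied by $k$ and combined with the $N!^k$ supplied by Lemma~\ref{lem:multinomial/N!}, absorbs any denominator $Nj+i$ (whose $p$-adic content is bounded by $\log_p(N(j+1))$ since that denominator lies in an interval of length less than $N$). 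The boundary case $a=1$, $j=0$ is excluded from the first assertion precisely because $B_{\mathbf N}(1)=N!^k$ leaves no room in the $r=1$ Legendre term for the factor $p\,\Xi_N$; the second assertion therefore enlists the additional factor $B_{\mathbf N}(K)$, $K\ge1$, whose own Legendre count at $m=K$ supplies the $p$-adic divisibility needed to absorb the potential pole of $H_{\lfloor N/p\rfloor}$.

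The main obstacle is the fine $p$-adic estimate at those primes $p\le N$ for which $v_p(H_N)\ge 2$: the target then has $v_p(\Xi_N)=\min(2+\xi(p,N),v_p(H_N))\ge 2$, and the bare Legendre-plus-Lemma~\ref{lem:multinomial/N!} count on individual denominators $1/(Nj+i)$ is no longer guaranteed to reach it. The deficit must be recovered from the Wolstenholme-type congruences packaged in Corollary~\ref{cor:C1}, Corollary~\ref{cor:W2}, and (in the most stringent case $v_p(H_N)\ge 3$) Lemma~\ref{lem:congH}, which control cancellations inside the truncated harmonic sum $\sum_{i=1}^{\lfloor Na/p\rfloor}1/(Nj+i)$ rather than on its individual terms. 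Orchestrating these congruences so that the resulting $p$-adic valuation matches exactly the exponent dictated by the definition \eqref{eq:Xi} of $\Xi_N$---no less, but possibly strictly more in the favourable Wolstenholme subcases---is where I expect the bulk of the technical work to lie.
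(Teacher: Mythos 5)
There is a genuine gap: the mechanism you propose for the hard case is not the one the paper uses, and it is not clear it can work.

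You plan a term-by-term bound on $B_{\mathbf N}(a+pj)/(Nj+i)$ via the $\ell=1$ Legendre summand, and then, when $v_p(H_N)\ge 2$ forces a larger target exponent, you propose to recover the deficit from ``Wolstenholme-type congruences\dots which control cancellations inside the truncated harmonic sum.'' But the truncated sum here is $H_{Nj+\lfloor Na/p\rfloor}-H_{Nj}=\sum_{i=1}^{\lfloor Na/p\rfloor}\tfrac{1}{Nj+i}$, a \emph{short} block of length $\lfloor Na/p\rfloor<N$, which is not aligned with complete residue blocks modulo $p$ or $p^2$. Wolstenholme-type statements (Lemma~\ref{lem:W1}, Corollary~\ref{cor:W2}, Lemma~\ref{lem:W3}, Lemma~\ref{lem:congH}) govern sums over full runs $rp-p+1,\dots,rp-1$; they say nothing about such a truncated block, and indeed the paper does \emph{not} invoke them in the proof of Lemma~\ref{lem:12}. (They enter only in the reduction \eqref{eq:firstreduction}--\eqref{eq:firstreductionUW} and in Lemma~\ref{lem:11}.) The actual proof of Lemma~\ref{lem:12} works entirely by Legendre-type valuation estimates: the chain leading to \eqref{eq:ungl7} bounds $v_p(B_{\mathbf N}(a+pj))$ using the quantities $T_1=\max_\epsilon v_p(Nj+\epsilon)$ and $T_2=\fl{\log_p(a+pj)}$ together with \eqref{eq:sumest}, then uses the $p$-adic facts about $H_N$ (Lemmas~\ref{lem:H_L}, \ref{lem:3}, \ref{lem:5}, \ref{lem:p<L}) and, most importantly, a final contradiction analysis (Cases 1--3) to upgrade the a priori bound $v_p\ge v_p(\Xi_NN!^k)$ from \eqref{eq:ungl8} to the required $v_p\ge 1+v_p(\Xi_NN!^k)$. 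That last factor of $p$ is the crux of the proof and is absent from your sketch.

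Two further inaccuracies. First, your claim that the $\ell=1$ Legendre summand $\lfloor Na/p\rfloor$ can be combined freely with the $N!^k$ from Lemma~\ref{lem:multinomial/N!} is not automatic --- one needs to show the two contributions are additive, which is precisely what the estimate $v_p(B_{\mathbf N}(a))\ge\fl{N/p}+v_p(N!^k)$ of Lemma~\ref{lem:B1} establishes, and for $j\ge1$ requires the more delicate Lemma~\ref{lem:B2}. Second, for the case $a=1$, $j=0$ you assert that $B_{\mathbf N}(K)$ ``supplies the $p$-adic divisibility needed to absorb the potential pole of $H_{\lfloor N/p\rfloor}$''; that is not what happens. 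The quantity $H_{\lfloor N/p\rfloor}$ can have arbitrarily negative $p$-adic valuation, and $B_{\mathbf N}(K)$ only guarantees a factor $N!^k$. The paper instead uses \eqref{eq:J} in reverse to replace $H_{\lfloor N/p\rfloor}$ by $pH_N$ modulo a controlled error, and then the definition of $\Xi_N$ (which ensures $H_N\in\Xi_N\mathbb Z_p$) closes the argument; see \eqref{eq:wien2} and the lines following it.
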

 
It follows from Eq.~\eqref{eq:firstreduction} and Lemma~\ref{lem:12} that 
 $$
C(a+Kp) \equiv \sum_{j=0}^K B_{\mathbf N}(a+jp)B_{\mathbf N}(K-j)
\Big(H_{N(K-j)}-H_{Nj}\Big) 
\mod p\Xi_N N!^k\mathbb{Z}_p\ ,
$$
which can be rewritten as 
\begin{equation}\label{eq:beforelemma4.2}
C(a+Kp) \equiv -\sum_{j=0}^K
H_{Nj}\Big(B_{\mathbf N}(a+jp)B_{\mathbf N}(K-j)-B_{\mathbf
N}(j)B_{\mathbf N}(a+(K-j)p)\Big) \mod p\Xi_N N!^k\mathbb{Z}_p\ . 
\end{equation}

We now use a combinatorial lemma due to Dwork
(see~\cite[Lemma~4.2]{dwork}) which provides   
an alternative way to write the sum on the
right-hand side of~\eqref{eq:beforelemma4.2}: namely, we have 
\begin{equation} \label{eq:107}
\sum_{j=0}^K H_{Nj}\Big(B_{\mathbf N}(a+jp)B_{\mathbf
N}(K-j)-B_{\mathbf N}(j)B_{\mathbf N}(a+(K-j)p)\Big) 
=
\sum _{s=0} ^{r}
\sum _{m=0} ^{p^{r+1-s}-1}Y_{m,s},
\end{equation}
where $r$ is such that $K<p^r$, and 
\begin{equation*} 
Y_{m,s}:=\big(H_{Nmp^s}-H_{N\fl{m/p}p^{s+1}}\big)S(a,K,s,p,m),
\end{equation*}
the expression $S(a,K,s,p,m)$ being defined by
$$
S(a,K,s,p,m):=\sum _{j=mp^s} ^{(m+1)p^s-1}\big(B_{\mathbf
N}(a+jp)B_{\mathbf N}(K-j)-B_{\mathbf N}(j)B_{\mathbf N}(a+(K-j)p) 
\big).
$$
In this expression for $S(a,K,s,p,m)$, it is assumed that $B_{\mathbf
N}(n)=0$ for negative integers~$n$. 

\medskip

It would suffice to prove that 
\begin{equation}
\label{eq:yms}
Y_{m,s}\in p\Xi_N N!^k\mathbb Z_p 
\end{equation}
because~\eqref{eq:beforelemma4.2} and \eqref{eq:107}
would then imply that 
$
C(a+Kp) \in  p\Xi_N N!^k\mathbb{Z}_p, 
$
as desired. 

Equation~\eqref{eq:yms} can be proved in the following manner.
The expression for\break $S(a,K,s,p,m)$ is of a form which can be treated by 
Dwork's formal congruence theorem (see~\cite[Theorem~1.1]{dwork}). This 
enables us to get the
following lemma, whose proof is given 
in~\cite{kratrivmirror} and will not be repeated here.
\begin{lem} \label{lem:10}
For all primes $p$ and
non-negative integers $a,m,s,K$ with $0\le a<p$, we have 
\begin{equation}
\label{eq:congruenceS}
S(a,K,s,p,m)\in p^{s+1} B_{\mathbf N}(m) \mathbb{Z}_p.
\end{equation}
\end{lem}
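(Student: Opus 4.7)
The plan is to derive Lemma~\ref{lem:10} as an application of Dwork's formal congruence theorem \cite[Theorem~1.1]{dwork} to the hypergeometric-type sequence $B_{\mathbf N}(n)=(Nn)!^k/n!^{kN}$; the combinatorial decomposition \eqref{eq:107} was engineered precisely so that each inner sum $S(a,K,s,p,m)$ is of the shape to which that theorem applies. Dwork's theorem for such sequences provides the refined congruence
$$B_{\mathbf N}(n+\ell p^{s+1})\,B_{\mathbf N}\!\bigl(\lfloor n/p\rfloor\bigr) \equiv B_{\mathbf N}(n)\,B_{\mathbf N}\!\bigl(\lfloor n/p\rfloor+\ell p^{s}\bigr)\pmod{p^{s+1}\,B_{\mathbf N}\!\bigl(\lfloor n/p\rfloor\bigr)\,\mathbb Z_p},$$
valid for all non-negative integers $n$, $\ell$, and $s$.

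I would first reindex by setting $j=mp^{s}+i$ with $0\le i<p^{s}$, so that
$$S(a,K,s,p,m)=\sum_{i=0}^{p^{s}-1}\Bigl(B_{\mathbf N}(a+ip+mp^{s+1})\,B_{\mathbf N}(K-mp^{s}-i)-B_{\mathbf N}(mp^{s}+i)\,B_{\mathbf N}(a+(K-mp^{s}-i)p)\Bigr).$$
The Dwork congruence applied with $(n,\ell)=(a+ip,m)$ (noting $\lfloor(a+ip)/p\rfloor=i$) rewrites $B_{\mathbf N}(a+ip+mp^{s+1})$ as $B_{\mathbf N}(a+ip)\,B_{\mathbf N}(i+mp^{s})/B_{\mathbf N}(i)$, up to an error lying in $p^{s+1}B_{\mathbf N}(i)\mathbb Z_p$. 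A parallel application with $(n,\ell)=(i,m)$ rewrites $B_{\mathbf N}(mp^{s}+i)$ in terms of $B_{\mathbf N}(i)$ and $B_{\mathbf N}(\lfloor i/p\rfloor +mp^{s-1})$, which, iterated or handled by induction on $s$, distils the common factor $B_{\mathbf N}(m)$. After these substitutions the two products inside the summand become identical, and the summand collapses to the Dwork error terms. Each such error carries a factor of $p^{s+1}$ and, after the common $B_{\mathbf N}(m)$ is extracted, the desired containment $S(a,K,s,p,m)\in p^{s+1}B_{\mathbf N}(m)\mathbb Z_p$ follows.

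The main obstacle will be the bookkeeping of the two Dwork expansions and the verification that their error terms combine cleanly into a single factor divisible by $p^{s+1}B_{\mathbf N}(m)$, rather than by a smaller multiple. One must also attend to the boundary situations where $K-mp^{s}-i<0$ (using the convention $B_{\mathbf N}(n)=0$ for $n<0$) and where $i=0$ causes $\lfloor i/p\rfloor$ to vanish, so that the iteration of the Dwork congruence terminates at the correct level. An induction on $s$, with the base case $s=0$ handled by the classical Dwork congruence modulo~$p$, appears to be the cleanest route; this essentially follows the chain of arguments carried out in the companion paper~\cite{kratrivmirror}, where the lemma is established.
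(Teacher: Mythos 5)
Your high-level identification is right and matches the paper's treatment exactly: the paper also invokes Dwork's formal congruence theorem \cite[Theorem~1.1]{dwork} and then refers to the companion paper \cite{kratrivmirror} for the detailed verification that the sequence $B_{\mathbf N}$ satisfies Dwork's hypotheses and that Lemma~\ref{lem:10} is a consequence. So the tool is correct, and the observation that the decomposition \eqref{eq:107} from Dwork's Lemma~4.2 was engineered to make the theorem applicable is exactly the point.

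However, your attempt to flesh out the mechanism by substituting the basic Dwork congruence term by term has a genuine gap. After you divide the congruence
$B_{\mathbf N}(a+ip+mp^{s+1})\,B_{\mathbf N}(i) \equiv B_{\mathbf N}(a+ip)\,B_{\mathbf N}(i+mp^{s}) \pmod{p^{s+1}B_{\mathbf N}(i)\mathbb{Z}_p}$
by $B_{\mathbf N}(i)$, the error sits only in $p^{s+1}\mathbb{Z}_p$, not (as you state) in $p^{s+1}B_{\mathbf N}(i)\mathbb{Z}_p$; and when you then multiply that error by $B_{\mathbf N}(K-mp^s-i)$ and sum over $i$, there is no reason the result should land in $p^{s+1}B_{\mathbf N}(m)\mathbb{Z}_p$ — the needed common factor $B_{\mathbf N}(m)$ does not fall out of a naive term-by-term replacement. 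Likewise, the ``parallel'' application to $B_{\mathbf N}(mp^s+i)$ must be run at exponent $s-1$ rather than $s$, so the iterated moduli decrease and the induction does not close at the strength you need. This is precisely why Dwork's Theorem~1.1 is a nontrivial result: its proof is a more structured induction on $s$ that keeps the factor $B_{\mathbf N}(m)$ and the full $p^{s+1}$ under control simultaneously, rather than iterating the basic congruence. You acknowledge the bookkeeping as the main obstacle, but the resolution is not a matter of care alone — it requires invoking the full theorem as a black box (which is what \cite{kratrivmirror} does), not re-deriving it from the one-step congruence.
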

Furthermore, in Section~\ref{sec:6} we shall prove the following lemma.
\begin{lem} \label{lem:11}
For all primes $p$, non-negative integers $m$, 
and positive integers $N$, we have
\begin{equation} 
\label{eq:110}
B_{\mathbf N}(m)\big(H_{Nmp^s}-H_{N\fl{m/p}p^{s+1}}\big)\in p^{-s}
\Xi_N N!^k\mathbb Z_p\ ,
\end{equation}
where again $\mathbf N=(N,N,\dots,N)$, with $k$ occurrences of $N$. 
\end{lem}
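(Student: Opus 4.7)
\emph{Reductions.} If $m=0$, both harmonic numbers vanish. Writing $m=pm'+a$ with $0\le a\le p-1$, the case $a=0$ also kills the difference since $Nmp^s=Nm'p^{s+1}$. We therefore assume throughout $m=pm'+a$ with $1\le a\le p-1$, so that $\lfloor m/p\rfloor=m'$ and $B_{\mathbf N}(m)\in N!^k\mathbb Z$ by Lemma~\ref{lem:multinomial/N!}.

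\emph{Key decomposition.} In $H_{Nmp^s}-H_{Nm'p^{s+1}}=\sum_{i=Nm'p^{s+1}+1}^{Nmp^s}i^{-1}$, isolate the indices with $p^s\mid i$ (exactly $i=p^sj$ with $Nm'p<j\le Nm$) and group the remaining terms by $t:=v_p(i-Nm'p^{s+1})\in\{0,\dots,s-1\}$ (writing $i-Nm'p^{s+1}=p^tu$, $p\nmid u$). This yields
\[
H_{Nmp^s}-H_{Nm'p^{s+1}}=\frac{H_{Nm}-H_{Nm'p}}{p^s}+\sum_{t=0}^{s-1}\frac{W_t}{p^t},\qquad W_t:=\sum_{\substack{p\nmid u\\1\le u\le Nap^{s-t}}}\frac{1}{Nm'p^{s+1-t}+u}\in\mathbb Z_p.
\]
Since $v_p(\Xi_N)\le 3$ by~\eqref{eq:Xi}, the lemma reduces to the two sub-claims (a)~$B_{\mathbf N}(m)(H_{Nm}-H_{Nm'p})\in\Xi_N N!^k\mathbb Z_p$, and (b)~$v_p(W_t)\ge 2$ for every $t\in\{0,\dots,s-1\}$ when $p\ge 5$; the remaining small primes are handled separately, since Lemma~\ref{lem:H_L} forces $v_2(\Xi_N)\le 0$ (so no improvement is needed at $p=2$), while Lemma~\ref{lem:3} together with Remarks~\ref{rem:Xi7}(a) reduces $p=3$ to the single case $N=22$, done directly.

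\emph{Sub-claim~(b).} Because $s+1-t\ge 2$, the geometric expansion $(Nm'p^{s+1-t}+u)^{-1}=u^{-1}\sum_{\ell\ge 0}(-Nm'p^{s+1-t}/u)^\ell$ reduces $W_t$ modulo high powers of $p$ to sums of the form $\sum_{p\nmid u,\,1\le u\le Nap^{s-t}}u^{-r}$, $r\ge 1$. Since $s-t\ge 1$ makes the range $Nap^{s-t}$ a positive multiple of $p$, each such sum decomposes into full blocks $\{lp+1,\dots,lp+p-1\}$, and Corollary~\ref{cor:W2} then provides the required estimate $v_p\ge 2$ via Wolstenholme's theorem.

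\emph{Sub-claim~(a), the main obstacle.} This is the core $s=0$ statement. In the base case $m'=0$, $a=1$ it is the tautology $v_p(H_N)\ge v_p(\Xi_N)$ built into~\eqref{eq:Xi}, since $B_{\mathbf N}(1)=N!^k$. For general $(m',a)$, splitting $H_{Nm}-H_{Nm'p}=\sum_{j=1}^{Na}(Nm'p+j)^{-1}$ according to $v_p(j)$ separates a Wolstenholme-block part (again controlled by Corollary~\ref{cor:W2}) from the contribution $p^{-1}(H_{Nm'+\lfloor Na/p\rfloor}-H_{Nm'})$ coming from the terms with $p\mid j$; the latter is absorbed into $\Xi_N N!^k\mathbb Z_p$ by combining Corollary~\ref{cor:C1} (to bring out the factor $H_N$) with Lemma~\ref{lem:p<L} (to extract the necessary $p$-divisibility from $B_{\mathbf N}(m)$), supplemented by Lemmas~\ref{lem:congH} and~\ref{lem:W3} in the boundary cases $v_p(H_N)\ge 3$ with $p\mid N$ or $p$ Wolstenholme, which capture the extra unit of valuation encoded by $\xi(p,N)$ in~\eqref{eq:Xi}. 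The main obstacle lies precisely here: sharpening the estimate from the denominator $\Th_N$ of \cite[Theorem~2]{kratrivmirror} all the way down to $\Xi_N$ forces all these auxiliary ingredients to be used jointly, in a pattern that closely mirrors the argument around~\eqref{eq:firstreductionUW} in Section~\ref{sec:2}.
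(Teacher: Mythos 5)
Your decomposition of the harmonic-number difference is sound, and your treatment of the terms $W_t$ with $t<s$ via Wolstenholme (Lemma~\ref{lem:W1}/Corollary~\ref{cor:W2}) is a legitimate alternative to the paper's route, which instead compensates for these terms by the stronger lower bound on $v_p(B_{\mathbf N}(m))$ coming from Corollary~\ref{cor:C1} together with Lemma~\ref{lem:p<L}. But sub-claim~(a), which you rightly flag as the crux, is not actually proved, and the sketch is wrong on two counts.

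First, the ``Wolstenholme-block part'' $\sum_{p\nmid j,\,1\le j\le Na}(Nm'p+j)^{-1}$ does \emph{not} decompose into full blocks $\{lp+1,\dots,lp+p-1\}$: since $1\le a\le p-1$ the upper limit $Na$ is a multiple of $p$ only when $p\mid N$, and otherwise the last, incomplete block contributes a quantity of valuation $0$. So Corollary~\ref{cor:W2} gives nothing here. (Your sub-claim~(b) is genuinely different: there $Nap^{s-t}$ is a multiple of $p$ because $s-t\ge1$.) The $p\nmid j$ piece must instead be absorbed by the valuation of $B_{\mathbf N}(m)$ itself, via Corollary~\ref{cor:C1} and Lemma~\ref{lem:p<L}.

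Second, and more seriously, the $p\mid j$ contribution $p^{-1}\bigl(H_{Nm'+\lfloor Na/p\rfloor}-H_{Nm'}\bigr)$ cannot be absorbed merely by citing Corollary~\ref{cor:C1}, Lemma~\ref{lem:p<L}, and Lemmas~\ref{lem:congH}/\ref{lem:W3}: those give a fixed lower bound on $v_p(B_{\mathbf N}(m))$, whereas $v_p\bigl(H_{Nm'+\lfloor Na/p\rfloor}-H_{Nm'}\bigr)$ can be arbitrarily negative as $m'$ grows, so a fixed bound cannot cover it. The ingredient that controls this piece is Lemma~\ref{lem:12}, which states $B_{\mathbf N}(a+pm')\bigl(H_{Nm'+\lfloor Na/p\rfloor}-H_{Nm'}\bigr)\in p\Xi_N N!^k\mathbb Z_p$ and whose proof in Section~\ref{sec:4} is where the real work of Lemma~\ref{lem:11} is hidden. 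Your proposal never invokes Lemma~\ref{lem:12} at all, and Lemmas~\ref{lem:congH} and~\ref{lem:W3}, which you cast as the decisive tools, are in fact not used in the paper's proof of Lemma~\ref{lem:11} — they enter only for the congruence~\eqref{eq:firstreductionUW} in Section~\ref{sec:2}. In short, the correct allocation is Lemma~\ref{lem:12} for the $p\mid j$ piece and the $v_p(B_{\mathbf N}(m))$ bound for the $p\nmid j$ piece; your sketch has the two roles reversed and imports irrelevant auxiliary lemmas in place of the essential one.
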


It is clear that~\eqref{eq:congruenceS} and~\eqref{eq:110}
imply~\eqref{eq:yms}. 
This completes the outline of the proof of Theorem~\ref{thm:3}.

\section{Proof of Lemma~\ref{lem:12}} \label{sec:4}

The assertion is trivially true if $\lfloor Na/p\rfloor=0$, that is,
if $0\le a<p/N$. We may hence assume that $p/N\le a<p$ from now on.

\subsection{First part: a weak version of Lemma~\ref{lem:12}}
From \cite[Eq.~(9.1)]{kratrivmirror} with 
$\mathbf N=(N,N,\dots,\break N)$, we know that
\begin{equation} \label{eq:congrconj2}
B_{\mathbf N}(a+pj)\left(H_{Nj+\lfloor Na/p\rfloor} - H_{Nj}\right) \in
p\mathbb{Z}_p\ .
\end{equation}
(The reader should note the absence of the term
$\Xi_N N!^k$ in comparison with \eqref{eq:congrconj1} or 
\eqref{eq:congrconj1a}.)
In \cite[Sec.~9.1]{kratrivmirror}, we have also shown that
for any 
$$D\le 1+\max_{1\le\ep\le \lfloor Na/p\rfloor}v_p(Nj+\ep)$$ 
we have
\begin{equation} \label{eq:sumest} 
\sum _{\ell =2} ^{D}\left(\fl{\frac {N(a+pj)} {p^\ell }}-
N\fl{\frac {a+pj} {p^\ell }}\right)\ge D-1,
\end{equation}
a fact that we shall use later on.

\medskip
We now embark on the proof of the lemma in its full form.

\subsection{Second part: the case $j=0$}
In this case, we want to prove that, for $a\ge2$, we have
\begin{equation} \label{eq:congrconj4} 
B_{\mathbf N}(a)
H_{\lfloor Na/p\rfloor} \in p\Xi_N N!^k\mathbb{Z}_p\ ,
\end{equation}
respectively that
\begin{equation} \label{eq:congrconj4a} 
B_{\mathbf N}(1)B_{\mathbf N}(K)
H_{\lfloor N/p\rfloor} \in p\Xi_N N!^k\mathbb{Z}_p\ .
\end{equation}
The reader should keep in mind that we still assume
that $p/N\le a<p$, so that, in particular, $a>0$. 

If $p>N$, then our claim would follow from
$B_{\mathbf N}(a)H_{\lfloor Na/p\rfloor} \in
p\mathbb{Z}_p$, 
which is indeed true because of \eqref{eq:congrconj2} with $j=0$.

Now let $p\le N$. If $a=1$, then use of \eqref{eq:J} in the other
direction yields
$$
B_{\mathbf N}(1)B_{\mathbf N}(K)H_{\lfloor N/p\rfloor}\equiv
pN!^kB_{\mathbf N}(K)H_{N}\mod pN!^kB_{\mathbf N}(K)\mathbb Z_p\ .
$$
By the assumption that $K\ge1$ and 
Lemma~\ref{lem:multinomial/N!}, this congruence implies
\begin{equation} \label{eq:wien2} 
B_{\mathbf N}(1)B_{\mathbf N}(K)H_{\lfloor N/p\rfloor}\equiv
pN!^kB_{\mathbf N}(K)H_{N}\mod pN!^{k+1}\mathbb Z_p\ .
\end{equation}
It is easy to see that
$$
v_p(N!)\ge\begin{cases} 
1,&\text{if }\frac {N} {3}<p\le N,\\
3,&\text{if }p\le \frac {N} {3}.
\end{cases}
$$
We have $v_p(H_N)=-1$ as long as $\frac {N} {3}<p\le N$ (for $p=2$ this
follows from Lemma~\ref{lem:H_L}), except if $p=3$ and $N=7$, in which case we have
$v_3(H_7)=v_p(\frac {363} {140})=1$.
From the definition
of $\Xi_N$ in \eqref{eq:Xi} we see that \eqref{eq:wien2} implies
$$
B_{\mathbf N}(1)B_{\mathbf N}(K)H_{\lfloor N/p\rfloor}\equiv
pN!^kB_{\mathbf N}(K)H_{N}\mod p\Xi_NN!^{k}\mathbb Z_p\ .
$$
Again by the definition of $\Xi_N$, we have
$H_{N}\in \Xi_N\mathbb Z_p$, and, thus,
$$
B_{\mathbf N}(1)B_{\mathbf N}(K)H_{\lfloor N/p\rfloor}\in
 p\Xi_NN!^{k}\mathbb Z_p\ ,
$$
which is exactly \eqref{eq:congrconj4a}.

\medskip
From now on, we assume $a\ge2$.
We bound the $p$-adic valuation of the
expression on the left in \eqref{eq:congrconj4} from below:
\begin{align} \notag
v_p\big(B_{\mathbf N}(a)H_{\fl{Na/p}}\big)
&=
k\sum _{\ell =1} ^{\infty}\left(\fl{\frac {Na} {p^\ell }}-
N\fl{\frac {a} {p^\ell }}\right)+
v_p(H_{\fl{Na/p}})\notag\\
&\ge k\sum _{\ell =1} ^{\infty}\fl{\frac {Na} {p^\ell }}
-\fl{\log_p Na/p}\notag\\
&\ge k\sum _{\ell =1} ^{\infty}\fl{\frac {2N} {p^\ell }}
-\fl{\log_p N}\notag\\
&\ge\fl{\frac {N} {p}}+
k\sum _{\ell =1} ^{\infty}\fl{\frac {N} {p^\ell }}
-\fl{\log_p N}
\label{eq:unglA0}\\
&\ge\fl{N/p}+
k\cdot v_p(N!)
-\fl{\log_p N}.
\label{eq:unglA}
\end{align}
If $p=2$, then we can continue the estimation \eqref{eq:unglA} as
\begin{equation} \label{eq:unglB} 
v_2\big(B_{\mathbf N}(a)H_{\fl{Na/2}}\big)\ge 
1+k\cdot v_2(N!)
-\fl{\log_2 N}=
v_2\big(2\Xi_N N!^k\big),
\end{equation}
where we used Lemma~\ref{lem:H_L} and the definition of $\Xi_N$ 
to obtain the equality.
(In fact, at this point it was not necessary to consider the case
$p=2$ because $a<p$ and because we assumed $a\ge2$. However, we shall
re-use the present estimations later in the third part of the current
proof, in a context where $a=1$ is allowed.)

From now on let $p\ge3$.
We use the fact that
\begin{equation} \label{eq:log}
\fl x\ge\fl{\log_px}+2 
\end{equation}
for all $x\ge2$ and primes $p\ge3$.
If we suppose that  $v_p(H_N)\le 0$, 
or that $p=3$ and $N=7$, then $v_p(\Xi_N)\le0$. 
(The reader should recall that $\Xi_N$ has a separate definition for $N=7$,
Remarks~\ref{rem:Xi7}(a) in the Introduction.)
Hence, in the case that $N\ge2p$,
the estimation \eqref{eq:unglA} can be continued as
$$
v_p\big(B_{\mathbf N}(a)H_{\fl{Na/p}}\big)
\ge 
2+\fl{\log_p N/p}+
k\cdot v_p(N!)
-\fl{\log_p N}
\ge 
v_p(pN!^k)\ge v_p(p\Xi_NN!^k),
$$
implying \eqref{eq:congrconj4} in this case.
If $p\le N<2p$, in which case $v_p(H_N)=-1$, 
the estimation \eqref{eq:unglA} can be continued as
\begin{align*}
v_p\big(B_{\mathbf N}(a)H_{\fl{Na/p}}\big)
&\ge 
1+
k\cdot v_p(N!)
-1=v_p(p\Xi_N N!^k),
\end{align*}
implying \eqref{eq:congrconj4} in this case also.

Now let  $v_p(H_N)> 0$,
but not $p=3$ and $N=7$. (The latter case was already discussed.)
We claim that under this condition we have
\begin{equation} \label{eq:logungl} 
\fl{N/p}-\fl{\log_p N/p}\ge
\begin{cases} 
4,&\text{if }p=5,\\
5,&\text{if }p>5,\text{ or if $p=3$ and $N=22$},\\
6,&\text{if }v_p(H_N)>2.\\
\end{cases}
\end{equation}
Indeed, if $p=3$, then Lemma~\ref{lem:3}
tells us that only the case
$N=22$ needs to be considered, in which case 
the above inequality is trivially true.
If $p=5$ then Lemma~\ref{lem:5} says that $N$ must be at
least $20$ because otherwise $v_5(H_N)\le 0$. The inequality
\eqref{eq:logungl} follows immediately. On the other hand, if $p=11$
then it can be checked 
(by our table mentioned in Remarks~\ref{rem:Xi7}(c) in the
Introduction, for example) that $N$ must be at least $77$ because otherwise
$v_{11}(H_N)\le 0$. If $p$ is different from $3$, $5$, and $11$, 
then, according to
Lemma~\ref{lem:p<L}(3), we must have $N/p\ge5$. In both of the latter
cases, the inequality \eqref{eq:logungl} follows immediately.
Finally, if $v_p(H_N)>2$, then, according to
Lemmas~\ref{lem:5} and \ref{lem:p<L}(4), we must have 
$p>5$ and $N/p\ge6$, thus implying \eqref{eq:logungl}.

The effect of the above improvement over \eqref{eq:log} is that the
estimation \eqref{eq:unglA} may now be continued to result in
$$
B_{\mathbf N}(a)H_{\lfloor Na/p\rfloor} 
\in \begin{cases} p^3N!^k\mathbb{Z}_p&\text{if }p=5,\\
p^5N!^k\mathbb{Z}_p&\text{if }v_p(H_N)>2,\\
p^4N!^k\mathbb{Z}_p&\text{otherwise.}\end{cases}
$$
This implies \eqref{eq:congrconj4} since $v_5(H_N)\le 1$ for $N\ge 5$ 
according to Lemma~\ref{lem:5}.

Thus, \eqref{eq:congrconj1} with $j=0$ is established.

\subsection{Third part: the case $j>0$}
Now let $j>0$.
If $p>N$, then \eqref{eq:congrconj1} reduces to 
\begin{equation*}
B_{\mathbf N}(a+pj)
\left(H_{Nj+\lfloor Na/p\rfloor} - H_{Nj}\right) \in
p\mathbb{Z}_p\ ,
\end{equation*}
which is 
again true because of \eqref{eq:congrconj2}.

Now let $p\le N$. The reader should keep in mind that we still assume
that $p/N\le a<p$, so that, in particular, $a>0$. 
In a similar way as we did for the expression on the left in
\eqref{eq:congrconj2}, we bound the $p$-adic valuation of the
expression on the left in \eqref{eq:congrconj1} from below. For the sake of
convenience, we write $T_1$ for $\max_{1\le\ep\le \lfloor
Na/p\rfloor}v_p(Nj+\ep)$ and $T_2$ for $\fl{\log_p (a+pj)}$. 
Since it is somewhat hidden where our assumption $j>0$ enters the
subsequent considerations, we point out to the reader that
$j>0$ implies that $T_2\ge1$; without this property the split of
the sum over $\ell$ into subsums in the 
chain of inequalities below would be impossible.
So, using the above notation, we have (the detailed explanations for the
various steps are given immediately after the 
following chain of estimations)
{\allowdisplaybreaks
\begin{align} \notag
v_p\Big(&B_{\mathbf N}(a+pj)\left(H_{Nj+\lfloor Na/p\rfloor} -
H_{Nj}\right)\Big)\\ 
\notag
&=
k\sum _{\ell =1} ^{\infty}\left(\fl{\frac {N(a+pj)}
{p^\ell }}- 
N\fl{\frac {a+pj} {p^\ell }}\right)+
v_p\big(H_{Nj+\lfloor Na/p\rfloor} - H_{Nj}\big)\\
&=
\fl{\frac {N(a+pj)} {p }}-
N\fl{\frac {a+pj} {p }}
+
\sum _{\ell =2} ^{\min\{1+T_1,T_2\}}
\left(\fl{\frac {N(a+pj)} {p^\ell }}-
N\fl{\frac {a+pj} {p^\ell }}\right)
\notag\\
\notag
&\kern1cm
+
\sum _{\ell =\min\{1+T_1,T_2\}+1} ^{\infty}
\left(\fl{\frac {N(a+pj)} {p^\ell }}-
N\fl{\frac {a+pj} {p^\ell }}\right)\\
\notag
&\kern1cm
+
(k-1)\sum _{\ell =1} ^{\infty}\left(\fl{\frac {N(a+pj)}
{p^\ell }}- 
N\fl{\frac {a+pj} {p^\ell }}\right)
+v_p\big(H_{Nj+\lfloor Na/p\rfloor} - H_{Nj}\big)
\notag
\\
\notag
&\ge
\fl{\frac {Na} {p }}-N\fl{\frac {a} {p }}+\min\{1+T_1,T_2\}-1
+
k\sum _{\ell =T_2+1} ^{\infty}
\left(\fl{\frac {N(a+pj)} {p^\ell }}-
N\fl{\frac {a+pj} {p^\ell }}\right)\\
&\kern1cm
+v_p\big(H_{Nj+\lfloor Na/p\rfloor} - H_{Nj}\big)
\label{eq:ungl1}
\\
&\ge
\fl{\frac {Na} {p }}
+T_1+v_p\big(H_{Nj+\lfloor Na/p\rfloor} - H_{Nj}\big)
+\min\{0,T_2-T_1-1\}
\notag\\
&\kern1cm
+
k\sum _{\ell =\fl{\log_p(a+pj)}+1}
^{\infty}\left(\fl{\frac {N(a+pj)} {p^\ell }}- 
N\fl{\frac {a+pj} {p^\ell }}\right)
\label{eq:ungl2}\\
&\ge 
\fl{N/p}
+\min\{0,T_2-T_1-1\}
+
k\sum _{\ell =1} ^{\infty}\fl{\frac {N} {p^\ell
}\cdot\frac {a+pj} {p^{\fl{\log_p(a+pj)}}}}
\label{eq:ungl3}\\
&\ge 
\fl{N/p}+\fl{\log_p (a+pj)}
-\fl{\log_p\big(Nj+\lfloor Na/p\rfloor\big)}-1
+
k\sum _{\ell =1} ^{\infty}\fl{\frac {N} {p^\ell
}\cdot\frac {a+pj} {p^{\fl{\log_p(a+pj)}}}}
\label{eq:ungl4}
\\
&\ge 
\fl{N/p}+\fl{\log_p j}
-\fl{\log_p\big(Nj+\lfloor Na/p\rfloor\big)}
+
k\sum _{\ell =1} ^{\infty}\fl{\frac {N} {p^\ell }}
\label{eq:ungl5}
\\
&\ge 
\fl{N/p}+\fl{\log_p j}
-\fl{\log_p N}-\fl{\log_p\left(j+\frac {1} {N}\lfloor Na/p\rfloor\right)}-1
+
k\cdot v_p(N!)
\label{eq:ungl6}\\
&\ge 
\fl{N/p}
-\fl{\log_p N}-1+v_p(N!^k).
\label{eq:ungl7}
\end{align}
}%
Here, we used \eqref{eq:sumest} in order to get \eqref{eq:ungl1}.
To get \eqref{eq:ungl3}, we used the inequalities
\begin{equation} \label{eq:ungl6a} 
\fl{\frac {Na} {p}}\ge\fl{\frac {N} {p}}
\end{equation}
and 
\begin{equation} \label{eq:ungl100} 
T_1+v_p\big(H_{Nj+\lfloor Na/p\rfloor} - H_{Nj}\big)\ge0.
\end{equation}
To get \eqref{eq:ungl4}, we used that
$$T_2-T_1-1\ge \fl{\log_p (a+pj)}
-\fl{\log_p\big(Nj+\lfloor Na/p\rfloor\big)}-1$$
and
$$
\fl{\log_p (a+pj)}
-\fl{\log_p\big(Nj+\lfloor Na/p\rfloor\big)}-1=
\fl{\log_p j}-\fl{\log_p\big(Nj+\lfloor Na/p\rfloor\big)}
\le 0,
$$
so that
\begin{equation} \label{eq:ungl6b}
\min\{0,T_2-T_1-1\}\ge 
\fl{\log_p (a+pj)}
-\fl{\log_p\big(Nj+\lfloor Na/p\rfloor\big)}-1.
\end{equation}
Next, to get \eqref{eq:ungl5}, we used
\begin{equation} \label{eq:ungl6d}
\fl{\frac {N} {p^\ell }\cdot\frac {a+pj}
{p^{\fl{\log_p(a+pj)}}}}\ge
\fl{\frac {N} {p^\ell }}.
\end{equation}
To get \eqref{eq:ungl6}, we used
\begin{equation} \label{eq:ungl6c}
\fl{\log_p\big(Nj+\lfloor Na/p\rfloor\big)}
\le\fl{\log_p N}+\fl{\log_p\left(j+\frac {1} {N}\lfloor Na/p\rfloor\right)}+1 .
\end{equation}
Finally, we used $\frac {1} {N}\lfloor
Na/p\rfloor<1$ in order to get
\eqref{eq:ungl7}. 

If we now repeat the arguments after \eqref{eq:unglA},
then we see that the estimation \eqref{eq:ungl7} implies
\begin{equation} \label{eq:ungl8} 
v_p\Big(B_{\mathbf N}(a+pj)\left(H_{Nj+\lfloor Na/p\rfloor} - H_{Nj}\right)\Big)
\ge
v_p\big(\Xi_N N!^k\big).
\end{equation}
This almost proves \eqref{eq:congrconj1}, our lower
bound on the $p$-adic valuation of the number in \eqref{eq:congrconj1}
is just by $1$ too low. 

In order to establish that \eqref{eq:congrconj1} is indeed correct, we
assume by contradiction that all the inequalities in the estimations
leading to \eqref{eq:ungl7} and finally to \eqref{eq:ungl8} 
are in fact equalities. 
In particular, the estimations in \eqref{eq:ungl6a} hold with equality only
if $a=1$, which we shall assume henceforth.

If we examine the arguments after \eqref{eq:unglA} that led us from
\eqref{eq:ungl7} to \eqref{eq:ungl8}, then we see that they prove in
fact 
\begin{equation} \label{eq:ungl11} 
v_p\Big(B_{\mathbf N}(a+pj)\left(H_{Nj+\lfloor Na/p\rfloor} -
H_{Nj}\right)\Big) 
\ge
1+v_p\big(\Xi_N N!^k\big)
\end{equation}
except if $v_p(H_N)\le 0$ and:

\bigskip
{\sc Case 1:} $p=2$ and $\fl{N/2}=1$;

{\sc Case 2:} $p\ge3$ and $p\le N<2p$;

{\sc Case 3:} $p=3$ and $\fl{N/3}=2$;

\bigskip

\noindent
Indeed, if $N\ge 3p$, this is obvious while,
 if $2p\le N<3p$, we have $v_p(H_N)=-1$, except if $p=3$ and $N=7$, a case included
in Case~3. Therefore,
if we exclude the case where $p=3$ and $N=7$, then, if $2p\le N<3p$,
we can continue \eqref{eq:ungl7} as
\begin{multline*}
v_p\Big(B_{\mathbf N}(a+pj)\left(H_{Nj+\lfloor Na/p\rfloor} -
H_{Nj}\right)\Big)\\
\ge 
2+\fl{\log_p N/p}-\fl{\log_p N}-1+ v_p(N!^k)
\ge 
v_p(N!^k)\ge v_p(p\Xi_NN!^k),
\end{multline*}
where we used \eqref{eq:log} in the first step. This is exactly \eqref{eq:ungl11}.

We now show that \eqref{eq:ungl11} holds as well in Cases~1--3, thus
completing the proof of \eqref{eq:congrconj1}.

\medskip
{\sc Case 1}. Let first $p=2$ and $N=2$. We then have
\begin{align*}
\min\{0,T_2-T_1-1\}&=\min\{0,\fl{\log_2(2j+1)}-v_2(2j+1)-1\}\\
&=
\min\{0,\fl{\log_2(2j+1)}-1\}=0>-1,
\end{align*}
in contradiction to having equality in \eqref{eq:ungl6b}.

On the other hand, if $p=2$ and $N=3$, we have
$$H_{Nj+\lfloor Na/p\rfloor} - H_{Nj}
=H_{3j+1} - H_{3j}=\frac {1} {3j+1}.$$
If there holds equality in \eqref{eq:ungl6b}, then $Nj+\fl{Na/p}=3j+1$ 
must be a power of $2$, say $3j+1=2^e$ or, equivalently, 
$j=(2^e-1)/3$. It follows that
$$
\fl{\frac {N} {p }\cdot\frac {a+pj}
{p^{\fl{\log_p(a+pj)}}}}
=\fl{\frac {3} {2 }\cdot\frac {1+2j}
{2^{\fl{\log_2(1+2j)}}}}
=\fl{\frac {3} {2 }\cdot\frac {2^{e+1}+1}
{3\cdot 2^{e-1}}}
=2>1=\fl{\frac {3} {2}}=\fl{\frac {N} {p}},
$$
in contradiction to having equality in \eqref{eq:ungl6d} with $\ell=1$.

\medskip
{\sc Case 2}. Our assumptions $p\ge3$ and $p\le N<2p$ imply 
$$
H_{Nj+\lfloor Na/p\rfloor} - H_{Nj}
=H_{Nj+1} - H_{Nj}=\frac {1} {Nj+1}.
$$
Arguing as in the previous case, in order to have equality in 
\eqref{eq:ungl6b}, we must have $Nj+1=f\cdot p^e$ for some positive
integers $e$ and $f$ with $0<f<p$. Thus, $j=(f\cdot p^e-1)/N$ and
$p<N$. (If $p=N$ then $j$ would be non-integral.) It follows that
\begin{equation} \label{eq:ungl12}
\fl{\frac {N} {p }\cdot\frac {a+pj}
{p^{\fl{\log_p(a+pj)}}}}=
\fl{\frac {N} {p }\cdot\frac {f\cdot p^{e+1}+N-p}
{N\cdot p^{\fl{\log_p((f\cdot p^{e+1}+N-p)/N)}}}}. 
\end{equation}
If $f=1$, then we obtain from \eqref{eq:ungl12} that
$$
\fl{\frac {N} {p }\cdot\frac {a+pj}
{p^{\fl{\log_p(a+pj)}}}}=
\fl{\frac {N} {p }\cdot\frac {p^{e+1}+N-p}
{N\cdot p^{e-1}}}
\ge\fl{\frac {p^{e+1}+N-p} {p^{e}}}
>1=\fl{\frac {N} {p}}, 
$$
in contradiction with having equality in \eqref{eq:ungl6d} with $\ell=1$.
 
On the other hand, if $f\ge2$, then we obtain from \eqref{eq:ungl12} that
$$
\fl{\frac {N} {p }\cdot\frac {a+pj}
{p^{\fl{\log_p(a+pj)}}}}\ge
\fl{\frac {f\cdot p^{e+1}+N-p}
{p^{e+1}}}\ge f>1=\fl{\frac {N} {p}}, 
$$
again in contradiction with having equality in \eqref{eq:ungl6d} with $\ell=1$.

\medskip
{\sc Case 3}. Our assumptions $p=3$ and $\fl{N/3}=2$ imply 
$$
H_{Nj+\lfloor Na/p\rfloor} - H_{Nj}
=H_{Nj+2} - H_{Nj}=\frac {1} {Nj+1}+\frac {1} {Nj+2}.
$$
Similar to the previous cases, in order to have equality in 
\eqref{eq:ungl6b}, we must have $Nj+\ep=f\cdot 3^e$ for some positive
integers $\ep,e,f$ with $0<\ep,f<3$. The arguments from Case~2 can now
be repeated almost verbatim. We leave the details to the reader.

\medskip
This completes the proof of the lemma.

\section{Proof of Lemma~\ref{lem:11}} \label{sec:6}

The claim is trivially true if $p$ divides $m$.
We may therefore assume that $p$ does not divide $m$ for the rest of the proof.
Let us write $m=a+pj$, with $0< a<p$. Then
comparison with \eqref{eq:congrconj1} shows that we are in a
very similar situation here. Indeed, we may derive \eqref{eq:110}
from Lemma~\ref{lem:12}. In order to see this, we observe that
\begin{align*}
H_{Nmp^s}-H_{N\fl{m/p}p^{s+1}}&=
\sum _{\ep=1} ^{Nap^s}\frac {1} {Njp^{s+1}+\ep}\\
&=
\sum _{\ep=1} ^{\fl{Na/p}}\frac {1} {Njp^{s+1}+\ep p^{s+1}}
+
\underset{p^{s+1}\nmid \ep}{\sum _{\ep=1} ^{Nap^s}}\frac {1}
{Njp^{s+1}+\ep}\\
&=\frac {1} {p^{s+1}}(H_{Nj+\fl{Na/p}}-H_{Nj})+
\underset{p^{s+1}\nmid \ep}{\sum _{\ep=1} ^{Nap^s}}\frac {1}
{Njp^{s+1}+\ep}.
\end{align*}
Because of $v_p(x+y)\ge\min\{v_p(x),v_p(y)\}$, this implies
$$
v_p(H_{Nmp^s}-H_{N\fl{m/p}p^{s+1}})\ge
\min\{ -1-s+v_p(H_{Nj+\fl{Na/p}}-H_{Nj}),-s\}.
$$
It follows that
\begin{multline}
v_p\Big(B_{\mathbf N}(m)\big(H_{Nmp^s}-H_{N\fl{m/p}p^{s+1}}\big)\Big)\\
\ge
-1-s+\min\left\{v_p\big(B_{\mathbf N}(a+pj)(H_{Nj+\fl{Na/p}}-H_{Nj})\big),
1+v_p\big(B_{\mathbf N}(a+pj)\big)\right\}.
\label{eq:wien1}
\end{multline}
If $v_p(H_N)\le 0$ or $p>N$, 
then use of Lemmas~\ref{lem:multinomial/N!} and \ref{lem:12}
(cf.\ \eqref{eq:congrconj1})
implies \eqref{eq:110} immediately since $v_p(\Xi_N)\le 0$ in this case. 
On the other hand, if $v_p(H_N)>0$ and $p\le N$,
a combination of Corollary~\ref{cor:C1} with Lemma~\ref{lem:p<L}(2) 
yields that $v_p\big(B_{\mathbf N}(a+pj)\big)\ge3+v_p(N!^k)$ as long
as $p\ne3$ and $m\ge2$. If this is used in \eqref{eq:wien1} together
with Lemma~\ref{lem:12}, we obtain
again \eqref{eq:110} under the above restriction on $p$ and $m$. 
If $p=3$, $N=7$, and $m\ge2$, another use of Corollary~\ref{cor:C1}
yields $v_p\big(B_{\mathbf N}(a+pj)\big)\ge2+v_p(N!^k)$, which, together
with \eqref{eq:wien1} and Lemma~\ref{lem:12}, 
implies \eqref{eq:110} also in this case
(recall Remarks~\ref{rem:Xi7}(a)). If $p=3$, $N\ne7$, and $m\ge2$, then either
$N=22$, in which case $v_p\big(B_{\mathbf N}(a+pj)\big)\ge7+v_p(N!^k)$
according to Corollary~\ref{cor:C1}, or $v_p(H_N)\le 0$ according to
Lemma~\ref{lem:3}. In both cases, the claimed congruence \eqref{eq:110}
follows immediately if one combines this with \eqref{eq:wien1}.

In the remaining case $m=1$, we compute directly:
\begin{align*}
B_{\mathbf N}(1)H_{Np^s}
&=N!^k\Bigg(\frac {1} {p^s}H_N+
\frac {1} {p^{s-1}}
\underset{p\nmid\ep}{\sum _{\ep=1} ^{Np}}\frac {1} {\ep}+
\frac {1} {p^{s-2}}
\underset{p\nmid\ep}{\sum _{\ep=1} ^{Np^2}}\frac {1} {\ep}+
\underset{p^{s-2}\nmid \ep}{\sum _{\ep=1} ^{Np^s}}\frac {1} {\ep}\Bigg).
\end{align*}
The last sum over $\ep$ is clearly an element of
$p^{-s+3}\mathbb Z_p$. Moreover, 
if $p\ge5$, Lemma~\ref{lem:W1} implies that the
next-to-last expression between parentheses 
is an element of $p^{-s+4}\mathbb Z_p\subset p^{-s+3}\mathbb Z_p$,
and that the second expression between parentheses is an element of
$p^{-s+3}\mathbb Z_p$.
If $p=3$, it can be checked directly that 
the expression between parentheses is an element of $3^{-s+2}\mathbb Z_3$.
Putting everything together, we conclude that
$$
B_{\mathbf N}(1)H_{Np^s}\in p^{-s} N!^k\Xi_N\mathbb Z_p,
$$
which finishes the proof of \eqref{eq:110}.

\section{Auxiliary lemmas}
\label{sec:aux}

In this section, we prove the auxiliary results necessary 
for the proof of Theorem~\ref{thm:3}, of which the outline was given in 
Section~\ref{sec:2}, with the proofs of Lemmas~\ref{lem:12} and
\ref{lem:11} given in Sections~\ref{sec:4} and \ref{sec:6},
respectively. These are, on the one hand,
improvements of Lemma~\ref{lem:multinomial/N!}
(see Lemmas~\ref{lem:B1}, \ref{lem:B2} and Corollary~\ref{cor:C1}), 
and, on the other hand, assertions addressing specific $p$-adic properties
of harmonic numbers 
(see Lemmas~\ref{lem:H_L}--\ref{lem:congH} and Corollary~\ref{cor:W2}).
Some of the results of this section are also referred to in the next section.

We begin by two lemmas improving on Lemma~\ref{lem:multinomial/N!}.

\begin{lem} \label{lem:B1}
For all positive integers $N$, $a$, and primes $p$ with $2\le a<p$, we
have
$$v_p(B_{\mathbf N}(a))\ge \fl{\frac {N} {p}}+v_p(N!^k),$$
where $\mathbf N=(N,N,\dots,N)$ {\em(}with $k$
occurrences of $N${\em)}.
\end{lem}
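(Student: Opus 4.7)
The plan is to compute $v_p(B_{\mathbf N}(a))$ directly via Legendre's formula and then exploit the assumption $a\ge 2$ to get a factor-of-two improvement over the trivial bound $v_p(N!^k)$.

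First I would write
\begin{equation*}
v_p(B_{\mathbf N}(a))
= k\sum_{\ell\ge 1}\left(\fl{\frac{Na}{p^\ell}}-N\fl{\frac{a}{p^\ell}}\right).
\end{equation*}
Because the hypothesis is $2\le a<p$, we have $\fl{a/p^\ell}=0$ for every $\ell\ge 1$, so this collapses to
\begin{equation*}
v_p(B_{\mathbf N}(a)) = k\sum_{\ell\ge 1}\fl{\frac{Na}{p^\ell}}.
\end{equation*}

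The key inequality is the elementary fact that $\fl{Na/p^\ell}\ge a\fl{N/p^\ell}$ (write $N=p^\ell q+r$ with $0\le r<p^\ell$ and observe that $\fl{Na/p^\ell}=qa+\fl{ra/p^\ell}\ge qa$). Since $a\ge 2$, this yields $\fl{Na/p^\ell}\ge 2\fl{N/p^\ell}$ term by term, and summing gives
\begin{equation*}
v_p(B_{\mathbf N}(a)) \ge 2k\sum_{\ell\ge 1}\fl{\frac{N}{p^\ell}} = 2v_p(N!^k).
\end{equation*}
To finish, I would rewrite $2v_p(N!^k) = v_p(N!^k)+v_p(N!^k)$ and bound the second copy trivially by $v_p(N!^k)\ge k\fl{N/p}\ge\fl{N/p}$, which immediately gives the desired conclusion
\begin{equation*}
v_p(B_{\mathbf N}(a)) \ge \fl{N/p}+v_p(N!^k).
\end{equation*}

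There is really no serious obstacle here: the whole argument is a one-page application of Legendre's formula, with the only substantive input being the hypothesis $a\ge 2$, which is precisely what produces the extra factor of $\fl{N/p}$ beyond the bound $v_p(N!^k)$ already supplied by Lemma~\ref{lem:multinomial/N!}. The only point to double-check is that the bound $\fl{Na/p^\ell}\ge a\fl{N/p^\ell}$ is being applied correctly for every $\ell$, which the simple division-with-remainder argument above confirms.
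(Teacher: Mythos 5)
Your proof is correct and takes essentially the same approach as the paper's, which establishes this bound as a by-product of the chain of estimations leading to \eqref{eq:unglA0} in Section~\ref{sec:4}: both arguments apply Legendre's formula, use $a<p$ to kill the terms $\fl{a/p^\ell}$, exploit $a\ge2$ to obtain a factor-of-two gain, and then peel off one copy of $\fl{N/p}$ from $v_p(N!^k)$ using $k\ge1$. The only cosmetic difference is that the paper passes through $\fl{Na/p^\ell}\ge\fl{2N/p^\ell}$ whereas you pass through $\fl{Na/p^\ell}\ge a\fl{N/p^\ell}\ge 2\fl{N/p^\ell}$; both collapse to the same estimate.
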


\begin{proof}
This was implicitly proved by the estimations leading to
\eqref{eq:unglA0}.
\end{proof}

\begin{lem} \label{lem:B2}
For all positive integers $N$, $a$, $j$, and primes $p$ with $1\le a<p$, we
have
$$v_p(B_{\mathbf N}(a+jp))\ge \fl{\frac {N} {p}}+\min\{1+T_1,T_2\}-1+
v_p(N!^k),$$
where $\mathbf N=(N,N,\dots,N)$ {\em(}with $k$
occurrences of $N${\em)}, and where
$$T_1=\max_{1\le\ep\le \lfloor
Na/p\rfloor}v_p(Nj+\ep)\quad \text{and}\quad  T_2=\fl{\log_p (a+pj)}.$$
\end{lem}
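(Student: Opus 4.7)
The plan is to distill the bound directly from the chain of inequalities carried out in the third part of the proof of Lemma~\ref{lem:12}, applied now to $B_{\mathbf N}(a+pj)$ in isolation, i.e.\ stripping away the accompanying harmonic-number factor. Starting from Legendre's formula I would write
$$v_p(B_{\mathbf N}(a+pj))=k\sum_{\ell=1}^{\infty}A_\ell,\qquad A_\ell:=\fl{\frac{N(a+pj)}{p^\ell}}-N\fl{\frac{a+pj}{p^\ell}},$$
with each $A_\ell$ a non-negative integer, and then split the sum into three consecutive ranges: $\ell=1$; $2\le\ell\le\min\{1+T_1,T_2\}$; and $\ell>T_2$. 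The possible intermediate range $\min\{1+T_1,T_2\}<\ell\le T_2$ can simply be discarded, since its contribution is non-negative.

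For the first range, $1\le a<p$ gives $\fl{(a+pj)/p}=j$, hence $A_1=\fl{Na/p}\ge\fl{N/p}$ (using $a\ge 1$). For the second range, I would invoke \eqref{eq:sumest}, which was already proved in the first part of Section~\ref{sec:4}: it states that $\sum_{\ell=2}^{D}A_\ell\ge D-1$ for every integer $D\le 1+T_1$, and with $D=\min\{1+T_1,T_2\}$ this yields $\sum_{\ell=2}^{\min\{1+T_1,T_2\}}A_\ell\ge\min\{1+T_1,T_2\}-1$. For the third range, $\ell>T_2=\fl{\log_p(a+pj)}$ forces $\fl{(a+pj)/p^\ell}=0$, so $A_\ell=\fl{N(a+pj)/p^\ell}$; setting $c:=(a+pj)/p^{T_2}\ge 1$ and using the trivial $\fl{xy}\ge\fl{y}$ for $x\ge 1$ and $y\ge 0$, the reindexing $m=\ell-T_2$ yields
$$\sum_{\ell>T_2}A_\ell=\sum_{m=1}^{\infty}\fl{\frac{Nc}{p^m}}\ge\sum_{m=1}^{\infty}\fl{\frac{N}{p^m}}=v_p(N!).$$

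Combining the three lower bounds---keeping only one copy of $\fl{N/p}$ and of $\min\{1+T_1,T_2\}-1$, which uses only that $k\ge 1$ and that those pieces are non-negative, and retaining the full factor $k$ in front of $v_p(N!)$ on the tail contribution---produces
$$v_p(B_{\mathbf N}(a+pj))\ge \fl{N/p}+\min\{1+T_1,T_2\}-1+kv_p(N!),$$
which is exactly the statement of the lemma, since $kv_p(N!)=v_p(N!^k)$. As every step is essentially a repackaging of estimates already present in the proof of Lemma~\ref{lem:12}, no genuine obstacle arises; the only nontrivial technical input is the inequality \eqref{eq:sumest}, which has been handled independently.
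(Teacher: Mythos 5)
Your proof is correct and is essentially the argument the paper itself gestures at (the paper's one-line proof of Lemma~\ref{lem:B2} simply says to run the chain of estimations culminating in~\eqref{eq:ungl7} while dropping the harmonic-number factor and the two inequalities~\eqref{eq:ungl100},~\eqref{eq:ungl6b} that concern it). Your three-way split of the $\ell$-range, the use of~\eqref{eq:sumest} with $D=\min\{1+T_1,T_2\}$, the identification $A_1=\fl{Na/p}\ge\fl{N/p}$, the tail estimate via $\fl{Nc/p^m}\ge\fl{N/p^m}$ for $c\ge1$ (which is~\eqref{eq:ungl6d}), and the accounting that keeps one copy of the head ranges and all $k$ copies of the tail, reproduce exactly the intermediate steps \eqref{eq:ungl1}--\eqref{eq:ungl7} of the third part of Section~\ref{sec:4}, stripped of the $v_p(H_{Nj+\fl{Na/p}}-H_{Nj})$ term.
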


\begin{proof}
This is seen by going through the estimations leading to
\eqref{eq:ungl7}, without employing \eqref{eq:ungl100}
and \eqref{eq:ungl6b}.
\end{proof}

As a corollary to Lemmas~\ref{lem:B1} and \ref{lem:B2}, we obtain the
following succinct $p$-adic estimation for $B_{\mathbf N}(m)$,
which is needed in the proofs of \eqref{eq:firstreductionU},
\eqref{eq:firstreductionUW}, and \eqref{eq:110}.

\begin{coro} \label{cor:C1}
Let $N$ and $m$ be positive integers and $p$ be a prime such that
$m$ is at least $2$ and not divisible by $p$. Then we
have
$$v_p(B_{\mathbf N}(m))\ge \fl{\frac {N} {p}}+v_p(N!^k),$$
where $\mathbf N=(N,N,\dots,N)$ {\em(}with $k$
occurrences of $N${\em)}.
\end{coro}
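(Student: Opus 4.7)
The plan is to reduce the corollary to the two lemmas that immediately precede it, namely Lemmas~\ref{lem:B1} and \ref{lem:B2}, by splitting according to the $p$-adic expansion of $m$. Write $m=a+jp$ with $0\le a<p$; since $p\nmid m$, we have $a\ge 1$.

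In the first case $j=0$, the hypothesis $m\ge2$ forces $a\ge2$, and Lemma~\ref{lem:B1} directly gives
$$v_p(B_{\mathbf N}(m))\ge\fl{N/p}+v_p(N!^k),$$
which is exactly what we want.

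In the remaining case $j\ge1$ we invoke Lemma~\ref{lem:B2}, which reduces the task to checking that $\min\{1+T_1,T_2\}\ge1$, where $T_1=\max_{1\le\ep\le\fl{Na/p}}v_p(Nj+\ep)$ and $T_2=\fl{\log_p(a+jp)}$. This is the only (mild) obstacle. The quantity $T_1$ is a $p$-adic valuation of a positive integer and is therefore nonnegative, so $1+T_1\ge1$. On the other hand, $j\ge1$ together with $a\ge1$ gives $a+jp\ge p$, whence $T_2\ge1$. Of course one should also note that $T_1$ is only defined when $\fl{Na/p}\ge1$; if instead $\fl{Na/p}=0$, then $p>N$, so $\fl{N/p}=0$ and $v_p(N!)=0$, and the inequality degenerates to $v_p(B_{\mathbf N}(m))\ge0$, which is immediate from Lemma~\ref{lem:multinomial/N!}. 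In all subcases we conclude $\min\{1+T_1,T_2\}\ge1$, so Lemma~\ref{lem:B2} yields the claimed bound.
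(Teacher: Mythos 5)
Your proof is correct and follows the derivation the paper intends (the paper simply labels the statement a corollary of Lemmas~\ref{lem:B1} and~\ref{lem:B2} without spelling out the reduction). One small wording caveat: your final sentence says that in all subcases $\min\{1+T_1,T_2\}\ge1$, but in the degenerate subcase $\fl{Na/p}=0$ you did not (and could not) establish this — you rightly bypassed Lemma~\ref{lem:B2} and invoked Lemma~\ref{lem:multinomial/N!} instead — so that sentence slightly overstates what was shown, though the argument itself is sound.
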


The next three lemmas of this section 
provide elementary information on the $p$-adic
valuation of harmonic numbers for $p=2,3,5$ which is needed
in the proof of Lemma~\ref{lem:p<L} and is also referred to
frequently at other places. 
(For example, Lemma~\ref{lem:H_L} was used in the proof of~\eqref{eq:unglB}.)
The proofs are not
difficult (cf.\ \cite{boyd}) and are therefore omitted. 

\begin{lem} \label{lem:H_L}
For all positive integers $N$, we have $v_2(H_N)=-\fl{\log_2N}$.
\end{lem}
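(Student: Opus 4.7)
The plan is to exploit the uniqueness of a single dominant term in the harmonic sum, and then invoke the strong (non-archimedean) triangle inequality for the $2$-adic valuation.

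First I set $e:=\fl{\log_2 N}$, so that $2^e\le N<2^{e+1}$, and I would show that $n=2^e$ is the unique integer in $\{1,2,\dots,N\}$ with $v_2(n)\ge e$. Indeed, any such $n$ can be written $n=2^e m$ with $m$ a positive integer; if $m\ge2$, then $n\ge 2^{e+1}>N$, a contradiction. Hence $m=1$ and $n=2^e$.

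Next, I would analyse the sum $H_N=\sum_{n=1}^N 1/n$ termwise. The distinguished term $1/2^e$ satisfies $v_2(1/2^e)=-e$, whereas every other term $1/n$ with $1\le n\le N$ and $n\ne2^e$ satisfies $v_2(n)<e$, hence $v_2(1/n)>-e$. Since the minimum of the $2$-adic valuations of the summands is attained by exactly one summand, the strong triangle inequality gives equality
\begin{equation*}
v_2(H_N)=\min_{1\le n\le N}v_2(1/n)=-e=-\fl{\log_2 N},
\end{equation*}
which is the claim.

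The main (and really only) obstacle is the verification of uniqueness of the index $2^e$, which is elementary as above; after that, the result follows purely formally from the ultrametric property of $v_2$.
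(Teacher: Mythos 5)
Your proof is correct. The paper explicitly omits the proof of this lemma, pointing only to Boyd's article, and your argument — isolating the unique index $n=2^e$ with $v_2(n)\ge e$ in $\{1,\dots,N\}$, noting that every other summand of $H_N$ then has strictly larger $2$-adic valuation, and invoking the ultrametric equality case of the strong triangle inequality — is precisely the standard proof of this classical fact.
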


\begin{lem} \label{lem:3}
We have $v_3(H_2)=v_3(H_7)=v_3(H_{22})=1$.
For positive integers $N\notin\{2,7,22\}$, we have $v_3(H_N)\le0$.
\end{lem}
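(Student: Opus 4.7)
The plan rests on the decomposition
\[
H_N = \tfrac{1}{3}\, H_{\lfloor N/3\rfloor} + \widetilde{H}_N, \qquad \widetilde{H}_N := \sum_{\substack{1\le i\le N\\ 3\nmid i}}\frac{1}{i} \in \mathbb{Z}_3,
\]
obtained by splitting $H_N=\sum_{i=1}^N 1/i$ into the terms with $3\mid i$ (whose sum equals $\tfrac{1}{3}H_{\lfloor N/3\rfloor}$ after the change of variable $i=3j$) and the terms with $3\nmid i$ (which form a $3$-adic integer). The immediate consequence is that whenever $v_3(H_{\lfloor N/3\rfloor})\le 0$, the ultrametric inequality forces
\[
v_3(H_N)=v_3(H_{\lfloor N/3\rfloor})-1 \le -1.
\]

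The first assertion is a direct computation: $H_2=3/2$ and $H_7=363/140=3\cdot 121/140$ give $v_3=1$ by inspection, and for $H_{22}$ I would apply the decomposition once more and compute $H_{22}=\tfrac{1}{3}H_7+\widetilde{H}_{22}$ modulo $9$ by tabulating $i^{-1}\bmod 9$ over the non-multiples of $3$ in $\{1,\dots,22\}$; this yields $H_{22}\equiv 3\pmod 9$, hence $v_3(H_{22})=1$.

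For the second assertion I would take the contrapositive of the consequence of the decomposition, namely $v_3(H_N)\ge 1 \Rightarrow v_3(H_{\lfloor N/3\rfloor})\ge 1$. Every exceptional $N$ (meaning $v_3(H_N)\ge 1$) therefore generates a descent chain $N > \lfloor N/3\rfloor > \cdots$ of exceptions terminating at $1$ or $2$; since $v_3(H_1)=0$, the chain must end at $2$. The exception set is built level by level above $N=2$ by examining, at each stage, the three preimages of the previous exception under $\lfloor\cdot/3\rfloor$. From $N=2$: check $\{6,7,8\}$; only $N=7$ survives. From $N=7$: check $\{21,22,23\}$; only $N=22$ survives (the other two being ruled out by $H_{21}=H_{22}-1/22$ and $H_{23}=H_{22}+1/23$ together with $v_3(H_{22})=1\ne 0=v_3(1/22)=v_3(1/23)$ and the ultrametric inequality). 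From $N=22$: check $\{66,67,68\}$; the key claim is that none of these survives, which closes the recursion. A strong induction on $N$ (using the displayed decomposition for the induction step whenever $\lfloor N/3\rfloor \notin \{2,7,22\}$) then yields $v_3(H_N)\le 0$ for every $N\notin\{2,7,22\}$.

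The main obstacle is the verification that no new exception appears at the level $\{66,67,68\}$. For $N=66$ I would apply the decomposition a third time: $\widetilde{H}_{66}\equiv 0\pmod 3$ because the non-multiples of $3$ in $\{1,\dots,66\}$ pair as $(3k+1)+(3k+2)\equiv 0\pmod 3$ and $1/i\equiv i\pmod 3$; combined with $\tfrac{1}{3}H_{22}\equiv 1\pmod 3$ (this is precisely the mod-$9$ information on $H_{22}$ established above), this gives $v_3(H_{66})=0$. The values $v_3(H_{67})$ and $v_3(H_{68})$ then follow by adding to $H_{66}$ the $3$-adic units $1/67$ and $1/68$, respectively, and invoking ultrametricity once more.
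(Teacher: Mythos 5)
The paper itself omits the proof of this lemma, referring to Boyd's paper and calling it ``not difficult,'' so there is no internal paper argument to compare against. Your approach --- the $3$-adic descent $H_N=\tfrac13 H_{\lfloor N/3\rfloor}+\widetilde H_N$ with $\widetilde H_N\in\mathbb Z_3$, giving $v_3(H_N)\ge1 \Rightarrow v_3(H_{\lfloor N/3\rfloor})\ge1$ and thus a finite tree of candidates rooted at $N=2$ --- is the standard method (precisely the one Boyd uses) and your global structure is sound. The explicit computations ($H_2=3/2$, $H_7=363/140$, $H_{22}\equiv 3\bmod 9$, the exclusion of $\{6,8\}$ and $\{21,23\}$, and $\widetilde H_{66}\equiv0\bmod 3$) all check out.

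There is, however, one step that is not quite justified as written. For $H_{67}$ and $H_{68}$ you appeal to ``ultrametricity once more,'' but since $v_3(H_{66})=0$ and $v_3(1/67)=v_3(1/68)=0$ are \emph{equal}, the strong triangle inequality only gives $v_3(H_{66}+1/67)\ge 0$, not equality; a priori cancellation modulo $3$ could occur. You therefore need to verify that it does not, i.e., compute residues: from $\tfrac13H_{22}\equiv1$ and $\widetilde H_{66}\equiv0\bmod 3$ you have $H_{66}\equiv1\bmod3$; then $67\equiv1$ gives $1/67\equiv1$ and $H_{67}\equiv2\not\equiv0\bmod3$, and $68\equiv2$ gives $1/68\equiv2$ and $H_{68}\equiv1\not\equiv0\bmod 3$. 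This is exactly the same kind of mod-$3$ bookkeeping you already carried out for $H_{66}$, so the gap is minor and easily filled, but as stated the invocation of ultrametricity alone does not suffice there.
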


\begin{lem} \label{lem:5}
We have $v_5(H_4)=2$ and $v_5(H_{20})=v_5(H_{24})=1$.
For positive integers $N\notin\{4,20,24\}$, we have $v_5(H_N)\le0$.
\end{lem}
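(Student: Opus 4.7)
The plan is a descent argument on $N$ based on the splitting
\[
H_N = \frac{1}{5}H_q + T_N, \qquad q := \lfloor N/5\rfloor,\quad T_N := \sum_{\substack{1\le i\le N\\ \gcd(i,5)=1}}\frac{1}{i}.
\]
Since every denominator in $T_N$ is coprime to~$5$, $v_5(T_N)\ge 0$. The ultrametric inequality then forces $v_5(H_N) = v_5(H_q)-1 \le -1$ whenever $v_5(H_q)\le 0$. Consequently, the ``bad'' set $\{N : v_5(H_N)>0\}$ is stable under the iteration $N\mapsto \lfloor N/5\rfloor$, so it forms a tree whose root lies in $\{1,2,3,4\}$. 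Direct computation gives $v_5(H_1)=v_5(H_2)=v_5(H_3)=0$ and $v_5(H_4)=v_5(25/12)=2$, so the unique root is $N=4$ and the lemma reduces to walking out the tree growing from $4$ and checking it terminates.

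The engine is a uniform block congruence modulo $5^2$. Expanding $1/(5j+b)\equiv 1/b - 5j/b^2\pmod{5^2}$ and summing over $b\in\{1,2,3,4\}$ yields
\[
\sum_{b=1}^{4}\frac{1}{5j+b}\equiv H_4 - 5j\sum_{b=1}^{4}\frac{1}{b^2}\pmod{5^2}.
\]
Because $v_5(H_4)=2$ and $v_5\bigl(\sum_{b=1}^{4}1/b^2\bigr)=v_5(205/144)=1$, both terms on the right lie in $5^2\mathbb{Z}_5$; each block is therefore $\equiv 0\pmod{5^2}$. It follows that $v_5(T_{5m})\ge 2$ for every $m\ge 1$, and a short bookkeeping of the remaining partial block yields
\[
v_5(T_N)\ge 2\text{ if }N\equiv 0,4\pmod{5},\qquad v_5(T_N)=0\text{ if }N\equiv 1,2,3\pmod{5}.
\]

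It remains to walk the tree. At depth~$1$, the children of $4$ are $\{20,21,22,23,24\}$; using $v_5(H_4/5)=1$ and the above residue classification of $v_5(T_N)$, the ultrametric inequality gives $v_5(H_{20})=v_5(H_{24})=1$ and $v_5(H_N)=0$ for $N\in\{21,22,23\}$. At depth~$2$ only $q\in\{20,24\}$ survive, with candidate children in $\{100,\dots,104\}\cup\{120,\dots,124\}$. For these we have $v_5(H_q/5)=0$, and a short mod-$5$ calculation (using $H_q\equiv H_4/5\equiv 5/12\pmod{5^2}$, which yields $H_q/5\equiv 1/12\equiv 3\pmod 5$) combined with the residues of $T_N\pmod 5$ from the previous paragraph shows $v_5(H_N)=0$ for all ten values of $N$. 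No new bad $N$ appears, the tree terminates, and the bad set is exactly $\{4,20,24\}$.

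The main obstacle is the uniform $5^2$-level block identity, which relies on the Wolstenholme-type fact $v_5(H_4)\ge 2$ together with the secondary observation $v_5\bigl(\sum_{b=1}^{4}1/b^2\bigr)\ge 1$. Once both are verified by a one-line computation, everything else is a finite (and short) round of modular arithmetic on a handful of residues.
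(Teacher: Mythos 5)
Your proof is correct. The paper actually omits a proof of Lemmas~6--8 entirely, stating only that ``the proofs are not difficult (cf.~\cite{boyd}) and are therefore omitted''; your descent argument --- the splitting $H_N = \tfrac15 H_{\lfloor N/5\rfloor} + T_N$, the resulting downward closure of the bad set $\{N : v_5(H_N) > 0\}$ under $N \mapsto \lfloor N/5\rfloor$, and the Wolstenholme-type block congruence $\sum_{b=1}^{4}\frac{1}{5j+b} \equiv 0 \pmod{5^2}$ (using $v_5(H_4)=2$ and $v_5(205/144)=1$) --- is precisely the $p$-adic tree method of Boyd's paper, so it supplies what the paper leaves out. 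All the numerics check: the block argument gives $v_5(T_N)\ge 2$ for $N\equiv 0,4$ and $v_5(T_N)=0$ for $N\equiv 1,2,3 \pmod 5$; the depth-one children of $4$ then give $v_5(H_{20})=v_5(H_{24})=1$ and $v_5(H_{21})=v_5(H_{22})=v_5(H_{23})=0$; and at depth two, $H_q/5\equiv 3 \pmod 5$ for $q\in\{20,24\}$ together with $T_N\equiv H_r\equiv 1,4,1 \pmod 5$ for $r=1,2,3$ gives $H_N\equiv 4,2,4\not\equiv 0$, while for $r=0,4$ the ultrametric inequality applies directly. The one small exposition slip is that you invoke ``the residues of $T_N\pmod 5$ from the previous paragraph,'' but that paragraph only records the valuations of $T_N$, not its residues; you need the additional (one-line) observation $T_{5q+r}\equiv H_r \pmod 5$ for $r\in\{1,2,3\}$. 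With that inserted, the walk of the tree terminates and the bad set is exactly $\{4,20,24\}$ as claimed.
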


Next, we record some properties of integers $N$ and primes $p$
for which $v_p(H_N)>0$. These are needed throughout
Section~\ref{sec:2}. 

\begin{lem} \label{lem:p<L}
Let $p$ be a prime, and let $N$ be an integer with $p\le N$.
Then the following assertions hold true:

\begin{enumerate} 
\item If $v_p(H_N)>0$ then $N\ge 2p$.
\item If $v_p(H_N)>0$ and $p\ne 3$ then $N\ge 3p$.
\item If $v_p(H_N)>0$ and $p\notin\{3,5,11\}$ then $N\ge 5p$.
\item If $v_p(H_N)>2$ then $N\ge 6p$.
\end{enumerate}
\end{lem}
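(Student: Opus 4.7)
The plan is to exploit the basic decomposition
\[
H_N \;=\; \frac{1}{p}\, H_{q} \;+\; \Sigma_{N,p}, \qquad q := \left\lfloor N/p\right\rfloor,\quad
\Sigma_{N,p} := \sum_{\substack{1\le i\le N \\ p\,\nmid\, i}} \frac{1}{i},
\]
whose second summand has $v_p(\Sigma_{N,p})\ge 0$. Consequently $v_p(H_N)\ge\min\{-1+v_p(H_q),\,0\}$, with equality whenever the two arguments of the minimum differ; in particular, as soon as $v_p(H_q)=0$, strict inequality forces $v_p(H_N)=-1$.

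For parts (1)--(3), I would confine $q$ to the range $\{1,\dots,t-1\}$ with $t\in\{2,3,5\}$ respectively, using the hypothesis $p\le N<tp$. The resulting list of possibilities $H_q\in\{1,\,3/2,\,11/6,\,25/12\}$ satisfies $v_p(H_q)=0$ except for $p$ in the small exceptional sets $\{2,3\}$, $\{2,3,11\}$, $\{2,3,5\}$; since $p=2$ is in any event handled by Lemma~\ref{lem:H_L}, the remaining exceptions match precisely those permitted by the lemma statement. This instantly yields $v_p(H_N)=-1\le 0$, establishing the contrapositives of parts~(1)--(3).

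For part~(4), the same recipe applies with $t=6$, adding $H_5=137/60$ to the list. The cases $p\in\{2,3,5\}$ are dispensed with universally by Lemmas~\ref{lem:H_L}--\ref{lem:5}, each of which actually supplies $v_p(H_N)\le 2$. For $p\ge 7$ the decomposition gives $v_p(H_N)=-1<3$ in every configuration \emph{except} the two in which $p$ divides the numerator of some $H_q$ with $q\le 5$: namely $(p,q)=(11,3)$, corresponding to $N\in\{33,\dots,43\}$, and $(p,q)=(137,5)$, corresponding to $N\in\{685,\dots,821\}$. I would handle these two finite windows by sharpening the decomposition. Expanding $1/(jp+i)\equiv 1/i - jp/i^{2}+(jp)^{2}/i^{3}\pmod{p^{3}}$, summing over $i\in\{1,\dots,p-1\}$ and then over $j\in\{0,\dots,q-1\}$, one obtains
\[
H_{qp}\;\equiv\;\frac{H_q}{p}\;+\;q\,H_{p-1}\;-\;p\,\binom{q}{2}\, H^{(2)}_{p-1}\pmod{p^{3}},
\]
where I have absorbed the $H^{(3)}_{p-1}$ contribution into the modulus thanks to $\sum_{i=1}^{p-1}i^{-3}\equiv 0\pmod p$ for $p\ge 5$. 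Wolstenholme's theorem ($p\ge 5$) then guarantees $v_p(H_{p-1})\ge 2$ and $v_p(H^{(2)}_{p-1})\ge 1$, so the last two terms lie in $p^{2}\mathbb Z_p$ and hence $H_{qp}\equiv H_q/p\pmod{p^{2}}$, pinning $v_p(H_{qp})=0$ in both exceptional configurations. The main obstacle, and the only step not resolving in closed form, is to propagate this through the $p-1$ increments $H_{qp+r}=H_{qp}+\sum_{i=1}^{r}1/(qp+i)$ and verify that $H_{qp+r}\not\equiv 0\pmod{p^{3}}$ for every $r\in\{0,\dots,p-1\}$ in each of the two exceptional windows. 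This reduces to a finite inspection modulo $p^{3}$, involving only $11^{3}=1331$ and $137^{3}$ residues respectively, consistent with the absence of such $N$ in the tables of \cite{boyd} and the data summary referred to in Remarks~\ref{rem:Xi7}(c).
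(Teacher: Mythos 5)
Your proposal follows the paper's strategy exactly: the recursion $H_N = \frac{1}{p}H_{\lfloor N/p\rfloor} + (\text{a $p$-adic integer})$ forces $v_p(H_{\lfloor N/p\rfloor})>0$ whenever $v_p(H_N)>0$, and each part then reduces to inspecting $v_p(H_q)$ for $q$ in a short initial segment, with the exceptional primes matching the explicit numerators $3,11,25,137$ of $H_2,\dots,H_5$. The only deviation is in part (4): where the paper simply cites its precomputed table (giving $N\ge 848$ for $p=11$ and $N>500000$ for $p=137$, both comfortably above $6p$), you insert a Wolstenholme-type expansion to show $v_p(H_{qp})=0$ at the start of each exceptional window $(11,3)$ and $(137,5)$, and then narrow the remaining verification to a short residue check modulo $p^{3}$ over at most $p$ consecutive indices. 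Logically both resolutions rest on a finite computation that is invoked rather than displayed, so the content is the same; your version merely localises the computation far more tightly than the paper's table sweep up to $N=10^6$.
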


\begin{proof}
We have
$$H_N=\frac {1} {p}H_{\fl{N/p}}+
\underset{p\nmid\ep}{\sum _{\ep=1} ^{N}}\frac {1} {\ep}.$$
Since the sum over $\ep$ is in $\mathbb Z_p$, in order to have
$v_p(H_N)>0$ we must have $v_p(H_{\fl{N/p}})>0$. 
Clearly, $v_p(H_1)=0$ so that (1) follows. If $\fl{N/p}<3$ and $p\ne3$, 
then $v_p(H_{\fl{N/p}})$ cannot be positive since
$H_1=1$ and $H_2=\frac {3} {2}$. This implies (2). 
If $\fl{N/p}<5$ and $p\notin \{3,5,11\}$, 
then, again, $v_p(H_{\fl{N/p}})$ cannot be positive since, as we
already noted, $H_1=1$ and $H_2=\frac {3} {2}$, and since
$H_3=\frac {11} {6}$ and $H_4=\frac {25} {12}.$
This yields (3).

To see (4), we observe that, owing to
Lemmas~\ref{lem:H_L}--\ref{lem:5}, we may assume that
$p\notin\{2,3,5\}$. Furthermore, if $p=11$ then, according to our
table referred to in Remarks~\ref{rem:Xi7}(c) in the Introduction
(see also \cite{boyd}), 
we have $N\ge848$. Similarly, if $p=137$ then, according to
our table, we have $N>500000$. The claim can now be established in
the style of the proofs of (1)--(3) upon observing
that $H_5=\frac {137} {60}$.
\end{proof}

We turn to a slight generalisation of Wolstenholme's theorem
on harmonic numbers. 
(We refer the reader to \cite[Chapter~VII]{hw} for information on 
this theorem, which corresponds to the case $r=1$ in the lemma
below.) 

\begin{lem} \label{lem:W1}
For all primes $p\ge5$ and positive integers $r$, we have
$$v_p(H_{rp-1}-H_{rp-p})\ge2.$$
\end{lem}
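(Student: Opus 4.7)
The plan is to exploit the pairing of summands symmetric about the midpoint $(rp-p/2)$, reducing the problem to the classical congruence $\sum_{i=1}^{p-1} 1/i^2 \equiv 0 \pmod{p}$ valid for all primes $p\ge5$.

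First I would write
$$H_{rp-1}-H_{rp-p}=\sum_{i=1}^{p-1}\frac{1}{(r-1)p+i}.$$
Since $p$ is odd, the indices pair up naturally as $\{i,p-i\}$ for $i=1,2,\dots,(p-1)/2$, and a direct computation gives
$$\frac{1}{(r-1)p+i}+\frac{1}{rp-i}=\frac{(2r-1)p}{((r-1)p+i)(rp-i)}.$$
Summing these identities yields
$$H_{rp-1}-H_{rp-p}=(2r-1)p\sum_{i=1}^{(p-1)/2}\frac{1}{((r-1)p+i)(rp-i)}.$$
Each denominator in the remaining sum is coprime to $p$, so the sum lies in $\mathbb{Z}_p$, and it will suffice to show that it is congruent to $0$ modulo $p$; then multiplication by the outer factor $(2r-1)p$ gives the claimed $p$-adic valuation at least $2$.

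For the mod-$p$ reduction, one observes that $((r-1)p+i)(rp-i)\equiv -i^2\pmod{p}$, hence
$$\sum_{i=1}^{(p-1)/2}\frac{1}{((r-1)p+i)(rp-i)}\equiv -\sum_{i=1}^{(p-1)/2}\frac{1}{i^2}\pmod{p}.$$
The substitution $i\mapsto p-i$ shows that $\sum_{i=1}^{p-1}1/i^2 \equiv 2\sum_{i=1}^{(p-1)/2}1/i^2\pmod{p}$, and the bijection $i\mapsto 1/i$ on $\mathbb{F}_p^*$ gives
$$\sum_{i=1}^{p-1}\frac{1}{i^2}\equiv\sum_{i=1}^{p-1}i^2=\frac{p(p-1)(2p-1)}{6}\equiv 0\pmod{p},$$
the final step using that $6$ is invertible modulo $p$ precisely when $p\ge5$. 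Therefore $\sum_{i=1}^{(p-1)/2}1/i^2\equiv 0\pmod{p}$, which is what was needed.

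There is essentially no serious obstacle here; the hypothesis $p\ge5$ enters exactly twice, ensuring both that $6$ is invertible modulo $p$ and that the pairing involves at least two summands. The only mildly delicate point is being careful with the algebraic pairing identity and recognising that the inner sum, although $p$-integral a priori, in fact gains an extra factor of $p$ thanks to the quadratic-sum congruence — this is essentially Wolstenholme's classical argument in thinly disguised form (it specialises to Wolstenholme's theorem when $r=1$).
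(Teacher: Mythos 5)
Your proof is correct and follows essentially the same approach as the paper: the same pairing $\{i, p-i\}$ giving the factor $(2r-1)p$, the same reduction modulo $p$ to $-\sum_{i=1}^{(p-1)/2} 1/i^2$, and then the observation that this vanishes modulo $p$. The only cosmetic difference is at the final step: you double up to the full sum $\sum_{i=1}^{p-1} 1/i^2$ and apply the inversion bijection there, whereas the paper identifies $\sum_{i=1}^{(p-1)/2} 1/i^2$ directly with the sum of all quadratic residues, i.e.\ $\sum_{i=1}^{(p-1)/2} i^2 = p(p^2-1)/24$; both routes reduce to the same divisibility fact and the same use of $p\ge 5$.
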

\begin{proof}
By simple rearrangement, we have
\begin{align*}
H_{rp-1}-H_{rp-p}&=
\sum _{\ep=1} ^{p-1}\frac {1} {rp-p+\ep}=
\sum _{\ep=1} ^{(p-1)/2}\left(\frac {1} {rp-p+\ep}+\frac {1}
{rp-\ep}\right)\\
&=
p(2r-1)\sum _{\ep=1} ^{(p-1)/2}\frac {1} {(rp-p+\ep)(rp-\ep)}.
\end{align*}
It therefore suffices to consider the last sum over $\ep$ in 
$\mathbb Z/p\mathbb Z$ (with $1/\al$ being interpreted as the
multiplicative inverse of $\al$ in $\mathbb Z/p\mathbb Z$)
and show that it is 
congruent to $0$ modulo $p$. If we reduce
this sum 
modulo $p$, then we are left with
$$-\sum _{\ep=1} ^{(p-1)/2}\frac {1} {\ep^2},$$
which is, up to the sign, the sum of all quadratic residues in
$\mathbb Z/p\mathbb Z$, 
that is, equivalently, 
$$-\sum _{\ep=1} ^{(p-1)/2}\ep^2=\frac {p(p-1)(p+1)} {24}.$$
Clearly, this is divisible by $p$ for all primes $p\ge5$.
\end{proof}

As a corollary, we obtain strengthenings of \eqref{eq:J} that we
need in the proof of \eqref{eq:firstreductionU} and
\eqref{eq:firstreductionU3}.

\begin{coro} \label{cor:W2}
For all primes $p\ge5$ and positive integers $J$ divisible by $p$, we have
$$
pH_{J} \equiv H_{{J/p}} \mod p^3\mathbb{Z}_p.
$$
Moreover, for all positive integers $J$ divisible by $3$, we have
$$
3H_{J} \equiv H_{{J/3}} \mod 3^2\mathbb{Z}_3.
$$
\end{coro}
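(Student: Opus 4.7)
\medskip

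The plan is to peel off the terms with $p\mid \ep$ from $H_J$ in the usual way, and then reduce the corollary to Lemma~\ref{lem:W1} (for $p\ge 5$) or to a direct block-by-block computation (for $p=3$).

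First, for any $J$ divisible by $p$, say $J=rp$, splitting the sum defining $H_J$ according to whether the summation index is or is not divisible by $p$ gives
\[
H_J \;=\; \frac{1}{p}\,H_{J/p} \;+\; \underset{p\nmid\ep}{\sum_{\ep=1}^{J}}\frac{1}{\ep},
\]
so that
\[
pH_J - H_{J/p} \;=\; p\underset{p\nmid\ep}{\sum_{\ep=1}^{J}}\frac{1}{\ep}
\;=\; p\sum_{s=1}^{r}\bigl(H_{sp-1}-H_{sp-p}\bigr).
\]
Hence it suffices to control the $p$-adic valuation of each block $H_{sp-1}-H_{sp-p}$.

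For $p\ge5$, Lemma~\ref{lem:W1} (applied with $r$ replaced by each $s\in\{1,\dots,r\}$) gives $v_p(H_{sp-1}-H_{sp-p})\ge 2$, so the whole sum over $s$ lies in $p^{2}\mathbb Z_p$, and multiplying by the prefactor $p$ yields the desired congruence modulo $p^3\mathbb Z_p$.

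For $p=3$, one simply observes that each block consists of only two terms,
\[
H_{3s-1}-H_{3s-3}\;=\;\frac{1}{3s-2}+\frac{1}{3s-1}
\;=\;\frac{3(2s-1)}{(3s-2)(3s-1)},
\]
whose denominator is coprime to $3$; hence $H_{3s-1}-H_{3s-3}\in 3\mathbb Z_3$. Summing over $s=1,\dots,r$ and multiplying by the prefactor $3$ gives the second congruence modulo $3^2\mathbb Z_3$.

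The main (mild) obstacle is merely being careful with the case $p=3$, where Lemma~\ref{lem:W1} is not available and one cannot gain the extra factor of $p$ via the quadratic-residue trick used in its proof: the reason is precisely that for $p=3$ the sum $\sum_{\ep=1}^{(p-1)/2}\ep^{2}=1$ is not divisible by $p$. This is why the modulus in the $p=3$ statement is only $3^2$ rather than $3^3$. All other steps are routine manipulations and a direct application of the already proven lemma.
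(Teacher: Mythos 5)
Your proof is correct and follows essentially the same route as the paper: the same rearrangement $pH_J - H_{J/p} = p\sum_{s=1}^{J/p}(H_{sp-1}-H_{sp-p})$, then Lemma~\ref{lem:W1} for $p\ge5$ and a direct block computation for $p=3$. The explicit verification of the $p=3$ blocks is exactly what the paper abbreviates as ``easily seen directly.''
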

\begin{proof}
By simple rearrangement, we have
$$pH_{J} - H_{{J/p}}= 
p\sum _{r=1} ^{J/p}(H_{rp-1}-H_{rp-p}).$$
Due to Lemma~\ref{lem:W1}, the $p$-adic valuation of this
expression is at least $3$
if $p\ge5$. If $p=3$, it is easily seen
directly that the $3$-adic valuation of this expression is at least $2$.
\end{proof}

Further strengthenings of \eqref{eq:J}, needed 
in the proof of \eqref{eq:firstreductionUW}, are given in the final
two lemmas of this section.

\begin{lem} \label{lem:W3}
For all primes $p\ge5$ and positive integers $J$ divisible by $p^2$, we have
$$
pH_{J} \equiv H_{{J/p}} \mod p^5\mathbb{Z}_p.
$$
\end{lem}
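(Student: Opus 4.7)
The plan is to refine the proof of Corollary~\ref{cor:W2}. Writing $J=p^2M$ and reusing the identity
\[
pH_J-H_{J/p}=p\sum_{r=1}^{pM}\bigl(H_{rp-1}-H_{rp-p}\bigr)
\]
established there, I am reduced to showing that the inner sum lies in $p^4\mathbb Z_p$, which is one additional factor of $p$ beyond the bound produced by Lemma~\ref{lem:W1}.

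The starting point is the pairing identity
\[
H_{rp-1}-H_{rp-p}=p(2r-1)\sum_{\epsilon=1}^{(p-1)/2}\frac{1}{(rp-p+\epsilon)(rp-\epsilon)}
\]
already derived in the proof of Lemma~\ref{lem:W1}. Unlike there, I now expand the rational function in powers of $p$. A partial-fraction decomposition combined with the geometric-series expansion of each factor in the denominator yields the formal identity
\[
\frac{2r-1}{(rp-p+\epsilon)(rp-\epsilon)}=-\sum_{m\ge 0}\frac{r^{m+1}+(-1)^m(r-1)^{m+1}}{\epsilon^{m+2}}\,p^m,
\]
and hence, setting $S_j:=\sum_{\epsilon=1}^{(p-1)/2}\epsilon^{-j}$,
\[
H_{rp-1}-H_{rp-p}=-\sum_{m\ge 0}\bigl(r^{m+1}+(-1)^m(r-1)^{m+1}\bigr)p^{m+1}S_{m+2}.
\]
Summing over $r=1,\dots,pM$ and interchanging the two summations, the inner sum over $r$ has two clean forms: for $m$ odd it telescopes to $(pM)^{m+1}$, so the resulting contribution has $p$-adic valuation $\ge 2m+2\ge 4$; for $m$ even with $m\ge 2$, Faulhaber's theorem guarantees that $\sum_{r=1}^{pM-1}r^{m+1}$ is divisible by $(pM-1)^2(pM)^2/4$, hence of $p$-adic valuation $\ge 2$ for $p\ge 5$, and the contribution then has valuation $\ge m+3\ge 5$. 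Thus the only term that does not automatically lie in $p^4\mathbb Z_p$ is the one for $m=0$, whose inner sum equals $\sum_{r=1}^{pM}(2r-1)=(pM)^2$ and which gives the contribution $-p^3M^2S_2$.

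The proof therefore reduces to $v_p(S_2)\ge 1$ for $p\ge 5$. This is a standard Wolstenholme-type congruence: pairing $\epsilon\leftrightarrow p-\epsilon$ gives $2S_2\equiv\sum_{\epsilon=1}^{p-1}\epsilon^{-2}\pmod p$, and since inversion is a bijection on $(\mathbb Z/p\mathbb Z)^\times$,
\[
\sum_{\epsilon=1}^{p-1}\epsilon^{-2}\equiv\sum_{\epsilon=1}^{p-1}\epsilon^2=\frac{(p-1)p(2p-1)}{6}\equiv 0\pmod p,
\]
the last step being valid since $p\ge 5$ makes $6$ a $p$-adic unit. The main obstacle in executing this plan is the careful $p$-adic bookkeeping for the tail $m\ge 1$; once the telescoping (for odd $m$) and Faulhaber's theorem (for even $m\ge 2$) are used to push each of those contributions past $p^4$, the entire proof collapses to the Wolstenholme-type congruence for $S_2$ above.
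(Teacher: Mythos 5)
Your argument is correct in its conclusion, but it follows a genuinely different route from the paper's. The paper exploits the hypothesis $p^2\mid J$ by grouping the sum $pH_J-H_{J/p}=p\sum_{\epsilon\le J-1,\,p\nmid\epsilon}\epsilon^{-1}$ into blocks of length $p^2$, pairing $\epsilon\leftrightarrow rp^2-\epsilon$ inside each block to pull out a factor $p^3$ at once, and then reducing to the single congruence $\sum_{\epsilon\le(p^2-1)/2,\,p\nmid\epsilon}\epsilon^{-2}\equiv0\pmod{p^2}$, which is verified by a closed-form computation of a sum of squares. You instead keep the block-of-$p$ decomposition from Corollary~\ref{cor:W2}, expand each block $p$-adically, interchange sums, and isolate $S_2$ as the only term that is not automatically $\ge 4$ in $p$-adic valuation; the paper's version avoids the power-series bookkeeping by extracting more structure up front. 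One inaccuracy in your tail estimate should be flagged: the divisibility ``$\sum_{r=1}^n r^{m+1}$ is divisible by $n^2(n+1)^2/4$'' is \emph{false} as an integer statement for general odd exponent $m+1\ge 5$ (for instance $\sum_{r=1}^6 r^5=12201$, while $T^2=441$ does not divide it; the Faulhaber cofactor has a nontrivial denominator). This does not damage the proof, because for even $m\ge4$ the prefactor $p^{m+1}$ already has valuation $\ge 5$ and no estimate on $T_m$ is needed; the only place where a Faulhaber-type bound is genuinely required is $m=2$, where $\sum_{r=1}^n r^3=T^2$ holds exactly. With that reformulation your argument is sound, and the final Wolstenholme-type congruence $v_p(S_2)\ge1$ matches the corresponding step in the paper's proof of Lemma~\ref{lem:W1}.
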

\begin{proof}
Again, by simple rearrangement, we have
\begin{align*}
pH_{J}-H_{J/p}&=p
\underset{p\nmid\ep}{\sum _{\ep=1} ^{J-1}}\frac {1} {\ep}
=p\sum _{r=1} ^{J/p^2}
\underset{p\nmid\ep}{\sum _{\ep=1} ^{p^2-1}}\frac {1} {rp^2-p^2+\ep}\\
&=p^3\sum _{r=1} ^{J/p^2}(2r-1)
\underset{p\nmid\ep}{\sum _{\ep=1} ^{(p^2-1)/2}}
\frac {1} {(rp^2-p^2+\ep)(rp^2-\ep)}.
\end{align*}
It therefore suffices to consider the last sum over $\ep$ in 
$\mathbb Z/p^2\mathbb Z$
and show that it is 
congruent to $0$ modulo $p^2$. If we reduce
this sum 
modulo~$p^2$, then we are left with
$$-\underset{p\nmid\ep}{\sum _{\ep=1} ^{(p^2-1)/2}}\frac {1} {\ep^2},$$
which is, up to the sign, the sum of all quadratic residues in
$\mathbb Z/p^2\mathbb Z$, 
that is, equivalently, 
$$-\underset{p\nmid\ep}{\sum _{\ep=1} ^{(p^2-1)/2}}\ep^2
=-\sum _{\ep=1} ^{(p^2-1)/2}\ep^2
+\sum _{\ep=1} ^{(p-1)/2}(p\ep)^2
=-\frac {p^2(p^2-1)(p^2+1)} {24}+p^2
\frac {p(p-1)(p+1)} {24}.$$
Clearly, this is divisible by $p^2$ for all primes $p\ge5$.
\end{proof}

\begin{lem} \label{lem:congH}
For all primes $p\ge5$ and positive integers $N$, we have
\begin{equation} \label{eq:congH1}
pH_{pN}\equiv H_N\mod p^4\mathbb Z_p
\end{equation}
if and only if $p$ is a Wolstenholme prime or $p$ divides $N$.
\end{lem}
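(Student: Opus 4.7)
The strategy is to follow the symmetric-pairing computation of Lemmas~\ref{lem:W1} and \ref{lem:W3}, but track the terms exactly modulo $p^{4}$ rather than merely bounding valuations. First, since the contribution of the multiples of $p$ to $pH_{pN}$ equals $H_N$ exactly, one has
$$pH_{pN}-H_N = p\underset{p\nmid\epsilon}{\sum_{\epsilon=1}^{pN}}\frac{1}{\epsilon} = p\sum_{r=1}^{N}\sum_{\epsilon=1}^{p-1}\frac{1}{(r-1)p+\epsilon},$$
and the pairing $\epsilon\leftrightarrow p-\epsilon$ inside each block of $p$ consecutive integers converts this into
$$pH_{pN}-H_N = p^{2}\sum_{r=1}^{N}(2r-1)\sum_{\epsilon=1}^{(p-1)/2}\frac{1}{((r-1)p+\epsilon)(rp-\epsilon)}.$$

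Next, I plan to expand the inner summand modulo $p^{2}$. From the identity
$$((r-1)p+\epsilon)(rp-\epsilon) = -\epsilon^{2}\Bigl(1-\frac{p}{\epsilon}-\frac{r(r-1)p^{2}}{\epsilon^{2}}\Bigr),$$
a geometric series expansion yields
$$\frac{1}{((r-1)p+\epsilon)(rp-\epsilon)}\equiv -\frac{1}{\epsilon^{2}}-\frac{p}{\epsilon^{3}}\pmod{p^{2}\mathbb{Z}_{p}}.$$
Using $\sum_{r=1}^{N}(2r-1)=N^{2}$ and multiplying by the outer $p^{2}$, this produces
$$pH_{pN}-H_{N}\equiv -N^{2}p^{2}\sum_{\epsilon=1}^{(p-1)/2}\frac{1}{\epsilon^{2}}-N^{2}p^{3}\sum_{\epsilon=1}^{(p-1)/2}\frac{1}{\epsilon^{3}}\pmod{p^{4}\mathbb{Z}_{p}}.$$
The key recognition is that the very same pairing applied to $H_{p-1}$ (as in Lemma~\ref{lem:W1}) gives $H_{p-1}\equiv -p\sum_{\epsilon}\epsilon^{-2}-p^{2}\sum_{\epsilon}\epsilon^{-3}\pmod{p^{3}\mathbb{Z}_{p}}$, and multiplying by $N^{2}p$ shows that the right-hand side above equals $N^{2}pH_{p-1}$ modulo $p^{4}\mathbb{Z}_{p}$. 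Therefore
$$pH_{pN}-H_{N}\equiv N^{2}\,p\,H_{p-1}\pmod{p^{4}\mathbb{Z}_{p}}.$$

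The ``if and only if'' is then immediate. Wolstenholme's theorem (the case $r=1$ of Lemma~\ref{lem:W1}) gives $v_{p}(H_{p-1})\ge 2$ for $p\ge 5$, so $v_{p}(N^{2}pH_{p-1})=2v_{p}(N)+1+v_{p}(H_{p-1})$ is at least $4$ precisely when $v_{p}(N)\ge 1$ or $v_{p}(H_{p-1})\ge 3$; equivalently, precisely when $p\mid N$ or $p$ is a Wolstenholme prime. The only delicate point to watch is the $p$-adic precision: the $p^{2}/\epsilon^{4}$-correction in the geometric expansion contributes at order $p^{4}$ after multiplication by the outer factor of $p^{2}$, so it is absorbed into the modulus and no further terms need to be tracked, nor is there any hidden cancellation to uncover.
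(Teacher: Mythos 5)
Your proof is correct, and both you and the paper establish the same intermediate congruence $pH_{pN}-H_N\equiv N^2pH_{p-1}\pmod{p^4\mathbb Z_p}$ before concluding via Wolstenholme's theorem, but the bookkeeping is genuinely different in order of operations. The paper expands each summand $(rp-p+\epsilon)^{-1}$ around $\epsilon^{-1}$ \emph{first}, obtaining the $N$-dependent combination $NH_{p-1}-p\binom N2H_{p-1}^{(2)}+p^2\tfrac{N(N-1)(2N-1)}6H_{p-1}^{(3)}$ modulo $p^3$, and only afterwards applies the $\epsilon\leftrightarrow p-\epsilon$ pairing to $H_{p-1}$ alone to derive the auxiliary relation $H_{p-1}\equiv -\tfrac p2H_{p-1}^{(2)}\pmod{p^3}$ needed to collapse that combination to $N^2H_{p-1}$. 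You instead pair the main double sum from the outset, which has the payoff that the geometric expansion of the paired summand is $r$-independent modulo $p^2\mathbb Z_p$ (the surviving $r$-dependence lives entirely in the factor $2r-1$), so the sum over $r$ collapses at once via $\sum_{r=1}^N(2r-1)=N^2$, and the higher harmonic numbers $H_{p-1}^{(2)}$, $H_{p-1}^{(3)}$ never enter. The identification of the resulting expression with $N^2pH_{p-1}$ is then just the same pairing expansion specialised to $H_{p-1}$, which you correctly carry to order $p^3$ (your formula $H_{p-1}\equiv -p\sum_{\epsilon\le(p-1)/2}\epsilon^{-2}-p^2\sum_{\epsilon\le(p-1)/2}\epsilon^{-3}\pmod{p^3}$ is equivalent to the paper's \eqref{eq:HnHn2}). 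Your route is somewhat leaner since the pairing kills the $r$-dependence up front rather than having to untangle it afterwards; the paper's version has the advantage of slotting uniformly into the framework already set up for Lemmas~\ref{lem:W1} and \ref{lem:W3}.
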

\begin{proof}
Using a rearrangement in the spirit of Lemma~\ref{lem:W1}, we obtain
$$pH_{pN}-H_N=
p\sum _{r=1} ^{N}\sum _{\ep=1} ^{p-1}\frac {1} {rp-p+\ep}.
$$
We consider the sum over $r$ in $\mathbb Z/p^3\mathbb Z$. This leads
to
\begin{align} \notag
\sum _{r=1} ^{N}\sum _{\ep=1} ^{p-1}(rp-p+\ep)^{-1}
&\equiv
\sum _{r=1} ^{N}\sum _{\ep=1} ^{p-1}\ep^{-1}\big(1+p(r-1)\ep^{-1}\big)^{-1}\\
\notag
&\equiv
\sum _{r=1} ^{N}\sum _{\ep=1} ^{p-1}
\left(\ep^{-1}-p(r-1)\ep^{-2}+p^2(r-1)^2\ep^{-3}\right)\\
&\equiv NH_{p-1}-p\binom N2H_{p-1}^{(2)}+p^2\frac {N(N-1)(2N-1)}
{6}H_{p-1}^{(3)}\kern-5pt\mod \mathbb Z/p^3\mathbb Z,
\label{eq:p3a}
\end{align}
where $H_{m}^{(\al)}$ denotes the higher harmonic number defined by
$H_{m}^{(\al)}=\sum_{n=1}^{m} \frac{1}{n^\al}$. By a rearrangement
analogous to the one in the proof of Lemma~\ref{lem:W1}, one sees
that $v_p(H_{p-1}^{(3)})\ge1$, whence we may disregard the last term
in the last line of \eqref{eq:p3a}. As it turns out, $H_{p-1}$ and
$H_{p-1}^{(2)}$ are directly related modulo $\mathbb Z/p^3\mathbb Z$.
Namely, we have
\begin{align} 
\notag
H_{p-1}&\equiv\sum _{\ep=1} ^{p-1}\ep^{-1}
\equiv 2^{-1}\sum _{\ep=1} ^{p-1}\left(\ep^{-1} + (p-\ep)^{-1}\right)
\equiv p2^{-1}\sum _{\ep=1} ^{p-1}\big(\ep(p-\ep)\big)^{-1}\\
\notag
&\equiv -p2^{-1}\sum _{\ep=1} ^{p-1}\ep^{-2}\big(1+p\ep^{-1}\big)
\equiv -p2^{-1}\sum _{\ep=1} ^{p-1}\big(\ep^{-2}+p\ep^{-3}\big)\\
&\equiv -p2^{-1}H_{p-1}^{(2)}-p^22^{-1}H_{p-1}^{(3)}\mod \mathbb Z/p^3\mathbb Z.
\label{eq:HnHn2}
\end{align}
Again, we may disregard the last term. If we substitute this
congruence in \eqref{eq:p3a}, then we obtain
$$
\sum _{r=1} ^{N}\sum _{\ep=1} ^{p-1}(rp-p+\ep)^{-1}
\equiv 
N^2H_{p-1}
\mod \mathbb Z/p^3\mathbb Z.
$$
Hence, the sum is congruent to zero 
modulo~$\mathbb Z/p^3\mathbb Z$ if and only
if $N$ is divisible by $p$
(recall Lemma~\ref{lem:W1} with $r=1$) or $v_p(H_{p-1})\ge3$,
that is, if $p$ is a Wolstenholme prime.
This establishes the assertion of the lemma.
\end{proof}

\section{The sequence $(t_N)_{N\ge1}$}
\label{sec:DworkKont}

The purpose of this section is to report on the evidence for our
conjecture that the largest integer $t_N$ such that
$q_{N,(N)}(z)^{1/t_N}\in\mathbb Z[[z]]$ is given by $t_N=\Xi_NN!$, 
that is, that Theorem~\ref{thm:3} with $k=1$ is optimal.
We assume $k=1$ throughout this section.

We first prove that there cannot be any prime number $p$ larger than
$N$ which divides $t_N$.

\begin{prop} \label{prop:p>N}
Let $p$ be a prime number and $N$ a positive integer with $p>N$.
Then there exists a positive integer $a<p$ such that
\begin{equation} \label{eq:p>N} 
v_p\big(B_{N}(a)H_{Na}\big)=0,
\end{equation}
where $B_N(m)=\frac {(Nm)!} {m!^N}$.
In particular, $p$ does not divide $t_N$. 
\end{prop}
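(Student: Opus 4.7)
The plan is to exhibit a specific $a$ with $1\le a<p$ for which $v_p(B_N(a))$ and $v_p(H_{Na})$ are computable explicitly and cancel. For $N=1$, one simply takes $a=1$: then $B_1(1)=1$ and $H_1=1$, so $v_p(B_1(1)H_1)=0$. For $N\ge2$ (so that $p>N\ge2$, hence $p\ge3$), I would take $a=\lceil p/N\rceil$. Since $p/N>1$, we have $a\ge 2\ge1$, while $a\le p/N+1\le p/2+1<p$, so $1\le a<p$ as required.

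For this choice, the definition of the ceiling gives $p\le Na<p+N<2p$, hence $\lfloor Na/p\rfloor=1$ and $\lfloor Na/p^\ell\rfloor=0$ for $\ell\ge2$. Legendre's formula (combined with $a<p$, so $v_p(a!)=0$) then yields
\[
v_p(B_N(a))=v_p\!\left(\frac{(Na)!}{a!^N}\right)=\sum_{\ell\ge1}\Big(\big\lfloor Na/p^\ell\big\rfloor-N\big\lfloor a/p^\ell\big\rfloor\Big)=1.
\]
For the harmonic number, I would split off the terms divisible by $p$:
\[
H_{Na}=\frac{1}{p}H_{\lfloor Na/p\rfloor}+\underset{p\nmid i}{\sum_{i=1}^{Na}}\frac{1}{i}=\frac{1}{p}H_1+S=\frac{1}{p}+S,
\]
where $S\in\mathbb Z_p$ because each summand has denominator coprime to $p$. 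Therefore $v_p(H_{Na})=-1$ and $v_p(B_N(a)H_{Na})=1+(-1)=0$, establishing \eqref{eq:p>N}.

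For the ``in particular'' clause, I would apply Lemma~\ref{lem:4} with $f=F_{(N)}$, $g=G_{N,(N)}$, $\tau=t_N$. By definition of $t_N$ we have $q_{N,(N)}(z)^{1/t_N}\in1+z\mathbb Z[[z]]\subset1+z\mathbb Z_p[[z]]$, so the lemma forces
\[
F_{(N)}(z)\,G_{N,(N)}(z^p)-p\,F_{(N)}(z^p)\,G_{N,(N)}(z)\in pt_Nz\mathbb Z_p[[z]].
\]
For $1\le a<p$, the coefficient of $z^a$ in $F_{(N)}(z)G_{N,(N)}(z^p)$ vanishes (since $G_{N,(N)}(z^p)$ contains only powers $z^{jp}$ with $j\ge1$), while the coefficient of $z^a$ in $F_{(N)}(z^p)G_{N,(N)}(z)$ equals $[z^a]G_{N,(N)}(z)=B_N(a)H_{Na}$ (only the constant term of $F_{(N)}(z^p)$ contributes). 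Hence the $z^a$-coefficient of the whole expression is $-pB_N(a)H_{Na}$, and membership in $pt_N\mathbb Z_p$ gives $v_p(B_N(a)H_{Na})\ge v_p(t_N)$. Inserting the $a$ produced above yields $v_p(t_N)\le0$, i.e.\ $p\nmid t_N$.

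There is no real obstacle here: once one has the right choice $a=\lceil p/N\rceil$, which is engineered precisely so that $\lfloor Na/p\rfloor=1$, the two valuations are forced to be $+1$ and $-1$ and cancel. The only minor verifications are the inequalities $1\le\lceil p/N\rceil<p$ for $N\ge2$, and the separate (trivial) treatment of the case $N=1$.
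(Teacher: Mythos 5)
Your proof is correct and follows essentially the same approach as the paper: the choice $a=\lceil p/N\rceil$ is exactly ``the least integer such that $aN\ge p$'' used there, and the valuation computation $v_p(B_N(a))=1$, $v_p(H_{Na})=-1$ is identical. For the ``in particular'' part you unwind Lemma~\ref{lem:4} explicitly rather than citing the congruence \eqref{eq:Ccong} from the proof outline, but the underlying argument -- that the coefficient $-pB_N(a)H_{Na}$ of $z^a$ has $p$-adic valuation exactly $1$, forcing $p\nmid t_N$ -- is the same.
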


\begin{proof}
If $N=1$, we choose $a=1$ to obtain $B_{1}(1)H_{1}=1$. On the other
hand, if $N>1$, we choose $a$ to be the least integer such that $aN\ge p$.
Since then $a<p$, we have
$$v_p\big(B_{N}(a)H_{Na}\big)=
\sum _{\ell = 1}^{\infty}\left(\fl{\frac {Na} {p^\ell}}-
N\fl{\frac {a} {p^\ell}}\right)
+v_p(H_{Na})=1-1=0.$$

To see that $p$ cannot divide $t_N$, we observe that
we have $C(a)=-pB_{N}(a)H_{Na}$ (for the sum
$C(\cdot)$ defined in \eqref{eq:C} in the case $k=1$) because $a<p$. 
Hence, the assertion \eqref{eq:p>N} can be
reformulated as $v_p\big(C(a)\big)=1$.
Since, because of $p>N$,
we have $v_p(N!)=0$ and $v_p(\Xi_N)=0$, it follows that 
$$C(a)\notin p^2\Xi_N N!\, \mathbb Z_p.$$
This means that 
one cannot increase the exponent of $p$ in \eqref{eq:Ccong} 
(with $k=1$) in our special case, and thus $p$ cannot divide $t_N$.
\end{proof}

So, if we hope to improve Theorem~\ref{thm:3} with $k=1$, then it must be by
increasing exponents of prime numbers $p\le N$ in \eqref{eq:Xi}. 
It can be checked directly that the exponent of $3$ cannot be
increased if $N=7$. (The reader should recall Remarks~\ref{rem:Xi7}(a)
in the Introduction.)
According to Remarks~\ref{rem:Xi7}(b), an
improvement is therefore only possible if $v_p(H_N)>2$ for some $p\le N$. 
Lemmas~\ref{lem:H_L}--\ref{lem:5} in Section~\ref{sec:aux} tell
that this does not happen with $p=2,3,5$, so that the exponents of
$2,3,5$ cannot be improved. 
(The same conclusion can also be drawn from \cite{boyd} for many
other prime numbers, but so far not for $83$, for example.)

We already discussed in Remarks~\ref{rem:Xi7}(c) 
whether there are any primes $p$ and integers $N$
such that $p\le N$ and $v_p(H_N)\ge3$. We recall that, so far, only five
examples are known, four of them involving $p=11$. 

The final result of this section shows that, if $v_p(H_N)=3$, the exponent
of $p$ in the definition of $\Xi_N$ in \eqref{eq:Xi} cannot be increased
so that Theorem~\ref{thm:3} would still hold. 
(The reader should recall Remarks~\ref{rem:Xi7}(b).)
\begin{prop} \label{prop:vp=3}
Let $p$ be a prime number and $N$ a positive integer with $p\le N$
and $v_p(H_N)=3$. If $p$ is not a Wolstenholme prime and $p$ does not
divide $N$, then\break
$q_{N,(N)}(z)^{1/p\Xi_N N!}\notin \mathbb Z[[z]]$.
\end{prop}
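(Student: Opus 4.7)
The plan is to invert the reduction of Section~\ref{sec:2}: by Lemma~\ref{lem:4} applied with $\tau = p\Xi_N N!$, the assertion $q_{N,(N)}(z)^{1/(p\Xi_N N!)}\notin\mathbb{Z}[[z]]$ will follow once we exhibit a single index $m\geq 1$ for which the $m$-th coefficient $C(m)$ of $F_N(z)G_{N,(N)}(z^p) - pF_N(z^p)G_{N,(N)}(z)$ satisfies $v_p(C(m)) < v_p(p^2\Xi_N N!) = 4+v_p(N!)$; here we used that $v_p(\Xi_N)=2$ under our hypotheses, by the definition \eqref{eq:Xi}. Our candidate is $m=p$, i.e.\ $(a,K)=(0,1)$ in~\eqref{eq:C}. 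Direct substitution (using $B_N(0)=1$) gives
$$C(p) = N!\,H_N - p\,B_N(p)\,H_{Np},$$
and by Lemma~\ref{lem:multinomial/N!} the integer $E := B_N(p)/N!$ has $v_p(E)=0$.

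Next we would isolate the exact $p$-adic structure of the two harmonic numbers. Write $H_N = p^3 u$ with $u\in\mathbb{Z}_p^\times$, which is possible because $v_p(H_N)=3$. Corollary~\ref{cor:W2} gives $v_p(pH_{Np}-H_N)\geq 3$, while the hypotheses that $p$ is not a Wolstenholme prime and $p\nmid N$ allow us to invoke Lemma~\ref{lem:congH} to \emph{exclude} the congruence $pH_{Np}\equiv H_N\pmod{p^4}$. Combined, this says $pH_{Np}-H_N = p^3\delta$ with $\delta\in\mathbb{Z}_p^\times$, so that $pH_{Np}=p^3(u+\delta)$. Substituting this into the formula for $C(p)$ and using $B_N(p)=EN!$, we obtain the compact expression
$$C(p) = p^3\,N!\,\bigl[u(1-E) - E\delta\bigr].$$

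What remains, and what we expect to be the crux of the argument, is the congruence $E\equiv 1\pmod p$. We would prove it by factoring out the multiples of $p$ in $(Np)!$: from $(Np)! = p^N N!\prod_{j=0}^{N-1}\prod_{i=1}^{p-1}(jp+i)$ and $(p!)^N = p^N((p-1)!)^N$, one obtains the product representation
$$E = \prod_{j=0}^{N-1}\frac{\prod_{i=1}^{p-1}(jp+i)}{(p-1)!},$$
in which each factor is $\equiv 1\pmod p$ because $\prod_{i=1}^{p-1}(jp+i)\equiv(p-1)!\pmod p$. With this in hand, the bracket $u(1-E)-E\delta$ reduces modulo $p$ to $-\delta$, a unit, so $v_p(C(p)) = 3+v_p(N!) < 4+v_p(N!)$, and Lemma~\ref{lem:4} then delivers $q_{N,(N)}(z)^{1/(p\Xi_N N!)}\notin\mathbb{Z}_p[[z]]$, hence not in $\mathbb{Z}[[z]]$. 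The genuine obstacle is to pin down $E$ modulo $p$ exactly: elsewhere in the paper only crude lower bounds on $v_p(B_N(p))$ are needed, but here the non-cancellation between the two summands of the bracket hinges on knowing the precise residue of $E$, and it is the factorisation above that supplies it.
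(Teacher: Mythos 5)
Your proof is correct and takes essentially the same route as the paper: both isolate the coefficient $C(p)$, use the same decomposition $C(p)=H_N\bigl(B_N(1)-B_N(p)\bigr)+B_N(p)\bigl(H_N-pH_{pN}\bigr)$ (your factoring out of $p^3N!$ and showing the bracket is a unit is this very decomposition written in unit form), and invoke Wilson's theorem for the $B_N$-factor together with Lemma~\ref{lem:congH} for the harmonic-number difference. The one small point you leave implicit is that Lemma~\ref{lem:congH} and the strong form of Corollary~\ref{cor:W2} require $p\ge5$; this needs to be discharged, and easily is, since $v_p(H_N)=3$ forces $p\ge7$ by Lemmas~\ref{lem:H_L}--\ref{lem:5}.
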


\begin{proof}
We assume that $p$ is not a Wolstenholme prime and that $p$ does not
divide $N$. In particular, this implies $\xi(p,N)=0$ and thus also
$v_p(\Xi_N)=2$.
By Lemmas~\ref{lem:H_L}--\ref{lem:5}, we can furthermore
assume that $p\ge7$.

Going back to the outline of the proof of Theorem~\ref{thm:3} in
Section~\ref{sec:2}, we claim that
\begin{equation} \label{eq:Cp}
C(p)=\big(B_N(1)H_N-B_N(p)pH_{Np}\big)\notin p^4 N!\,\mathbb{Z}_p,
\end{equation}
where $B_N(m)=\frac {(Nm)!} {m!^N}$.
(The claim here is the non-membership relation; the equality holds by
the definition of $C(\cdot)$ in \eqref{eq:Ccong}.)
This would imply that $C(p)\notin p^2 \Xi_NN!\,\mathbb Z_{p}$, 
and thus, by Lemma~\ref{lem:4}
(recall the outline of the proof of Theorem~\ref{thm:3} 
in Section~\ref{sec:2}), that
$q_{N,(N)}(z)^{1/p\Xi_N N!}\notin \mathbb Z[[z]]$, as desired.

To establish \eqref{eq:Cp}, we consider 
\begin{multline*}
H_N(B_N(1)-B_N(p))\\
=H_NN!\bigg(1-\frac 1{(p-1)!^N}\big(1\cdot 2\cdots (p-1)\big)
\big((p+1)\cdot (p+2)\cdots (2p-1)\big)\cdots\\
\times\cdots
\big((pN-p+1)\cdot (pN-p+2)\cdots (pN-1)\big) \bigg).
\end{multline*}
Using $v_{p}(H_N)=3$ and Wilson's theorem, we deduce 
\begin{equation} \label{eq:Zp} 
H_N(B_N(1)-B_N(p))\in p^4N!\,\mathbb Z_{p}.
\end{equation}
However, by Lemma~\ref{lem:congH} and the fact that 
$v_{p}\big(B_N(p)\big)=v_{p}\big(B_N(1)\big)=v_{p}(N!)$, we obtain
$$B_N (1)H_N-B_N(p)pH_{pN}\hbox{${}\not\equiv{}$}
H_N(B_N(1)-B_N(p))\mod p^4N!\,\mathbb Z_{p}.$$
Together with \eqref{eq:Zp}, this yields \eqref{eq:Cp}.
\end{proof}

To summarise the discussion of this section: if the conjecture in
Remarks~\ref{rem:Xi7}(c) that no prime $p$ and integer $N$ exists with
$v_p(H_N)\ge4$ should be true, then Theorem~\ref{thm:3} with $k=1$ is
sharp, that is, the sequence $(t_N)_{N\ge1}$ is given by
$t_N=\Xi_NN!$.

\section{Sketch of the proof of Theorem~\ref{thm:3a}}
\label{sec:Om}

In this section we discuss the proof of Theorem~\ref{thm:3a}.
Since it is completely analogous to the proof of Theorem~\ref{thm:3}
(see Section~\ref{sec:2}), we content ourselves with pointing out the
differences. At the end of the section, we present analogues of
Propositions~\ref{prop:p>N} and \ref{prop:vp=3}, addressing the
question of optimality of Theorem~\ref{thm:3a} with $k=1$.

First of all, by \eqref{eq:truemap}, we have
$$
\big(z^{-1}q_{\mathbf N}(z)\big)^{1/kN}=\exp(\widetilde G_{\mathbf
N}(z)/F_{\mathbf N}(z)),
$$
where $F_{\mathbf N}(z)$ is the series from the Introduction and
\begin{equation*}
\widetilde G_{\mathbf N}(z):=\sum_{m=1}^{\infty} \frac{(Nm)!^k}{m!^{kN}}
\big(H_{Nm}-H_m\big)\,z^m.
\end{equation*}

We must adapt the proof of Theorem~\ref{thm:3}, as outlined in
Section~\ref{sec:2}. Writing as before
$B_{\mathbf N}(m)=\frac{(Nm)!^k}{m!^{kN}}$, we must consider the sum
\begin{multline} \label{eq:Summe}
\widetilde C(a+Kp):=\sum_{j=0}^K B_{\mathbf N}(a+jp)B_{\mathbf N}(K-j)
\big((H_{N(K-j)}-H_{K-j})-p(H_{Na+Njp}-H_{a+jp})\big)  \\
=\sum_{j=0}^K B_{\mathbf N}(a+jp)B_{\mathbf N}(K-j)
(H_{N(K-j)}-pH_{Na+Njp}) \kern4cm\\-
\sum_{j=0}^K B_{\mathbf N}(a+jp)B_{\mathbf N}(K-j)
(H_{K-j}-pH_{a+jp}) 
\end{multline}
and show that it is in $p\Om_NN!^k\mathbb Z_p$ for all primes $p$, and for all 
non-negative integers $K$, $a$, and $j$ with $0\le a<p$. 
The special cases $K=a=0$, respectively $K=0$
and $a=1$, are equally simple to be handled directly here. 
We leave their verification to the reader and assume $a+Kp\ge2$ from now on.

If we now go through the outline of the proof of Theorem~\ref{thm:3} in
Section~\ref{sec:2}, we see that the crucial steps are the congruence
\eqref{eq:firstreduction}, and Lemmas~\ref{lem:12}--\ref{lem:11}.
The analogue
of \eqref{eq:firstreduction} in our context is the assertion that
\begin{multline}\label{eq:firstreductionII}
\widetilde C(a+Kp) \equiv \sum_{j=0}^K B_{\mathbf N}(a+jp)B_{\mathbf
N}(K-j) 
\big((H_{N(K-j)}-H_{K-j})-(H_{\lfloor
Na/p\rfloor+Nj}-H_j)\big)\\ \mod p\Om_N N!^k\mathbb{Z}_p
\end{multline}
for any
non-negative integers $m$, $K$, $a$, and $j$ with $0\le a<p$.
This follows easily from the congruence \eqref{eq:J} and
Lemma~\ref{lem:multinomial/N!} as long as $v_p(H_N-1)\le 0$. If
$v_p(H_N-1)>0$, then we use Lemma~\ref{lem:p<L2} below to conclude that
$N/p\ge4$, which together with Lemma~\ref{lem:multinomial/N!} and
Corollary~\ref{cor:C1} in Section~\ref{sec:aux} yields 
\begin{multline}\label{eq:firstreductionUW2}
\widetilde C(a+Kp) \equiv \sum_{j=0}^K B_{\mathbf N}(a+jp)B_{\mathbf N}(K-j) 
\big((H_{N(K-j)}-H_{K-j})-(H_{\lfloor
Na/p\rfloor+Nj}-H_j)\big)\\
\mod p^5 N!^k\mathbb{Z}_p
\end{multline}
as long as $a+Kp\ge2$. This is more than we actually need to establish
\eqref{eq:firstreductionII} in this case also.

The analogue of Lemma~\ref{lem:12} in our context is
\begin{equation} \label{eq:congrconj1U2}
B_{\mathbf N}(a+pj)\left(H_{Nj+\lfloor Na/p\rfloor} - H_{Nj}\right)
\in p \Om_N N!^k \mathbb{Z}_p\ ,
\end{equation}
which we must prove for any prime $p$, and non-negative integers $a$, $j$
with $0\le a<p$. We may use Lemma~\ref{lem:10} as it stands, but
instead of Lemma~\ref{lem:11} we should prove
\begin{equation} \label{eq:110U2}
B_{\mathbf N}(m)\big((H_{Nmp^s}-H_{mp^s})-
(H_{N\fl{m/p}p^{s+1}}-H_{\fl{m/p}p^{s+1}})\big)\in 
p^{-s}\Om_N N!^k \mathbb Z_p\ .
\end{equation}
Both \eqref{eq:congrconj1U2} and \eqref{eq:110U2} 
can be proved in a very similar way as we proved
Lemmas~\ref{lem:12} and \ref{lem:11} in Sections~\ref{sec:4} and
\ref{sec:6}, respectively. Indeed, as long as $v_p(H_N-1)\le 0$, there
is nothing to prove, since in this case, by the definitions of $\Om_N$
and $\Xi_N$, Lemma~\ref{lem:12} implies \eqref{eq:congrconj1U2}
immediately, as well as Lemma~\ref{lem:11} implies \eqref{eq:110U2}.

If, on the other hand, $v_p(H_N-1)>0$, then we need
substitutes for Lemmas~\ref{lem:H_L}--\ref{lem:p<L}, which were used
to accomplish the proofs of Lemmas~\ref{lem:12} and \ref{lem:11} in
the case that $v_p(H_N)>0$.

\begin{lem} \label{lem:H_L2}
For all positive integers $N\ge2$, we have $v_2(H_N-1)=-\fl{\log_2N}$.
\end{lem}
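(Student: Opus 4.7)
The plan is to deduce Lemma~\ref{lem:H_L2} immediately from Lemma~\ref{lem:H_L} via the ultrametric (non-Archimedean) property of the $p$-adic valuation. Recall that if $\alpha,\beta\in\mathbb{Q}_p$ satisfy $v_p(\alpha)\ne v_p(\beta)$, then $v_p(\alpha-\beta)=\min\{v_p(\alpha),v_p(\beta)\}$.

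First I would invoke Lemma~\ref{lem:H_L} to write $v_2(H_N)=-\lfloor\log_2 N\rfloor$. Since $N\ge 2$, we have $\lfloor\log_2 N\rfloor\ge 1$, and hence $v_2(H_N)\le -1$. In particular $v_2(H_N)<0=v_2(1)$, so the two valuations are distinct. Applying the ultrametric equality case, I would then conclude
\[
v_2(H_N-1)=\min\{v_2(H_N),\,v_2(1)\}=v_2(H_N)=-\lfloor\log_2 N\rfloor,
\]
which is exactly the statement of the lemma.

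There is no real obstacle here: the only thing to check is the strict inequality $v_2(H_N)<v_2(1)$, which holds as soon as $N\ge 2$, and this is precisely the hypothesis. The lemma is therefore a one-line corollary of Lemma~\ref{lem:H_L} together with the strong triangle inequality.
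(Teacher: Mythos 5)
Your proof is correct. The ultrametric argument goes through cleanly: for $N\ge 2$ one has $\lfloor\log_2 N\rfloor\ge 1$, hence $v_2(H_N)\le -1<0=v_2(1)$ by Lemma~\ref{lem:H_L}, and the equality case of the strong triangle inequality gives $v_2(H_N-1)=v_2(H_N)=-\lfloor\log_2 N\rfloor$.

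The paper does not write out a proof of Lemma~\ref{lem:H_L2}; it simply asserts that it ``can be proved in exactly the same way as'' Lemma~\ref{lem:H_L}, i.e.\ by redoing the standard argument on the truncated sum $H_N-1=\sum_{j=2}^N 1/j$: the unique term of minimal $2$-adic valuation among $1/2,\dots,1/N$ is $1/2^{\lfloor\log_2 N\rfloor}$, so the valuation of the sum equals $-\lfloor\log_2 N\rfloor$. Your route is a genuine shortcut: instead of re-running that argument, you take Lemma~\ref{lem:H_L} as a black box and apply the ultrametric inequality once more at the level of $H_N$ versus $1$. Both proofs ultimately rest on the same non-Archimedean mechanism, but yours is more economical and makes the dependence on Lemma~\ref{lem:H_L} explicit, whereas the paper's intended argument is self-contained. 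One small point worth noting explicitly in your writeup is that your argument is exactly where the hypothesis $N\ge2$ is used (to force $\lfloor\log_2 N\rfloor\ge1$, hence a strict inequality of valuations); for $N=1$ the statement fails since $H_1-1=0$.
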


\begin{lem} \label{lem:32}
We have $v_3(H_{66}-1)=v_3(H_{68}-1)=1$.
For positive integers $N\notin\{1,66,68\}$, we have $v_3(H_N-1)\le0$.
\end{lem}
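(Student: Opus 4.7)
The plan is to reduce the problem to a finite list of candidates by exploiting the $3$-adic recursion
$$H_N \,=\, \tfrac{1}{3}H_{\fl{N/3}} + S_N,\qquad S_N:=\underset{3\nmid\ep}{\sum_{\ep=1}^N}\frac{1}{\ep}\;\in\;\mathbb{Z}_3,$$
and then to verify each case by elementary computations modulo suitable powers of $3$.

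First I would observe that $v_3(H_N-1)>0$ forces $v_3(H_N)\ge 0$, which via the decomposition above requires $v_3(H_{\fl{N/3}})\ge 1$. Invoking Lemma~\ref{lem:3}, this restricts $\fl{N/3}$ to $\{0,2,7,22\}$, so $N$ must lie in the finite set
$$\{0,1,2\}\cup\{6,7,8\}\cup\{21,22,23\}\cup\{66,67,68\}.$$
The value $N=1$ yields $H_1-1=0$, the trivial case already excluded from the statement. The candidates with $2\le N\le 23$ are disposed of by working modulo $3$: using $\ep^{-1}\bmod 3$ together with the explicit value $H_7=363/140$, one checks by a short tally that $H_N-1\not\equiv 0\pmod 3$ in each case (e.g.\ $H_6-1\equiv 1$ and $H_{22}-1\equiv 2\pmod 3$).

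The three remaining candidates $N\in\{66,67,68\}$ require passing to modulo $9$. Here I would first compute $H_{22}$ modulo $27$ (equivalently, $\tfrac{1}{3}H_{22}$ modulo $9$) from $H_{22}=\tfrac{1}{3}H_7+S_{22}$, using the explicit value of $H_7$ and the tabulated inverses $2^{-1}\equiv 5$, $4^{-1}\equiv 7$, $5^{-1}\equiv 2$, $7^{-1}\equiv 4$, $8^{-1}\equiv 8$ modulo $9$. Substituting into $H_{66}=\tfrac{1}{3}H_{22}+S_{66}$ and then using $H_{67}=H_{66}+1/67$ and $H_{68}=H_{67}+1/68$, one verifies that both $H_{66}-1$ and $H_{68}-1$ are divisible by $3$ but not by $9$, while $H_{67}-1$ is a unit modulo $3$ (indeed $H_{67}-1\equiv (H_{66}-1)+1/67\equiv 0+1\pmod 3$). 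This yields exactly $v_3(H_{66}-1)=v_3(H_{68}-1)=1$ and $v_3(H_{67}-1)=0$, completing the proof.

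The only real obstacle is bookkeeping: the mod-$9$ evaluation of $S_{66}$ requires grouping the $\ep\in\{1,\ldots,66\}$ coprime to $3$ by residue class modulo $9$ and summing the six corresponding inverses, which is tedious but presents no conceptual difficulty. This is precisely the same level of elementary $3$-adic manipulation as that behind Lemma~\ref{lem:3}, which the authors likewise dispatched without further comment.
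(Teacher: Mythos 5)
Your approach is correct and is essentially the one the paper itself indicates: the authors explicitly state that Lemma~\ref{lem:32} is proved "in exactly the same way as" Lemma~\ref{lem:3}, and the method the paper uses (see the proof of Lemma~\ref{lem:p<L}) is precisely your $3$-adic recursion $H_N=\tfrac13 H_{\fl{N/3}}+S_N$ with $S_N\in\mathbb Z_3$, reducing to the finite candidate set $\fl{N/3}\in\{0,2,7,22\}$ via Lemma~\ref{lem:3} and then checking the handful of survivors modulo small powers of $3$. The arithmetic you sketch indeed gives $H_{66}-1\equiv H_{68}-1\equiv 6\pmod 9$ and $H_{67}-1\equiv 1\pmod 3$, so there is no gap.
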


\begin{lem} \label{lem:52}
We have $v_5(H_3-1)=v_5(H_{21}-1)=v_5(H_{23}-1)=1$.
For positive integers $N\notin\{1,3,21,23\}$, we have $v_5(H_N-1)\le0$.
\end{lem}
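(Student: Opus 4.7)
\noindent\textbf{Proof plan for Lemma~\ref{lem:52}.}

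The plan is to mimic the argument for Lemma~\ref{lem:5}, exploiting the fact that the arithmetic of $H_N-1$ modulo powers of $5$ is governed by the arithmetic of $H_{\lfloor N/5\rfloor}$ modulo powers of $5$. Splitting off the multiples of $5$ in the harmonic sum, one has
$$
H_N - 1 \;=\; \frac{1}{5}\,H_{\lfloor N/5\rfloor} \;+\; \Big(\underset{5\nmid k}{\sum_{k=1}^{N}}\tfrac{1}{k} \;-\; 1\Big),
$$
and the parenthesised piece manifestly lies in $\mathbb{Z}_5$. Consequently, if $v_5(H_N-1)>0$, then necessarily $v_5(\tfrac{1}{5}H_{\lfloor N/5\rfloor})\ge 0$, i.e., $v_5(H_{\lfloor N/5\rfloor})\ge 1$.

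By Lemma~\ref{lem:5}, the latter forces $\lfloor N/5\rfloor \in \{0,4,20,24\}$ (with $\lfloor N/5\rfloor = 0$ corresponding to $H=0$). So every candidate $N$ with $v_5(H_N-1)>0$ must lie in the finite set
$$
\mathcal{S} \;=\; \{1,2,3,4\} \cup \{20,\dots,24\} \cup \{100,\dots,104\} \cup \{120,\dots,124\}.
$$
For $N\notin\mathcal S$, the same splitting actually yields the stronger bound $v_5(H_N-1) = v_5(\tfrac{1}{5}H_{\lfloor N/5\rfloor})\le -1$, since in that case the two summands sit in disjoint $5$-adic scales and the valuation of the sum equals the (strictly smaller) first valuation.

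What remains is to inspect the nineteen elements of $\mathcal S$. For $N\le 4$ the values $H_1-1=0$, $H_2-1=\tfrac12$, $H_3-1=\tfrac56$, $H_4-1=\tfrac{13}{12}$ may be read off directly, and already single out $N=3$. For each of the three ``upper'' blocks of five consecutive integers, the plan is to compute $H_{\lfloor N/5\rfloor}$ modulo $5^2$, divide by $5$, and then run the recursion $H_{N+1}-1 = (H_N-1) + 1/(N+1)$ across the block while working modulo $25$; this precision is needed in order to tell $v_5=1$ apart from $v_5\ge 2$. The verification then shows that among $\mathcal S$ only $N\in\{3,21,23\}$ achieve $v_5(H_N-1)\ge 1$, with equality in each case, every other $N\in\mathcal S$ yielding $v_5(H_N-1)=0$.

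The one point requiring a modicum of care is precisely this last modular bookkeeping in the two upper blocks, where one must keep track of inverses modulo $25$ rather than merely modulo $5$; this is nonetheless entirely routine (and could in any case be dispatched with a single computer algebra call, mirroring the treatment of the analogous Lemmas~\ref{lem:H_L}--\ref{lem:5} via the data in \cite{boyd}).
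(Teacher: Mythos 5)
Your plan is correct and follows essentially the same route the paper intends: the paper explicitly says Lemmas~\ref{lem:H_L2}--\ref{lem:p<L2} "can be proved in exactly the same way" as Lemmas~\ref{lem:H_L}--\ref{lem:p<L}, which in turn are stated with proofs omitted (referring to~\cite{boyd}), and the underlying technique is precisely the $5$-adic splitting $H_N - 1 = \tfrac{1}{5}H_{\lfloor N/5\rfloor} + \bigl(\sum_{5\nmid k,\,2\le k\le N}1/k\bigr)$ that you use. Your reduction to the $19$-element set via Lemma~\ref{lem:5} and the subsequent mod-$25$ verification are sound; the only caveat is that you leave the mod-$25$ bookkeeping as routine, which matches the paper's own (entirely omitted) treatment.
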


\begin{lem} \label{lem:p<L2}
Let $p$ be a prime, and let $N$ be an integer with $N\ge2$ and $p\le N$.
Then the following assertions hold true:

\begin{enumerate} 
\item If $v_p(H_N-1)>0$ then $N\ge 4p$.
\item If $v_p(H_N-1)>0$ and $p\ne5$ then $N\ge 6p$.
\end{enumerate}
\end{lem}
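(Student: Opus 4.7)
The plan is to mirror the proof of Lemma~\ref{lem:p<L} via the identity
\begin{equation*}
H_N - 1 = \frac{1}{p}H_{\fl{N/p}} + \underset{p\,\nmid\,i,\;i\ge 2}{\sum_{i=2}^N}\frac{1}{i},
\end{equation*}
obtained by splitting $H_N - 1 = \sum_{i=2}^N \frac{1}{i}$ according to whether $p$ divides the summation index. The residual sum lies in $\mathbb Z_p$, so the hypothesis $v_p(H_N - 1) > 0$ forces $v_p(H_{\fl{N/p}}) \ge 1$, and the entire task reduces to bounding $\fl{N/p}$ from below.

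The small primes are then disposed of by invoking the preceding lemmas of this section. For $p = 2$, Lemma~\ref{lem:H_L2} makes the hypothesis vacuous. For $p = 3$, Lemma~\ref{lem:32} gives the exhaustive list $N \in \{66,68\}$ for which $v_3(H_N - 1) > 0$, and both values exceed $6p = 18$. For $p = 5$, Lemma~\ref{lem:52} (combined with $p \le N$, so $N \ge 5$) restricts the relevant $N$ to $\{21, 23\}$, both of which exceed $4p = 20$; note that (2) excludes $p = 5$.

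For the remaining primes $p \ge 7$ I split according to the size of $\fl{N/p}$. If $\fl{N/p} \ge p$, then Lemma~\ref{lem:p<L}(1) applied to $H_{\fl{N/p}}$ yields $\fl{N/p} \ge 2p$, hence $N \ge 2p^2$, which is far stronger than either (1) or (2) requires. Otherwise $\fl{N/p} < p$, so $H_{\fl{N/p}} \in \mathbb Z_p$, and the condition $v_p(H_{\fl{N/p}}) \ge 1$ simply asks that $p$ divide the numerator of the reduced fraction $H_{\fl{N/p}}$. Inspecting
\begin{equation*}
H_1 = 1,\quad H_2 = \tfrac{3}{2},\quad H_3 = \tfrac{11}{6},\quad H_4 = \tfrac{25}{12},\quad H_5 = \tfrac{137}{60},
\end{equation*}
one sees that, among $p \ge 7$, the only pair with $p \mid \mathrm{num}(H_{\fl{N/p}})$ and $\fl{N/p} \in \{1,2,3\}$ is $(p, \fl{N/p}) = (11, 3)$ (so $N \in \{33, \dots, 43\}$), and the only additional pair with $\fl{N/p} \in \{4, 5\}$ (relevant for (2), given that $p \ne 5$) is $(p, \fl{N/p}) = (137, 5)$ (so $N \in \{685, \dots, 821\}$).

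What remains is the verification that $v_p(H_N - 1) \le 0$ on these two finite residual sets, and this is the main obstacle—an entirely computational one. For the eleven values with $p = 11$, one computes $\frac{1}{11}H_3 + \sum_{i} \frac{1}{i} \pmod{11}$ directly, a few inverses modulo~$11$ per value; for the $p = 137$ range the efficient route is the tabular data referenced in Remarks~\ref{rem:Om}(c). Conceptually, therefore, the lemma is a direct analogue of Lemma~\ref{lem:p<L}: subtracting the constant $1$ merely shifts which fractions $H_m$ can cause $v_p(H_N - 1)$ to be positive, and the rest of the argument is structurally identical.
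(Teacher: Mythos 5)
Your argument is correct and follows essentially the same route as the paper, which does not spell out a proof but simply remarks that Lemma~\ref{lem:p<L2} ``can be proved in exactly the same way as Lemmas~\ref{lem:H_L}--\ref{lem:p<L}'': you decompose $H_N-1$ to reduce the hypothesis to $v_p(H_{\fl{N/p}})>0$, dispose of $p\in\{2,3,5\}$ via Lemmas~\ref{lem:H_L2}--\ref{lem:52}, read off from $H_1,\dots,H_5$ that only $(p,\fl{N/p})\in\{(11,3),(137,5)\}$ survive, and defer those two finite ranges to direct computation, respectively the table of Remarks~\ref{rem:Om}(c) --- exactly the kind of table lookup the paper itself invokes in the proof of Lemma~\ref{lem:p<L}(4).
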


These results can be proved in exactly the same way as 
Lemmas~\ref{lem:H_L}--\ref{lem:p<L}, respectively.
In comparison to Lemma~\ref{lem:p<L}, the statement of
Lemma~\ref{lem:p<L2} is in fact simpler, so that complications
that arose in Section~\ref{sec:2}
(such as \eqref{eq:firstreductionU3}, for example) do not arise here.

Second, we need a substitute for Lemma~\ref{lem:congH}.

\begin{lem} \label{lem:congH2}
For all primes $p\ge5$ and positive integers $N$, we have
\begin{equation} \label{eq:congH12}
p(H_{pN}-H_p)\equiv H_N-1\mod p^4\mathbb Z_p
\end{equation}
if and only if $p$ is a Wolstenholme prime or $N\equiv\pm1$~{\em mod}~$p$.
\end{lem}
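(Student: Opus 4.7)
The plan is to mimic closely the proof of Lemma~\ref{lem:congH}, adapting the rearrangement and the modulo-$p^3$ expansion to the shifted quantity $H_N-1$ in place of $H_N$ and $H_p$ in place of $H_{pN}$. The starting observation is that $H_p = H_{p-1}+1/p$, so that $pH_p - 1 = pH_{p-1}$. Combined with the identity from the proof of Lemma~\ref{lem:congH}, namely
$$pH_{pN}-H_N = p\sum_{r=1}^{N}\sum_{\ep=1}^{p-1}\frac{1}{rp-p+\ep},$$
whose $r=1$ inner sum is precisely $pH_{p-1}$, this yields
$$p(H_{pN}-H_p) - (H_N-1) \;=\; (pH_{pN}-H_N) - pH_{p-1} \;=\; p\sum_{r=2}^{N}\sum_{\ep=1}^{p-1}\frac{1}{rp-p+\ep}.$$
So the question reduces to evaluating the double sum on the right modulo $p^3$.

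Shifting the summation index $r \mapsto r+1$ and expanding $(rp+\ep)^{-1}$ by geometric series up to the $p^2$-term exactly as in \eqref{eq:p3a} gives
$$\sum_{r=1}^{N-1}\sum_{\ep=1}^{p-1}(rp+\ep)^{-1} \equiv (N-1)H_{p-1} - p\binom{N}{2}H_{p-1}^{(2)} + p^2\,\tfrac{(N-1)N(2N-1)}{6}H_{p-1}^{(3)} \mod p^3.$$
The last term drops since $v_p(H_{p-1}^{(3)})\ge 1$, and applying \eqref{eq:HnHn2} (which gives $H_{p-1}^{(2)}\equiv -2H_{p-1}/p \mod p^2$) converts the middle term into $N(N-1)H_{p-1}$. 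Summing we get
$$\sum_{r=1}^{N-1}\sum_{\ep=1}^{p-1}(rp+\ep)^{-1} \equiv (N-1)H_{p-1} + N(N-1)H_{p-1} = (N^2-1)H_{p-1} \mod p^3\mathbb Z_p.$$

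Multiplying through by $p$ yields the exact identity
$$p(H_{pN}-H_p) - (H_N-1) \equiv p(N^2-1)H_{p-1} \mod p^4\mathbb Z_p.$$
The right-hand side belongs to $p^4\mathbb Z_p$ if and only if either $v_p(H_{p-1})\ge 3$, that is, $p$ is a Wolstenholme prime, or $p\mid (N^2-1)$, that is, $N\equiv\pm1\bmod p$. This proves both directions of the equivalence in the lemma.

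There is no genuinely new difficulty in carrying this out; the one point needing care is that the "if and only if" statement requires the error terms discarded in the expansion (the $H_{p-1}^{(3)}$-contribution, and the $H_{p-1}^{(3)}$-contribution hidden in the relation \eqref{eq:HnHn2}) to be controlled strictly modulo $p^3$, not merely divisible by $p^2$. Since this sharper control is already provided by the proof of Lemma~\ref{lem:congH}, no extra work is needed, and the above chain of congruences is tight.
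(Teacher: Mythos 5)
Your proof is correct and follows essentially the same route as the paper. The paper simply cites the intermediate congruence $pH_{pN}-H_N\equiv pN^2H_{p-1}\pmod{p^4\mathbb Z_p}$ obtained in the proof of Lemma~\ref{lem:congH}, subtracts $pH_{p-1}=pH_p-1$ from both sides, and is done in two lines; you instead strip the $r=1$ term from the double sum and re-run the geometric-series expansion and the $H_{p-1}\leftrightarrow H_{p-1}^{(2)}$ reduction to reach the identical congruence $p(H_{pN}-H_p)-(H_N-1)\equiv p(N^2-1)H_{p-1}\pmod{p^4\mathbb Z_p}$. The computation and the use of \eqref{eq:p3a} and \eqref{eq:HnHn2} are the same; the only difference is whether one reuses the boxed intermediate result or re-derives it.
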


\begin{proof}
From the proof of Lemma~\ref{lem:congH}, we know that 
$$
pH_{pN}-H_N\equiv pN^2H_{p-1}\mod \mathbb Z/p^4\mathbb Z.$$
As a consequence, we obtain
$$
p(H_{pN}-H_p)-(H_N-1)\equiv p(N^2-1)H_{p-1}\mod \mathbb Z/p^4\mathbb Z.$$
The assertion of the lemma follows now immediately.
\end{proof}

Finally, the computation for $m=1$ in Section~\ref{sec:6}
must be replaced by
\begin{align*}
B_{\mathbf N}(1)(H_{Np^s}-H_{p^s})
&=N!^k\Bigg(\frac {1} {p^s}(H_N-1)+
\frac {1} {p^{s-1}}
\underset{p\nmid\ep}{\sum _{\ep=p+1} ^{Np}}\frac {1} {\ep}+
\frac {1} {p^{s-2}}
\underset{p\nmid\ep}{\sum _{\ep=p^2+1} ^{Np^2}}\frac {1} {\ep}+
\underset{p^{s-2}\nmid \ep}{\sum _{\ep=p^s+1} ^{Np^s}}\frac {1}
{\ep}\Bigg),
\end{align*}
with the conclusion that
$$
B_{\mathbf N}(1)(H_{Np^s}-H_{p^s})\in p^{-s} N!^k\Om_N\mathbb Z_p
$$
being found in a completely analogous manner.

Altogether, this leads to a proof of Theorem~\ref{thm:3a}.

\section{The Dwork--Kontsevich sequence}
\label{sec:DK}

In this section, we address the question
of optimality of Theorem~\ref{thm:3a} when $k=1$, that is, whether,
given that $k=1$, the largest integer $u_N$ such that
$\big(z^{-1}q_{(N)}(z)\big)^{\frac{1}{Nu_N}} \in\mathbb{Z}[[z]]$ 
is given by $\Om_N N!$. 
Let us write $\widetilde q_{(N)}(z)$ for $\big(z^{-1}q_{(N)}(z)\big)^{1/N}$
with $k$ {\it being fixed to $1$.}
The first proposition shows
that there cannot be any prime number $p$ larger than
$N$ which divides $u_N$. We omit the proof since it is
completely analogous to the proof of Proposition~\ref{prop:p>N}
in Section~\ref{sec:DworkKont}.

\begin{prop} \label{prop:p>N2}
Let $p$ be a prime number and $N$ a positive integer with $p>N\ge2$.
Then there exists a positive integer $a<p$ such that
\begin{equation} \label{eq:p>N2} 
v_p\big(B_{N}(a)(H_{Na}-H_a)\big)=0.
\end{equation}
In particular, $p$ does not divide $u_N$. 
\end{prop}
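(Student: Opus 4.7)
The plan is to mirror the proof of Proposition~\ref{prop:p>N} almost verbatim, with the single adjustment that one must track $v_p(H_{Na}-H_a)$ instead of $v_p(H_{Na})$. Since $p>N\ge2$ forces $p\ge3$, I would take $a:=\lceil p/N\rceil$, the least positive integer with $Na\ge p$. Then $2\le a\le p/N+1<p$, so $a$ is admissible, and the minimality of $a$ yields $p\le Na<p+N<2p$.

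Two short computations then give \eqref{eq:p>N2}. First, Legendre's formula
\[
v_p(B_N(a))=\sum_{\ell\ge 1}\left(\fl{Na/p^\ell}-N\fl{a/p^\ell}\right)
\]
collapses to $v_p(B_N(a))=1$: only the $\ell=1$ term contributes, since $Na<2p$ and $a<p$ kill all higher floors, while $\fl{Na/p}-N\fl{a/p}=1-0=1$. Second, writing
\[
H_{Na}-H_a=\sum_{i=a+1}^{Na}\frac{1}{i},
\]
the range of summation lies in $(a,2p)$ with $a<p$, so it contains exactly one multiple of $p$, namely $p$ itself. Thus the sum equals $1/p$ plus an element of $\mathbb Z_p$, whence $v_p(H_{Na}-H_a)=-1$. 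Adding the two valuations yields $v_p\bigl(B_N(a)(H_{Na}-H_a)\bigr)=0$, as desired.

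For the concluding assertion, I would specialise the definition \eqref{eq:Summe} of $\widetilde C$ to $K=0$: only the $j=0$ term survives, leaving $\widetilde C(a)=-pB_N(a)(H_{Na}-H_a)$, which therefore has $p$-adic valuation exactly $1$. Since $p>N$ gives $v_p(\Om_N N!)=0$, we obtain $\widetilde C(a)\notin p^2\Om_N N!\,\mathbb Z_p$, and Lemma~\ref{lem:4} (applied in the same way as at the end of Section~\ref{sec:DworkKont}) then precludes raising the exponent of $p$ beyond $v_p(\Om_N N!)=0$ in the $\tau$ appearing in Theorem~\ref{thm:3a}; equivalently, $p$ does not divide $u_N$. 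There is essentially no conceptual obstacle here: the only point requiring care is to observe that subtracting $H_a$ does not cancel the lone $1/p$ contribution, which is automatic because $a<p$ implies $H_a\in\mathbb Z_p$.
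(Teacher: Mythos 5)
Your argument is correct and is exactly the ``completely analogous'' adaptation of the proof of Proposition~\ref{prop:p>N} that the paper refers to: you take the same $a=\lceil p/N\rceil$, verify $v_p(B_N(a))=1$ and $v_p(H_{Na}-H_a)=-1$ by the identical floor/Legendre computation, and derive the non-divisibility of $u_N$ from $\widetilde C(a)=-pB_N(a)(H_{Na}-H_a)$ via Lemma~\ref{lem:4}. In fact you supply more detail than the paper (which writes only $1-1=0$), correctly noting that $p\le Na<2p$ and $a<p$ kill all floors with $\ell\ge 2$ and that $p$ is the unique multiple of $p$ in $\{a+1,\dots,Na\}$, so $H_a\in\mathbb Z_p$ cannot cancel the $1/p$.
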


So, if we hope to improve Theorem~\ref{thm:3a} with $k=1$, then it must be by
increasing exponents of prime numbers $p\le N$ in \eqref{eq:Om}. 
According to Remarks~\ref{rem:Om}(b) in the Introduction, an
improvement is therefore only possible if 
$v_p(H_N-1)>2$ for some $p\le N$. 

The final result of this section shows that, if $v_p(H_N-1)=3$
(for which, however, so far no examples are known; see
Remarks~\ref{rem:Om}(c)), the exponent
of $p$ in the definition of $\Om_N$ in \eqref{eq:Om} cannot be increased
so that Theorem~\ref{thm:3a} with $k=1$ would still hold. 

\begin{prop} \label{prop:vp=32}
Let $p$ be a prime number and $N$ a positive integer with $p\le N$
and $v_p(H_N)=3$. If $p$ is not a Wolstenholme prime and 
$N\hbox{${}\not\equiv{}$}\pm1$~{\em mod}~$p$, then
$\widetilde q_{(N)}(z)^{\frac{1}{p\Om_{N}N!}}
\notin\mathbb{Z}[[z]]$.
\end{prop}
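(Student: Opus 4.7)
The plan is to mimic the proof of Proposition~\ref{prop:vp=3}, with $H_N$ and $C(p)$ replaced by $H_N-1$ and $\widetilde C(p)$, and with Lemma~\ref{lem:congH} replaced by Lemma~\ref{lem:congH2}. (We read the hypothesis as $v_p(H_N-1)=3$, by analogy with Remarks~\ref{rem:Om}(c) and Proposition~\ref{prop:vp=3}.) By Lemma~\ref{lem:4} applied to $F_{(N)}(z)$, $\widetilde G_{(N)}(z)$ and $\tau=p\Om_N N!$, it suffices to exhibit a single Taylor coefficient $\widetilde C(M)$ of
$$F_{(N)}(z)\widetilde G_{(N)}(z^p)-pF_{(N)}(z^p)\widetilde G_{(N)}(z)$$
that fails to lie in $p^2\Om_NN!\,\mathbb Z_p=p^4N!\,\mathbb Z_p$, the last equality since $v_p(\Om_N)=2$ under our assumptions.

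First, I would pick $M=p$, i.e.\ $a=0$, $K=1$. Going back to \eqref{eq:Summe} with $k=1$, only the contributions $j=0$ and $j=1$ survive (all other $B_N(\cdot)$ factors are non-existent or trivially zero), and a direct evaluation yields
$$\widetilde C(p)=B_N(1)(H_N-1)-pB_N(p)(H_{Np}-H_p).$$
The key rewriting is
$$\widetilde C(p)=(H_N-1)\bigl(B_N(1)-B_N(p)\bigr)+B_N(p)\bigl[(H_N-1)-p(H_{Np}-H_p)\bigr],$$
and the objective is to show that the first summand lies in $p^4N!\,\mathbb Z_p$ while the second does not.

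For the first summand, I would run the Wolstenholme-style expansion
$$B_N(p)=\frac{N!}{((p-1)!)^N}\prod_{j=0}^{N-1}\prod_{\ell=1}^{p-1}(jp+\ell)$$
and note that, thanks to Wolstenholme's theorem ($v_p(H_{p-1})\ge2$ and $v_p(H_{p-1}^{(2)})\ge1$ for $p\ge5$), one has
$$\prod_{\ell=1}^{p-1}(jp+\ell)\equiv (p-1)!\bigl(1+jpH_{p-1}\bigr)\pmod{p^3},$$
from which $B_N(1)-B_N(p)\equiv -\binom{N}{2}pH_{p-1}N!\pmod{p^3N!\,\mathbb Z_p}$. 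In particular $v_p(B_N(1)-B_N(p))\ge 3+v_p(N!)$, so the first summand has $v_p\ge 3+3+v_p(N!)\ge4+v_p(N!)$ and is in $p^4N!\,\mathbb Z_p$.

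For the second summand, I would invoke the computation inside the proof of Lemma~\ref{lem:congH2}, which yields
$$p(H_{Np}-H_p)-(H_N-1)\equiv p(N^2-1)H_{p-1}\pmod{p^4\mathbb Z_p}.$$
Since $p$ is not a Wolstenholme prime we have $v_p(H_{p-1})=2$ exactly, and since $N\not\equiv\pm1\pmod p$ we have $v_p(N^2-1)=0$; Lemmas~\ref{lem:H_L2}, \ref{lem:32}, \ref{lem:52} can also be used beforehand to reduce to $p\ge 7$. Together with $v_p(B_N(p))=v_p(N!)$ this forces the second summand to have $p$-adic valuation exactly $3+v_p(N!)$. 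Hence $v_p(\widetilde C(p))=3+v_p(N!)<4+v_p(N!)$, i.e.\ $\widetilde C(p)\notin p^2\Om_NN!\,\mathbb Z_p$, and Lemma~\ref{lem:4} concludes.

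The only non-routine step is the $\pmod{p^3}$ expansion of $B_N(1)-B_N(p)$: it is the same Wolstenholme bookkeeping that was already the heart of the proof of Proposition~\ref{prop:vp=3}, and I do not foresee any new difficulty.
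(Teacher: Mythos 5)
Your proof is correct and faithfully fills in the analogy that the paper only gestures at (the paper omits the proof, stating merely that it is ``completely analogous to the proof of Proposition~\ref{prop:vp=3}''). You correctly read the hypothesis as $v_p(H_N-1)=3$ (the ``$v_p(H_N)=3$'' in the statement is a typo, as the surrounding discussion makes clear), you correctly identify $\widetilde C(p)$, decompose it in the same way as in Proposition~\ref{prop:vp=3}, and replace Lemma~\ref{lem:congH} by Lemma~\ref{lem:congH2}.

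One small remark: for the first summand you derive $v_p(B_N(1)-B_N(p))\ge 3+v_p(N!)$ via a $\pmod{p^3}$ Wolstenholme-style expansion. This is correct but stronger than needed and stronger than what the paper's proof of Proposition~\ref{prop:vp=3} uses: there, Wilson's theorem alone gives $v_p(B_N(1)-B_N(p))\ge 1+v_p(N!)$, which together with $v_p(H_N-1)=3$ already yields the required $\ge 4+v_p(N!)$. Using the weaker bound would have kept the proof closer to the template and avoided the extra bookkeeping you flag as ``the only non-routine step.'' Everything else --- the reduction via Lemma~\ref{lem:4} to a failure of $\widetilde C(p)\in p^4N!\,\mathbb Z_p$, the identity $v_p(\Om_N)=2$ under the stated hypotheses, the exact valuation $3+v_p(N!)$ of the second summand coming from $v_p(H_{p-1})=2$ and $v_p(N^2-1)=0$, and the preliminary reduction to $p\ge7$ via Lemmas~\ref{lem:H_L2}--\ref{lem:52} --- matches the intended argument.
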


Again, the proof is completely analogous to the proof of 
Proposition~\ref{prop:vp=3} in Section~\ref{sec:DworkKont}, 
which we therefore omit.

So, if the conjecture in
Remarks~\ref{rem:Om}(c) that no prime $p$ and integer $N$ exists
with\break
$v_p(H_N-1)\ge4$ should be true, then Theorem~\ref{thm:3a} with $k=1$ is
optimal, that is, the Dwork--Kontsevich sequence $(u_N)_{N\ge1}$ is given by
$u_N=\Om_NN!$.

\section*{Acknowledgements}
The authors are extremely grateful to Alessio Corti 
and Catriona Maclean for illuminating discussions 
concerning the geometric side of our work, 
and to David Boyd for helpful information on
the $p$-adic behaviour of the harmonic numbers $H_N$ 
and for communicating to us the value \eqref{eq:boyd} from his files
from 1994. They also thank the referees for 
an extremely careful reading of the original manuscript.

\def\refname{Bibliography}

\end{document}